\newcommand{\vt}{(V_\phi)}
\newcommand{\cphi}{\widetilde{\phi}}
\newcommand{\mG}{\mathcal{G}}
\newcommand{\pt}{(P_{\phi})}
\newcommand{\cmG}{\widetilde{\mathcal{G}}}
\newcommand{\bR}{\mathbb{R}}
\newcommand{\cg}{\widetilde{g}}
\newcommand{\manlam}{(M^n,g,\phi,\lambda)}
\newcommand{\subba}{(4\pi\tau)^{-n/2}e^{-\phi}\mathrm{dvol}_g}
\newcommand{\amb}{(\widetilde{\mathcal{G}},\widetilde{g},\widetilde{\phi},\lambda)}
\newcommand{\man}{(M^n,g,\phi,\lambda)}
\newcommand\restr[2]{{
  \left.\kern-\nulldelimiterspace 
  #1 
  \vphantom{\big|} 
  \right|_{#2} 
  }}
\def\sideremark#1{\ifvmode\leavevmode\fi\vadjust{\vbox to0pt{\vss
 \hbox to 0pt{\hskip\hsize\hskip1em
 \vbox{\hsize3cm\tiny\raggedright\pretolerance10000
 \noindent #1\hfill}\hss}\vbox to8pt{\vfil}\vss}}}
\newtheorem{theorem}{Theorem}[section]
\newtheorem{lemma}[theorem]{Lemma}
\newtheorem{corollary}[theorem]{Corollary}
\theoremstyle{definition}
\newtheorem{definition}[theorem]{Definition}
\newtheorem*{example}{Example}
\theoremstyle{remark}
\numberwithin{equation}{section}
\title{The weighted ambient metric for manifolds with density}
\author{Ayush Khaitan}
\address{Department of Mathematics \\ Rutgers University \\ New Brunswick, NJ 08854 \\ USA}
\email{ayush.khaitan@rutgers.edu}
\begin{document}
\keywords{ambient metric; manifold with density; renormalized volume coefficient; GJMS operator, $\mathcal{F}$--functional, $\mathcal{W}$--functional, Ricci flow, Kleiner-Lott spacetime, singular Ricci flow}
\subjclass[2020]{Primary 53C18; Secondary 53A31, 58E30}

\maketitle
\begin{abstract}
We prove the existence and uniqueness of a weighted analogue of the Fefferman-Graham ambient metric for manifolds with density.  We then show that this ambient metric forms the natural geometric framework for the Ricci flow by constructing infinite families of fully non-linear analogues of Perelman's $\mathcal{F}$ and $\mathcal{W}$ functionals. We extend Perelman's monotonicity result to these two families of functionals under several conditions, including for shrinking solitons and Einstein manifolds. We do so by constructing a ``Ricci flow vector field" in the ambient space, which may be of independent research interest. We also prove that the weighted GJMS operators associated with the weighted ambient metric are formally self-adjoint,  and that the associated weighted renormalized volume coefficients are variational.
\end{abstract}

\section{Introduction}
The Fefferman--Graham ambient space~\cite{FeffermanGraham2012} is a formally Ricci flat space canonically associated to a given conformal manifold.
Among its many applications are the classification of local scalar conformal invariants~\cites{BaileyEastwoodGraham1994,Gover2001} and the construction of a family of conformally covariant operators, called GJMS operators, with leading-order term a power of the Laplacian~\cite{GJMS1992}.

A \emph{manifold with density} $\man$ is a Riemannian manifold with a smooth measure $e^{-\phi}\mathrm{dvol}_g$, and may be thought of as a smooth metric measure space with $m=\infty$. Among its applications is the realization of the $\mathcal{F}$ and $\mathcal{W}$ gradient Ricci flows~\cite{Perelman1}. A manifold with density $\man$ is a \emph{gradient Ricci soliton} if $\mathrm{Ric}+\nabla^2\phi=\lambda g$. Here $\lambda$ is a parameter that equals $1/2\tau$, where $\tau=T-t$ may be thought of as ``backwards time".  

Chang, Gursky and Yang~\cite{ChangGurskyYang2006} started a program to construct conformal invariants associated to a manifold with density. Case extended this program~\cites{Case2016v,Case2014sd} by constructing two higher fully non-linear analogues of the $\mathcal{W}$-functional, $\mathcal{W}_{2,\phi}$ and $\mathcal{W}_{3,\phi}$, that are also conformally invariant, and proving that they are variational. He mentions~\cite{Case2016v} that a systematic way to construct these invariants would be by constructing the ambient metric for manifolds with density. Case and the author tried to construct the ambient metric for a manifold with density by constructing one for $m<\infty$~\cite{CaseKhaitan2022}, and then attempting to take a limit as $m\to\infty$. However, their attempt was unsuccessful, as the geometric quantities that they construct are not always well-defined when $m=\infty$ (one example being the volume element $f^m \mathrm{dvol}_{g_\rho}$).

In this paper, we finally construct the ambient metric for manifolds with density, thereby bringing this program to a completion. This allows us to systematically construct infinite families of fully non-linear analogues of both the $\mathcal{F}$ and $\mathcal{W}$ functionals. We prove that these families are variational, and also construct the associated weighted GJMS operators, and weighted renormalized volume coefficients. 

Moreover, we establish a dual correspondence between the weighted ambient metric and the singular Ricci flow by proving that given a singular gradient Ricci flow spacetime $\mathcal{H}$ in the Kleiner-Lott sense~\cites{KleinerLott2017}, we can construct a unique global ambient half-space structure on $\mathcal{H}\times \mathbb{R}$. We also prove that every global ambient space contains a singular Ricci flow spacetime, thereby completing the correspondence between the two. 

The main application of our results is  the construction of infinite families of fully non-linear analogues of Perelman's $\mathcal{F}$--functional and $\mathcal{W}$--functional. We also prove that, under suitable conditions, these families of functionals are monotone increasing or decreasing under the singular Ricci flow. We prove that the renormalized volume coefficients, which are the integrands of these fully non-linear analogues of Perelman's functionals, are variational, and prove that shrinking solitons are stable points of the $\mathcal{W}$ functional in a conformal class.

Our constructions require us to construct a vector field in the weighted ambient space, along which we have a solution to the gradient Ricci flow. By the use of this vector field, we are also able to provide short proofs of the monotonicity of Perelman's $\mathcal{F}$ and $\mathcal{W}$ functionals that, to our knowledge, are not present in the literature. See \cref{f-increasing,w-increasing} for more details. This vector field may be of independent research interest.

Before stating our theorems, we recall some well-known weighted invariants of a manifold with density $\man$~\cite{Case2014sd}:\begin{align*}
\mathrm{Ric}_\phi&=\mathrm{Ric}+\nabla^2\phi,\\
R_{\phi}&=R+2\Delta\phi-|\nabla\phi|^2+2\lambda(\phi-n)
.
\end{align*}
Also, consider the following weighted invariants that will be useful in this paper~\cites{Case2014s,CaseChang2013}
\begin{equation}
\label{basic-definitions-2}
\begin{aligned}
P_{\phi}&=\mathrm{Ric}_\phi-\lambda g,\\
F_\phi&=\Delta\phi-|\nabla\phi|^2+2\lambda \phi-n\lambda,\\
Y_{\phi}&=-\frac{1}{2}[R+|\nabla\phi|^2-2\lambda\phi].
\end{aligned}
\end{equation}
We also require the weighted Bach tensor, which is defined as~\cites{Case2014sd,Case2016v}
\begin{equation*}
\label{weighted-bach}
{B}_\phi=\delta_\phi d {P}_\phi+\mathrm{Rm} \cdot {P}_\phi.
\end{equation*}
where
\begin{equation*}
\begin{aligned}
& \left(\delta_\phi d {P}_\phi\right)(X, Y):=\sum_{j=1}^n \nabla_{E_i} d {P}_\phi\left(E_i, X, Y\right)-d {P}_\phi(\nabla \phi, X, Y), \\
& \left(\operatorname{Rm} \cdot {P}_\phi\right)(X, Y):=\sum_{j=1}^n \operatorname{Rm}\left(E_i, X, E_j, Y\right) {P}_\phi\left(E_i, E_j\right). \\
&
\end{aligned}
\end{equation*}
More precise definitions of all these terms are given in \cref{sec:manifolds-with-density,sec:ambient-metric}. \\

A \emph{gradient Ricci soliton} is a manifold with density $\manlam$ that satisfies $P_\phi=0$. If $M$ is connected, as we will always assume, then there is a constant $c\in\mathbb{R}$ such that $F_\phi=c$. Putting these together, we get
\begin{equation*}
\begin{aligned}
P_\phi&=0,\\
F_\phi&=c.    
\end{aligned}    
\end{equation*}
Given a metric measure structure $(g,\phi)$, the weighted conformal class $\mathcal{C}$ of $(g,\phi)$ is defined as 
\begin{equation}
\label{conformal-class}
\mathcal{C}:=\{\phi+\omega\mid \omega\in C^\infty(M)\}.
\end{equation} We denote the weighted conformal class of $(g,\phi)$ as $[\phi]$. Let $\mathcal{G}$ consist of all pairs $(\psi,x)$, where $x\in M$, and $\psi\in C^\infty(M)$ such that $\psi=\phi+\omega$ for some $\omega\in C^\infty(M)$. For $s\in \mathbb{R}$, we define the dilation $\delta_s:\mathcal{G}\to \mathcal{G}$ as $(\psi,x)\to (\psi+s,x)$, and the projection map $\pi:\mathcal{G}\to M$ as $(\psi,x)\to x$. Let $\widetilde{\mathcal{G}}$ denote the space $\mathcal{G}\times (-\epsilon,\epsilon)$. We embed $\mathcal{G}\to \widetilde{\mathcal{G}}$ by $\iota: z\to (z,0)$. Roughly speaking, a weighted ambient space for $(M^n,g,[\phi],\lambda)$ is a manifold with density $(\mathbb{R}\times M^n\times (-\epsilon,\epsilon),\widetilde{g},\widetilde{\phi},\lambda)$ of Lorentzian signature such that
\begin{align*}
\restr{\widetilde{P}_\phi}{u=0}&=0\;\bmod\;O(u^\infty),\\
\restr{\widetilde{F}_\phi}{u=0}&=0\;\bmod\;O(u^\infty),
\end{align*}
where $u$ denotes the coordinate on $(-\epsilon,\epsilon)$. 

Pre-ambient spaces $(\cmG_1,\widetilde{g},\widetilde{\phi},\lambda_1)$ and $(\cmG_2,\widetilde{g}_2,\widetilde{\phi}_2,\lambda_2)$ are considered to be \emph{ambient-equivalent} if there exists a diffeomorphism $\psi:\cmG_1\to \cmG_2$ which commutes with dilations, satisfies $\restr{\psi}{\mG}=\mathrm{id}$, and is such that $(\widetilde{g}_1-\psi^*\widetilde{g}_2,\widetilde{\phi}_1-\psi^*\widetilde{\phi}_2)=O(u^\infty)$.

Given a weighted conformal class of manifolds with density, we show that there exists a weighted ambient metric for manifolds with density that is unique up to ambient-equivalence. 
\begin{theorem}
\label{ambient-metric-theorem}
Given a weighted conformal class of manifolds with density $(M^n,g,[\phi],\lambda)$, there exists a unique, up to ambient-equivalence, weighted ambient space $\amb$ for it.
\end{theorem}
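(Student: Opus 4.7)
The plan is to adapt the Fefferman-Graham procedure to the weighted setting: fix a normal form using the diffeomorphism freedom that commutes with dilations and restricts to the identity on $\mathcal{G}$, reduce the conditions $\widetilde{P}_\phi=0$ and $\widetilde{F}_\phi=0$ modulo $O(u^\infty)$ to a recursive system for the Taylor coefficients of $\cg$ and $\cphi$ in $u$, and solve it order by order.

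First, I would introduce local coordinates $(\psi,x^i,u)$ on $\cmG$ compatible with $\pi$, $\delta_s$, and $\iota$, so that $\delta_s$ acts by $\psi\mapsto\psi+s$ and $\iota(\mathcal{G})=\{u=0\}$. Using the remaining diffeomorphism gauge, I would put $(\cg,\cphi)$ in a weighted analogue of the Fefferman-Graham normal form, specifying the $d\psi\,du$, $du^2$, and $d\psi^2$ components of $\cg$ and the leading behaviour of $\cphi$ along $u$. In such coordinates $\cg$ reduces to a one-parameter family of metrics $g_u$ on $M$ with $g_0=g$, and $\cphi$ to a one-parameter family of functions $\phi_u$ with $\phi_0$ a prescribed lift of $\phi$.

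Having fixed the normal form, I would expand $\widetilde{P}_\phi$ and $\widetilde{F}_\phi$ in powers of $u$ near $u=0$. The conditions $\widetilde{P}_\phi=O(u^\infty)$ and $\widetilde{F}_\phi=O(u^\infty)$ produce, at each order $k\geq 1$, algebraic equations of the form
\begin{equation*}
\mathcal{L}_k\bigl(g_u^{(k)},\phi_u^{(k)}\bigr) = \mathcal{R}_k\bigl(\{(g_u^{(j)},\phi_u^{(j)})\}_{j<k}\bigr),
\end{equation*}
where $g_u^{(k)}$, $\phi_u^{(k)}$ denote the $k$-th Taylor coefficients at $u=0$, and $\mathcal{L}_k$ is a linear operator acting on pairs (symmetric $2$-tensor, scalar) on $M$. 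Inspection of the structure of $\widetilde{P}_\phi$ and $\widetilde{F}_\phi$ in normal form should show that $\mathcal{L}_k$ is invertible for every $k$: the trace-free part of $\widetilde{P}_\phi$ fixes the trace-free part of $g_u^{(k)}$, while $\widetilde{F}_\phi$ together with the trace of $\widetilde{P}_\phi$ determines the trace of $g_u^{(k)}$ and $\phi_u^{(k)}$. An induction on $k$ then yields a unique formal power series solution, which by Borel's theorem can be realized as a smooth weighted ambient structure.

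Uniqueness is handled in parallel: given two weighted ambient spaces for the same data, I would construct the required diffeomorphism $\Psi\colon\cmG_1\to\cmG_2$ inductively in $u$ by arranging, order by order, for $\Psi^*\cg_2$ and $\Psi^*\cphi_2$ to agree with $\cg_1$ and $\cphi_1$ in the chosen normal form. Since the indicial equations above have unique solutions, this forces $(\cg_1-\Psi^*\cg_2,\cphi_1-\Psi^*\cphi_2)=O(u^\infty)$. The main obstacle will be verifying the invertibility of $\mathcal{L}_k$ at every order and checking the consistency between the tensor equation $\widetilde{P}_\phi=0$ and the scalar equation $\widetilde{F}_\phi=0$: in the classical Fefferman-Graham setting this is where the even-dimensional obstruction appears, whereas here the additional scalar unknown $\phi_u^{(k)}$ and equation $\widetilde{F}_\phi=0$ should supply precisely the extra algebraic freedom needed to avoid that obstruction and close the induction for all $n$.
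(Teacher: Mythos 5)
Your overall strategy matches the paper's: reduce to a normal form (Lemma~\ref{pullback-normal}), expand $\widetilde{P}_\phi$ and $\widetilde{F}_\phi$ in $u$ and solve order by order (Theorem~\ref{normal-ambient-metric-exists}), then deduce uniqueness because the normalizing diffeomorphism is forced (Lemma~\ref{any-two-ambient-metrics}). However, the mechanism you propose for why the classical even-dimensional obstruction disappears is not the one that actually operates. You write that the extra scalar unknown $\phi_u^{(k)}$ and extra scalar equation $\widetilde{F}_\phi=0$ ``supply precisely the extra algebraic freedom needed to avoid that obstruction.'' But the linear system at order $m$ is effectively triangular: $\phi^{(m)}$ enters $\widetilde{F}_\phi$ and $(\widetilde{P}_\phi)_{\infty\infty}$ at the relevant order but does \emph{not} appear in $(\widetilde{P}_\phi)_{ij}$ at order $u^{m-1}$ (its contribution through $\widetilde{\nabla}^2\widetilde{\phi}$ is $O(u^m)$), so adding it as an unknown cannot rescue the tangential equation if that equation were to degenerate. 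The reason the induction closes for all $n$ is instead visible in the paper's iterative formula~\eqref{iterative-formulas-ijf}: the coefficient of $\psi_{ij}^{(m)}$ in $(\widetilde{P}_\phi)_{ij}$ is $-\tfrac{1}{2}m$, with no dependence on $n$ at all, so the tangential equation is invertible at every order $m\ge 1$. This nondegeneracy is a structural feature of the Bakry--\'Emery Ricci tensor together with the linear-in-$v$ piece of $\widetilde{\phi}$; it is the ``$m=\infty$'' phenomenon, not a balancing of equations and unknowns. Once $\psi^{(m)}_{ij}$ is fixed, $\widetilde{F}_\phi$ determines $\phi^{(m)}$ separately.

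Two further points you should not omit in carrying out the plan. First, once $(\widetilde{P}_\phi)_{ij}$ and $\widetilde{F}_\phi$ have been killed to all orders, the remaining components $(\widetilde{P}_\phi)_{0\infty}, (\widetilde{P}_\phi)_{i\infty}, (\widetilde{P}_\phi)_{\infty\infty}$ are \emph{not} free to prescribe: the paper shows they vanish automatically as a consequence of the weighted Bianchi identity~\eqref{weighted bianchi} applied inductively (see~\eqref{weightedbianchiequations}). Your ``checking consistency'' gestures at this but it needs to be made explicit, since these components are part of $\widetilde{P}_\phi=O(u^\infty)$ and there are no remaining gauge or Taylor-coefficient unknowns at those orders. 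Second, the normal form here includes a ``straight to first order'' condition (Definition~\ref{preambient-defintion}\ref{straight-definition}) that is used, via Lemma~\ref{straighta=0}, to pin down $\widetilde{g}_{00}$ and $\widetilde{g}_{0i}$ to $O(u^\infty)$; without this the $00$ and $0i$ components of $\widetilde{P}_\phi$ do not by themselves force those metric entries to vanish, and the normalization would not be unique.
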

From this point onwards, we will assume that the metric measure structure $(\widetilde{g},\widetilde{\phi})$ is of the form 
\begin{equation}
\label{normalambientmetric}
\begin{aligned}
\widetilde{g}=2dudv+g_{u}, \quad \widetilde{\phi}=\phi_{u}-v+\lambda uv,
\end{aligned}   
\end{equation}
where $v$ is the coordinate on $\mathbb{R}$, and $(g_u,\phi_u)$ is a $1$-parameter family of metric measure structures such that $\restr{(g_u,\phi_u)}{u=0}=(g,\phi)$. In this paper, we refer to the $v$ coordinate as the $0$ coordinate, the $u$ coordinate as the $\infty$ coordinate, and the coordinates on the manifold $M$ as lower case Roman letter $i,j,k$ (c.f.~\cites{CaseKhaitan2022,FeffermanGraham2012}).
Metrics of the form of \cref{normalambientmetric} are called \emph{normal}, and all other ambient metrics are ambient-equivalent to this metric. See \cref{sec:ambient-metric} for more details. 

We can explicitly identify the weighted ambient metric when the underlying manifold with density is a gradient Ricci soliton, or $g$ is flat.
\begin{theorem}
 \label{examples-theorem}
Let $\man$ be manifold with density such that 
\begin{enumerate}[label=(\roman*)]
\item \label{Theorem 1.2-1} $\manlam$ is a gradient Ricci soliton; or

\item \label{Theorem 1.2-2} $g$ is flat, or equivalently that $\manlam$ is locally conformally flat.
\end{enumerate}
Set 
\begin{equation}
\label{quadratic-expression}
\begin{aligned}
\left(g_{u}\right)_{i j} &=g_{i j}+2u(P_{\phi})_{i j} +u^{2}(P_{\phi})_{i k}(P_{\phi})_{j}^{k} , \\
\phi_{u} &=\phi- u Y_{\phi}.
\end{aligned}
\end{equation}
Then $(\widetilde{\mathcal{G}},\widetilde{g},\widetilde{\phi},\lambda)$ with $(\widetilde{g},\widetilde{\phi})$ of the form of \cref{normalambientmetric} is a weighted ambient space for $\man$.
\end{theorem}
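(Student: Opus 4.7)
The approach is direct verification. Given the ansatz (1.4), we substitute into the normal form (1.3) to obtain explicit expressions for $\widetilde{g}$ and $\widetilde{\phi}$, then compute $\widetilde{P}_\phi$ and $\widetilde{F}_\phi$ component-by-component and verify that they vanish at $u=0$ to infinite order. Because $g_u$ is a polynomial of degree two in $u$ and $\phi_u$ is linear in $u$ in both cases, the expressions to check are themselves polynomial in $u$, so verifying vanishing at $u=0$ plus vanishing of finitely many $u$-derivatives will suffice. As preliminary bookkeeping, observe that $\widetilde{g}$ has no $v$-dependence, so the only nonzero Christoffels are $\widetilde{\Gamma}^k_{ij} = (\Gamma_{g_u})^k_{ij}$, $\widetilde{\Gamma}^v_{ij} = -\tfrac{1}{2}\partial_u(g_u)_{ij}$, and $\widetilde{\Gamma}^i_{uj} = \tfrac{1}{2}g_u^{ik}\partial_u(g_u)_{jk}$; in particular $\widetilde{\Gamma}^u_{\bullet\bullet}\equiv 0$ and $\widetilde{\Gamma}^v_{u\bullet},\widetilde{\Gamma}^v_{v\bullet},\widetilde{\Gamma}^i_{v\bullet}\equiv 0$.

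For case \ref{Theorem 1.2-1}, the identity $P_\phi=0$ collapses the ansatz to $g_u=g$ and $\phi_u=\phi-uY_\phi$, so $\widetilde{g}=2dudv+g$ is effectively a product metric. The key input is Hamilton's identity for gradient Ricci solitons: $R+|\nabla\phi|^2-2\lambda\phi$ is constant, hence $Y_\phi$ is constant and $\nabla Y_\phi=0$. A direct Hessian computation then gives $\widetilde{\nabla}^2_{ij}\widetilde{\phi}=\nabla^2_{ij}\phi$, $\widetilde{\nabla}^2_{uv}\widetilde{\phi}=\lambda$, $\widetilde{\nabla}^2_{ui}\widetilde{\phi}=-\partial_iY_\phi=0$, with other components zero, so $\widetilde{\mathrm{Ric}}_\phi-\lambda\widetilde{g}$ vanishes \emph{identically} (not merely to infinite order). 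For $\widetilde{F}_\phi$, tracing the soliton equation gives $\Delta\phi=n\lambda-R$, which, combined with the definition of $Y_\phi$, yields the scalar identity $F_\phi=2Y_\phi$ on the base. The terms coming from the $-v+\lambda uv$ piece of $\widetilde{\phi}$ telescope against the quadratic cross-term in $|\widetilde{\nabla}\widetilde{\phi}|^2$, and one finds $\widetilde{F}_\phi=F_\phi-2Y_\phi=0$ identically.

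For case \ref{Theorem 1.2-2}, work in Euclidean coordinates where $g_{ij}=\delta_{ij}$, so $\mathrm{Rm}\equiv 0$, $(P_\phi)_{ij}=\partial_i\partial_j\phi-\lambda\delta_{ij}$, and covariant derivatives of $P_\phi$ coincide with partial derivatives and are therefore totally symmetric in all indices. The quadratic form $g_u=(I+uP_\phi)^2$, viewed as matrices via $g$, has the convenient feature that both $\partial_u g_u=2(I+uP_\phi)P_\phi$ and $g_u^{-1}=(I+uP_\phi)^{-2}$ have closed forms, so the Christoffel formulas above can be expanded explicitly. Substituting into $\widetilde{\mathrm{Ric}}$, the curvature of $g_u$ contributes terms quadratic in derivatives of $P_\phi$, which, by the total symmetry of $\partial_i\partial_j\partial_k\phi$ in flat coordinates, exactly cancel the mixed terms coming from $\widetilde{\Gamma}^v_{ij}$ and $\widetilde{\Gamma}^i_{uj}$. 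Adding $\widetilde{\nabla}^2\widetilde{\phi}$ with $\phi_u=\phi-uY_\phi$ and $Y_\phi=-\tfrac{1}{2}|\nabla\phi|^2+\lambda\phi$, a direct expansion in powers of $u$ yields $\widetilde{P}_\phi\equiv 0$ and $\widetilde{F}_\phi\equiv 0$, again exactly.

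The main obstacle is the algebraic bookkeeping in case \ref{Theorem 1.2-2}: one must correctly identify the cancellations between the curvature of $g_u$, the $u$-derivative Christoffels, and the Hessian of $\widetilde{\phi}$. The strategy for keeping the computation manageable is to work in Euclidean coordinates so that $\nabla$ reduces to $\partial$, which automatically enforces the symmetry identities $\nabla_i(P_\phi)_{jk}=\nabla_j(P_\phi)_{ik}=\nabla_k(P_\phi)_{ij}$ that drive the cancellations. Once the $O(u^0)$ and $O(u^1)$ coefficients are seen to vanish by these symmetries and the definition of $Y_\phi$, the structure of $g_u=(I+uP_\phi)^2$ forces all higher-order coefficients to vanish as well, establishing the claim.
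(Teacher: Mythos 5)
Your strategy of direct verification is essentially the same as the paper's (Section 7). Your case \ref{Theorem 1.2-1} is correct and actually spelled out more explicitly than the paper's one-line reduction to the iterative formulas: the Hessian identities $\widetilde{\nabla}^2_{uv}\widetilde{\phi}=\lambda$, $\widetilde{\nabla}^2_{ij}\widetilde{\phi}=\nabla^2_{ij}\phi$, and the scalar identity $F_\phi=2Y_\phi$ for solitons all check out and give $\widetilde{P}_\phi=0$, $\widetilde{F}_\phi=0$ exactly. For case \ref{Theorem 1.2-2} your plan also matches the paper's---exploit the factorization $g_u=(I+uP_\phi)^2$ and the total symmetry of $\nabla P_\phi=\nabla^3\phi$ when $g$ is flat---but where the real work lives (showing the cancellations actually close up) you only assert ``a direct expansion in powers of $u$ yields $\widetilde{P}_\phi\equiv 0$.'' The paper organizes this by first proving a connection relation ${}^{g_u}\nabla_i\eta_j={}^g\nabla_i\eta_j-u(V_\phi)^k_l(P_\phi)^l_{i;j}\eta_k$ with $V_\phi:=(I+uP_\phi)^{-1}$, deducing $\widetilde{\mathrm{Rm}}\equiv 0$, and then computing $\widetilde{\nabla}^2\widetilde{\phi}-\lambda\widetilde{g}$ component by component; you should carry out at least the $(\widetilde{\nabla}^2\widetilde{\phi})_{i\infty}$ and $(\widetilde{\nabla}^2\widetilde{\phi}-\lambda\widetilde{g})_{ij}$ cancellations explicitly rather than appealing to ``the structure of $g_u$.'' Finally, one peripheral remark in your setup is wrong: $\widetilde{P}_\phi$ and $\widetilde{F}_\phi$ are \emph{not} polynomial in $u$ even though $g_u,\phi_u$ are, because the Christoffel symbols and curvature involve $g_u^{-1}=(I+uP_\phi)^{-2}$, a nonterminating power series in general; so the claim that ``verifying vanishing at $u=0$ plus vanishing of finitely many $u$-derivatives will suffice'' should be dropped. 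This does not undermine the argument---you are proving identities holding for all $u$, not doing a finite jet check---but the justification as stated is incorrect.
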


In ~\cites{Case2014sd,Case2016v}, Case proposes a natural definition of the first three weighted renormalized volume coefficients for manifolds with density, and proves that they are variational. In this paper, we extend these results to all renormalized volume coefficients. Given a normal ambient metric of the form of \cref{normalambientmetric}, we define the \emph{renormalized volume} $v_\phi$ as 
\[v_\phi:=\frac{e^{-\widetilde{\phi}}}{e^{-\phi}}\left(\frac{\operatorname{det} g_{u}}{\operatorname{det} g}\right)^{1 / 2},\]
and the \emph{renormalized volume coefficients} for manifolds with density as
\begin{equation}
\label{volcoefficientsdef}
v_{k,\phi}:=\frac{1}{k!}\restr{\partial_u^k}{u,v=0}v_\phi, \end{equation}
where $\partial_u^k$ denotes $\underbrace{\partial_u\dots \partial_u}_{k\text{ times}}$.
We have 
\begin{equation}
\begin{aligned}
\label{v-formulas}
v_{1,\phi}&=\frac{1}{2}R_{\phi},\\
v_{2,\phi}&=\frac{1}{2}((v_{1,{\phi}})^2-|P_\phi|^2),\\
v_{3,\phi}&=\frac{1}{6}(v_{1,\phi}^3-3v_{1,\phi}|P_\phi|^2+2(P_\phi)^3)+\frac{1}{3}\langle P_\phi,B_\phi\rangle.
\end{aligned}
\end{equation}
These recover the formulas for $v_{1,\phi},v_{2,\phi}$ and $v_{3,\phi}$ in~\cites{Case2014sd,Case2016v}.

We can explicitly write down the ambient metric and renormalized volume coefficients for Einstein metrics when $\lambda=0$. With their help, we later study the evolution of the generalized $\mathcal{F}$--functional under the Ricci flow. See \cref{ricci-flow} below.
\begin{theorem}
 \label{second-examples-theorem}
For Einstein manifolds $(M^n,g,\phi,\lambda=0)$ that satisfy 
\begin{equation}
\label{einstein-manifold-equation}
\mathrm{Ric}=2\mu g,\quad \phi=c
\end{equation}
for some $c\in\mathbb{R}$, the normal ambient metric $(\widetilde{g},\widetilde{\phi})$ of the form of \cref{normalambientmetric} satisfies
\begin{equation}
\label{einstein-ambient-metric-2}
g_u=(1+4\mu u)g,\quad \phi_u=\frac{n}{4}\mathrm{ln}|1+4\mu u|+c.   \end{equation}
Moreover, we deduce from \cref{einstein-ambient-metric-2} that
\begin{equation}
\label{einstein-volume-coefficients}
    v_{k,\phi}=\frac{1}{k!}\mu^k \prod_{j=0}^{k-1}(n-4j).
\end{equation}
\end{theorem}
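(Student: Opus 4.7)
The strategy is to verify directly that the proposed ansatz produces a normal ambient metric satisfying $\widetilde{P}_\phi \equiv 0$ and $\widetilde{F}_\phi \equiv 0$ identically (not merely modulo $O(u^\infty)$), and then to invoke the uniqueness in \cref{ambient-metric-theorem} to identify it as \emph{the} weighted ambient metric. The explicit formula \cref{einstein-volume-coefficients} for $v_{k,\phi}$ will then fall out of an elementary Taylor expansion.

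Writing $f(u):=1+4\mu u$, the ansatz gives $\widetilde{g}=2\,du\,dv+f(u)g$ and, since $\lambda=0$, $\widetilde{\phi}=\tfrac{n}{4}\log|f(u)|+c-v$. The first step is to compute the Christoffel symbols of $\widetilde{g}$. Because of the block structure and the fact that $g$ has no $u$-dependence in the manifold directions, the only nonvanishing Christoffels are $\Gamma^k_{ij}=\Gamma^k_{ij}(g)$, $\Gamma^v_{ij}=-\tfrac{f'}{2}g_{ij}$, and $\Gamma^k_{ui}=\tfrac{f'}{2f}\delta^k_i$. From these I would compute $\widetilde{\mathrm{Ric}}$ and $\widetilde{\nabla}^2\widetilde{\phi}$ block by block. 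Using $f''=0$ and the Einstein condition $\mathrm{Ric}(g)=2\mu g$: in the $ij$-block, $\widetilde{R}_{ij}=R_{ij}(g)=2\mu g_{ij}$ cancels $\widetilde{\nabla}^2_{ij}\widetilde{\phi}=-\Gamma^v_{ij}\partial_v\widetilde{\phi}=-2\mu g_{ij}$; in the $uu$-block, $\widetilde{R}_{uu}=4n\mu^2/f^2$ cancels $\widetilde{\nabla}^2_{uu}\widetilde{\phi}=\phi_u''=-4n\mu^2/f^2$; the remaining components vanish trivially. Hence $\widetilde{\mathrm{Ric}}_\phi\equiv 0$ and therefore $\widetilde{P}_\phi\equiv 0$. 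For $\widetilde{F}_\phi$, the gradients $\partial_u\widetilde{\phi}=n\mu/f$ and $\partial_v\widetilde{\phi}=-1$ give $|\widetilde{\nabla}\widetilde{\phi}|^2=2\widetilde{g}^{uv}\partial_u\widetilde{\phi}\,\partial_v\widetilde{\phi}=-2n\mu/f$, and a direct computation using $\sqrt{|\det\widetilde{g}|}=f^{n/2}\sqrt{\det g}$ gives $\widetilde{\Delta}\widetilde{\phi}=-2n\mu/f$; their difference vanishes, so $\widetilde{F}_\phi\equiv 0$. By \cref{ambient-metric-theorem}, $(\widetilde{g},\widetilde{\phi})$ is the weighted ambient metric.

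For the renormalized volume coefficients, I will plug the explicit forms into the definition: $v_\phi|_{v=0}=e^{c-\phi_u}(\det g_u/\det g)^{1/2}=f^{-n/4}\cdot f^{n/2}=(1+4\mu u)^{n/4}$. The generalized binomial expansion $(1+4\mu u)^{n/4}=\sum_k\binom{n/4}{k}(4\mu u)^k$ combined with the identity $k!\binom{n/4}{k}=\tfrac{1}{4^k}\prod_{j=0}^{k-1}(n-4j)$ then yields \cref{einstein-volume-coefficients}. The main obstacle is the bookkeeping in the $uu$-component of the Ricci calculation, which requires carefully tracking the contracted Christoffel $\Gamma^\mu_{\mu u}=nf'/(2f)$ and its $u$-derivative alongside the quadratic contraction $\Gamma^\mu_{u\lambda}\Gamma^\lambda_{\mu u}=n(f'/(2f))^2$; the linearity of $f$ (so that $f''=0$) keeps these terms under control and produces the precise cancellation with $\phi_u''$.
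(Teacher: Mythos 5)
Your proposal is correct, and its structure (verify the ansatz, then invoke the uniqueness of the normal ambient metric) mirrors the paper's.  The difference is in how the verification is carried out.  The paper works through the evolution equations already extracted in \cref{sec:formal-theory}, writing $(\widetilde{P}_\phi)_{ij}=0$ and $\widetilde{F}_\phi=0$ as the first-order system
\[
2\mu g_{ij}-\tfrac12\,g'_{ij}=0,\qquad -\tfrac12 g^{ij}g'_{ij}+2\phi'=0,
\]
and checking that $g_u=(1+4\mu u)g$, $\phi_u=\tfrac n4\ln|1+4\mu u|+c$ solves it; the other components of $\widetilde{P}_\phi$ are then handled implicitly by the Bianchi-identity argument in the proof of \cref{normal-ambient-metric-exists}.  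You instead compute $\widetilde{\mathrm{Ric}}$, $\widetilde{\nabla}^2\widetilde{\phi}$, $\widetilde{\Delta}\widetilde{\phi}$ and $|\widetilde{\nabla}\widetilde{\phi}|^2$ from scratch for the explicit metric $2\,du\,dv+f(u)g$, and verify $\widetilde{P}_\phi\equiv 0$ and $\widetilde{F}_\phi\equiv 0$ component by component (exactly, not just to infinite order).  Your Christoffel symbols and the cancellations in the $ij$- and $uu$-blocks all check out: $\widetilde{R}_{ij}=2\mu g_{ij}=-\widetilde{\nabla}^2_{ij}\widetilde{\phi}$, $\widetilde{R}_{uu}=n(f')^2/(4f^2)=4n\mu^2/f^2=-\phi_u''$, and $\widetilde{\Delta}\widetilde{\phi}=-2n\mu/f=|\widetilde{\nabla}\widetilde{\phi}|^2$.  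The renormalized-volume step, $v_\phi|_{v=0}=f^{-n/4}\cdot f^{n/2}=(1+4\mu u)^{n/4}$ followed by the binomial expansion, reproduces \cref{einstein-volume-coefficients} exactly as the paper does.  Your route is more computational but also more self-contained: it avoids having to interpret the symbol $g_{ij}$ in \cref{ambient-equations-for-solitons} (which must be read as the fixed background metric rather than $g_u$), and it does not rely on the inductive Bianchi argument to dispose of the $(\widetilde{P}_\phi)_{I\infty}$ components.  Both arguments are valid.
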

The equations of $\mathcal{F}$-gradient Ricci flow are~\cite[(1.1)]{Perelman1}
\begin{equation*}
\begin{aligned}
\partial_t g&=-2\mathrm{Ric}_\phi,\\
\partial_t \phi&=-R-\Delta\phi.
\end{aligned}
\end{equation*}

Similarly, the equations of $\mathcal{W}$--gradient Ricci flow are~\cite{Perelman1}*{(3.3)}
\begin{equation*}
\begin{aligned}
 \partial_t g_{ij}&=-2(\mathrm{Ric}+\nabla^2\phi),\\
 \partial_t \phi&=-R-\Delta\phi+\frac{ n}{2\tau},\\
 \partial_t \tau&=-1.
\end{aligned}
\end{equation*}
Note that the $\mathcal{F}$ and $\mathcal{W}$ gradient Ricci flows are equivalent to the Ricci flow via the pullback of $(g,\phi)$ with respect to the one-parameter family of diffeomorphisms $\psi_t$, generated by $\nabla\phi$~\cites{Perelman1}. 

We have a formal solution to the $\mathcal{F}$--gradient Ricci flow in the ambient space $(\widetilde{\mathcal{G}},\widetilde{g},\widetilde{\phi},\lambda=0)$ along the vector field $(v_{1,\phi})_u\partial_v-\partial_u$, where $(v_{1,\phi})_u:=\frac{1}{2}(R_u+2\Delta_u\phi_u-|\nabla_u\phi_u|^2)$~(cf.\ \cref{v-formulas}). We also have a solution to the $\mathcal{W}$--gradient Ricci flow, but it exists in a transverse direction to the ambient space $(\widetilde{\mathcal{G}},\widetilde{g},\widetilde{\phi},\lambda)$ for $\lambda\neq 0$, and will be explained later in this section.
\begin{theorem}
\label{ricci-flow}
Given a manifold with density $\man$ with $\lambda=0$, we have a formal solution to the $\mathcal{F}$--gradient Ricci flow along the vector field $X:=v_{1,\phi}\partial_v-\partial_u$. In other words, we have
\begin{equation*}
\begin{aligned}
(L_X g_u)_{ij}&=-2 [(\mathrm{Ric}_\phi)_u]_{ij}\bmod O(u^\infty),\\
L_X \widetilde{\phi}&=-(R_u+\Delta\phi_u)\bmod O(u^\infty).
\end{aligned}
\end{equation*} \end{theorem}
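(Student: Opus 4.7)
The plan is to compute both sides of each identity directly using the normal-form ambient metric, and show that each reduces to the defining ambient conditions $\widetilde{P}_\phi = 0$ and $\widetilde{F}_\phi = 0$ modulo $O(u^\infty)$. Since $g_u$, $\phi_u$, and $v_{1,\phi}$ depend only on $(u,x)$ and not on $v$, a direct expansion of the Lie derivatives along $X = v_{1,\phi}\partial_v - \partial_u$ gives
\begin{equation*}
(L_X g_u)_{ij} = -\partial_u(g_u)_{ij}, \qquad L_X \widetilde{\phi} = -v_{1,\phi} - \partial_u \phi_u,
\end{equation*}
using $\widetilde{\phi} = \phi_u - v$ for $\lambda = 0$. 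Unpacking $v_{1,\phi} = \tfrac{1}{2}(R_u + 2\Delta_u \phi_u - |\nabla_u \phi_u|^2)$, the theorem then reduces to verifying, modulo $O(u^\infty)$, the pair of formal flow identities
\begin{equation*}
\partial_u(g_u)_{ij} = 2((\mathrm{Ric}_\phi)_u)_{ij}, \qquad \partial_u \phi_u = \tfrac{1}{2}(R_u + |\nabla_u \phi_u|^2).
\end{equation*}

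To extract these from the ambient conditions, I compute the Christoffel symbols of $\widetilde{g} = 2\,du\,dv + g_u$: the only nonvanishing entries are $\widetilde{\Gamma}^v_{ij} = -\tfrac{1}{2}\partial_u(g_u)_{ij}$, $\widetilde{\Gamma}^k_{ij} = \Gamma^k_{ij}[g_u]$, and $\widetilde{\Gamma}^k_{uj} = \tfrac{1}{2}(g_u^{-1}\partial_u g_u)^k{}_j$, while $\widetilde{\Gamma}^u_{ab} \equiv 0$ and all Christoffels whose lower indices lie in $\{u,v\}$ vanish. Substituting into the Ricci formula and using that $\widetilde{\Gamma}^u_{ab} \equiv 0$ and $g_u$ has no $v$-dependence, the would-be cross terms all drop and one obtains the clean identity $\widetilde{R}_{ij} = R_{ij}[g_u]$. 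The Hessian picks up exactly one correction, coming from $\widetilde{\Gamma}^v_{ij}\partial_v\widetilde{\phi}$: explicitly $(\widetilde{\nabla}^2 \widetilde{\phi})_{ij} = (\nabla^2[g_u]\phi_u)_{ij} - \tfrac{1}{2}\partial_u(g_u)_{ij}$. Thus the $ij$-block of $\widetilde{P}_\phi = 0$ modulo $O(u^\infty)$ translates exactly into the first flow identity.

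For the second identity, the same Christoffels yield $\widetilde{\Delta}\widetilde{\phi} = \Delta_u\phi_u - \tfrac{1}{2}\mathrm{tr}_{g_u}(\partial_u g_u)$ and $|\widetilde{\nabla}\widetilde{\phi}|^2 = -2\partial_u\phi_u + |\nabla_u \phi_u|^2$, where the $-2\partial_u\phi_u$ term arises from the null cross term $2\widetilde{g}^{uv}\partial_u\widetilde{\phi}\,\partial_v\widetilde{\phi}$. Inserting the first identity $\partial_u g_u = 2(\mathrm{Ric}_\phi)_u$ into the trace gives $\tfrac{1}{2}\mathrm{tr}_{g_u}(\partial_u g_u) = R_u + \Delta_u \phi_u$, so $\widetilde{F}_\phi = 0$ modulo $O(u^\infty)$ collapses to $2\partial_u\phi_u = R_u + |\nabla_u\phi_u|^2$, completing the proof. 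The main technical obstacle is the Christoffel and curvature bookkeeping for the parabolic Lorentzian form $2\,du\,dv + g_u$: because $dv$ is null and $\widetilde{g}_{uu} = 0$, several terms one might expect by analogy with product metrics or with the standard Fefferman-Graham ambient metric are absent here, and one must check carefully that no hidden contributions from the null direction or from the $u$-dependence of the Christoffels modify $\widetilde{R}_{ij}$.
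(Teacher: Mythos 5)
Your proof is correct and follows essentially the same route as the paper: compute the Lie derivatives $(L_X g_u)_{ij}=-\partial_u(g_u)_{ij}$ and $L_X\widetilde\phi=-v_{1,\phi}-\partial_u\phi_u$, then reduce to the identities $\partial_u g_u = 2(\mathrm{Ric}_\phi)_u$ and $\partial_u\phi_u=\tfrac12(R_u+|\nabla_u\phi_u|^2)$ imposed by $\widetilde P_\phi,\widetilde F_\phi\equiv 0\bmod O(u^\infty)$. The only difference is that you re-derive those two identities via a direct Christoffel--curvature computation for $\widetilde g=2\,du\,dv+g_u$ (correctly, including the key fact $\widetilde R_{ij}=R_{ij}[g_u]$ for this null normal form), whereas the paper simply invokes \cref{ijcomponent,scomponent}, which were established in \cref{sec:formal-theory}.
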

 Note that $\widetilde{g}(X,\nabla v_\phi)=0$, i.e. $X$ is orthogonal to $\nabla\phi$.

Let $\tau=\frac{1}{2\lambda}$. For $\lambda\neq 0$, we define 
\begin{equation}
\mathcal{W}_{k,\phi} :=\int_M  \tau^k v_{k,\phi} \,(4\pi\tau)^{-n/2} e^{-\phi}\mathrm{dvol}_g.
\end{equation}
When $\lambda=0$, we define 
\begin{equation}
\mathcal{F}_{k,\phi}=\int_M v_{k,\phi}e^{-\phi}\mathrm{dvol}_g.
\end{equation}
These are fully non-linear analogues of Perelman's $\mathcal{F}$--functional and $\mathcal{W}$--functional. Indeed, 
\begin{multline*}
\int_M \tau v_{1,\phi}(4\pi\tau)^{-n/2}e^{-\phi}\mathrm{dvol}_g\\=\frac{1}{2}\int_M [\tau(R+|\nabla\phi|^2)+\phi-n](4\pi\tau)^{-n/2}e^{-\phi}\mathrm{dvol}_g
\end{multline*}
coincides~\cite{Case2014s} with Perelman's $\mathcal{W}$--functional~\cite{Perelman1}. Similarly, it is easy to see that $\int_M v_{1,\phi}e^{-\phi}\mathrm{dvol}_g$ corresponds with Perelman's $\mathcal{F}$--functional.
We now define a Ricci as a gradient flow spacetime, in the spirit of the Ricci flow spacetime as defined by Kleiner-Lott~\cite{KleinerLott2017}. This is the same as a Kleiner-Lott spacetime modulo a diffeomorphism on $(M^n,g,\phi,\lambda)$ generated by $\nabla\phi$. When $\lambda=0$, we later construct a global ambient half-space from such a spacetime. When $\lambda>0$, we show that this spacetime lies inside a space that may roughly be thought of as a space in which the weighted ambient spaces form the time slices.  
\begin{definition}
\label{def-spacetime}
A \emph{gradient Ricci flow spacetime} is a tuple of the form $\left(\mathcal{M}, g_t, \phi_t, t, \partial_t,\lambda\right)$, where
\begin{enumerate}[label=(\roman*)]
\item $\mathcal{M}$ is a smooth manifold with boundary,
\item $t$ is a submersion $t: \mathcal{M} \rightarrow I$ where $I \subset \mathbb{R}$ is a time interval,
\item $\partial \mathcal{M}=$ $t^{-1}(\partial I)$
\item $\partial_t t \equiv 1$,
\item $g$ is a smooth inner product on the spatial sub-bundle $\operatorname{ker}(d t) \subset \mathcal{T} \mathcal{M}$, and we have
$$
\begin{aligned}
\mathcal{L}_{\partial_t} g & =-2 (\mathrm{Ric}+\nabla^2\phi), \\
\mathcal{L}_{\partial_t} \phi & =-(R+\Delta \phi).
\end{aligned}
$$
\end{enumerate}
\end{definition}
The function $t$ is called the ``time function". 
We now define a global weighted ambient space. Given a gradient Ricci flow spacetime, we can explicitly construct a global weighted ambient half-space, which is a pre-ambient space that satisfies the equations $\widetilde{P}_\phi=\widetilde{F}_\phi=0$ globally, i.e. every point in $M\times \mathbb{R}\times [0,\epsilon]$ for some $\epsilon>0$, and not just formally at $M\times \mathbb{R}\times \{0\}$.
\begin{definition}
    A \emph{global weighted ambient space} is a weighted pre-ambient space $(\widetilde{\mathcal{G}},\widetilde{g},\widetilde{\phi},\lambda)$ such that $\widetilde{P}_\phi=\widetilde{F}_\phi=0$.
\end{definition}
\begin{theorem}
    \label{h-embedding}
Given a gradient Ricci flow spacetime $\mathcal{M}$, there exists a global ambient half-space of the form $(\mathcal{M}\times \mathbb{R},\widetilde{g},\widetilde{\phi},\lambda)$. 
\end{theorem}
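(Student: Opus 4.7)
The strategy is to use the time function $t$ on the spacetime $\mathcal{M}$ as the ambient $u$-coordinate (up to an appropriate $\lambda$-dependent reparametrization), and then recognize that the gradient Ricci flow equations in \cref{def-spacetime} are precisely the exact transport equations that force $\widetilde{P}_\phi$ and $\widetilde{F}_\phi$ to vanish identically on a normal ambient metric. In this way, the $O(u^\infty)$ error that appears in the formal construction of \cref{ambient-metric-theorem} is killed, because the Ricci flow provides an honest global solution to the relevant ODE system in $u$.

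First, I form $\cmG:=\mathcal{M}\times\mathbb{R}$ with $v$ the coordinate on the second factor, and I define the ambient coordinate $u$ in terms of the time function $t\colon\mathcal{M}\to I$. For $\lambda=0$ I take $u=-t$; for $\lambda\neq 0$ I use the reparametrization that relates the $\mathcal{W}$-flow in time $t$ to the $\mathcal{F}$-type system in $\tau=1/(2\lambda)$, so that the slice $\{u=u_0\}\times\{v=v_0\}$ is identified with the corresponding time slice of $\mathcal{M}$. I then define $g_u$ and $\phi_u$ by pulling back the spatial metric and weight from that slice, and set
\[
\cg=2\,du\,dv+g_u,\qquad \cphi=\phi_u-v+\lambda uv,
\]
putting $(\cg,\cphi)$ in the normal form of \cref{normalambientmetric}.

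Second, I verify $\widetilde{P}_\phi\equiv 0$ and $\widetilde{F}_\phi\equiv 0$ identically on $\cmG$. The computations of $\widetilde{P}_\phi$ and $\widetilde{F}_\phi$ for a normal ambient metric (carried out in the proof of \cref{ambient-metric-theorem}) decompose, after splitting out the components tangent to $M$, the components mixed with $du$ or $dv$, and the $dv\otimes dv$ component, into transport equations in $u$ expressing $\partial_u g_u$ in terms of $(\ric)_u$ and $\partial_u\phi_u$ in terms of $R_u+\Delta_u\phi_u$. Under the identification $u=-t$, these transport equations pull back exactly to
\[
\mathcal{L}_{\partial_t}g=-2(\mathrm{Ric}+\nabla^2\phi),\qquad \mathcal{L}_{\partial_t}\phi=-(R+\Delta\phi),
\]
which is precisely the content of \cref{def-spacetime}. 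This is the same matching already observed in the formal setting of \cref{ricci-flow} via the Ricci flow vector field $X=v_{1,\phi}\partial_v-\partial_u$; the only difference is that here the equations hold exactly on all of $\mathcal{M}$, not just modulo $O(u^\infty)$. Restricting $u$ to the image of $I$ under the time reparametrization yields the global ambient half-space.

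The main technical obstacle will be the $\lambda\neq 0$ case: the term $\lambda uv$ in $\cphi$ and the rescaling between the $\mathcal{W}$-flow and $\mathcal{F}$-flow inject extra $v$- and $u$-dependent terms into $\widetilde{P}_\phi$ and $\widetilde{F}_\phi$ that must be canceled by choosing the correct reparametrization of $t$ and the correct scaling of $(g_u,\phi_u)$ relative to $(g_t,\phi_t)$. Once this reparametrization is fixed, the ambient transport equations become exactly the spacetime equations of \cref{def-spacetime}, and the verification goes through. Uniqueness of the formal expansion in \cref{ambient-metric-theorem} then ensures that this smooth global $(\cg,\cphi)$ is the unique normal representative of the ambient equivalence class.
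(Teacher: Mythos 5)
There is a genuine gap in the second step: the normal-form ambient transport equation for $\phi_u$ is \emph{not} $\partial_u\phi_u = R_u+\Delta_u\phi_u$. From \cref{scomponent} combined with \cref{ijcomponent}, the normal ambient forces $\partial_u\phi_u = -Y_\phi$ (up to a $\lambda$-factor away from $u=0$), i.e.\ $\phi_u'\sim\tfrac12(R+|\nabla\phi|^2-2\lambda\phi)$, whereas the gradient Ricci flow spacetime of \cref{def-spacetime} gives $\mathcal{L}_{\partial_t}\phi=-(R+\Delta\phi)$, so under $u=-t$ one would have $\phi_u'=R+\Delta\phi$. These two right-hand sides differ (already at $\lambda=0$ they differ by $\tfrac12(R+2\Delta\phi-|\nabla\phi|^2)=v_{1,\phi}$). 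Precisely because of this mismatch, the Ricci flow vector field in \cref{ricci-flow} is $X=v_{1,\phi}\partial_v-\partial_u$ and not $-\partial_u$: the $\partial_v$ component is what cancels the discrepancy in $L_X\widetilde{\phi}$ (since $\widetilde\phi=\phi_u-v$ and $\partial_v\widetilde\phi=-1$). Your proposal explicitly identifies the Ricci-flow direction with $-\partial_u$ (``the identification $u=-t$''), so the $\widetilde F_\phi$ component does not vanish for the metric you wrote down, and the normal-form claim fails.

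The paper's own proof sidesteps this by \emph{not} placing the metric in normal form. It sets $\widetilde{g}=2\,du\,dv+g_u-(1-\lambda u)^2(R_\phi)_u\,du^2$ with $u$ defined by $\partial_t=-(1-\lambda u)\partial_u$; the $du^2$ correction (equivalently, a nonzero $\widetilde g_{\infty\infty}$) is exactly what absorbs the $v$-component of the Ricci-flow direction, so that the spacetime evolution equations become literally the conditions $\widetilde P_\phi=\widetilde F_\phi=0$, verified by direct computation of the Christoffel symbols together with the weighted Bianchi identity. Normality is only recovered afterwards via the ambient-equivalence argument (\cref{ricci-equivalent-normal}), which involves flowing along the $g$-transversal and cannot be achieved by a bare change of the $u$-coordinate. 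To repair your argument you would either need to (a) allow a $\widetilde g_{\infty\infty}$ term as the paper does, or (b) choose the ambient $(u,v)$-coordinates so that $\partial_t$ corresponds to $-X=\partial_u-v_{1,\phi}\partial_v$ rather than $-\partial_u$, which amounts to the same gauge correction. Your final appeal to the uniqueness in \cref{ambient-metric-theorem} also overstates what that theorem gives: it only provides uniqueness up to ambient-equivalence, so you still need the analogue of \cref{ricci-equivalent-normal} (which in the paper is a separate theorem) to identify your object with the unique normal representative.
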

We call this a \emph{gradient Ricci flow global ambient half-space}. Note that the global ambient metric measure structure $(\widetilde{g},\widetilde{\phi})$ is not in normal form. 

A \emph{normal global ambient space} is a space that is globally a normal ambient space. In particular, it is a global pre-ambient space $(\widetilde{\mathcal{G}},\widetilde{g},\widetilde{\phi},\lambda)$ such that at any $z\in\widetilde{\mathcal{G}}$, there exist coordinates $\{v,x^i,u\}$ that satisfy the following: 
\begin{enumerate}[label=(\roman*)]
 \item  the set of $u\in\bR$ such that $(z,u)\in \cmG$ is an open interval $I_z$ containing $0$;
 \item the curve on $I_z$ defined as $u\mapsto(z,u)$ is a geodesic in $\cmG$; and
 \item it holds that $\cg=\boldsymbol{g} + 2dudv$.
\end{enumerate}
In this article, we focus on global ambient half-spaces. We prove that any global weighted ambient half-space is ambient-equivalent to a normal weighted ambient half-space. In particular, this implies that a gradient Ricci flow spacetime, and therefore a Kleiner-Lott spacetime, can be uniquely embedded in a normal weighted ambient half-space.

Note that two global weighted ambient half-spaces are said to be \emph{ambient-equivalent} if they are diffeomorphism equivalent, and the diffeomorphism $\psi$ restricts to the identity map on the conformal cone $(M\times\mathbb{R},g,\phi,\lambda)$ at $t=0$.
\begin{theorem}
\label{ricci-equivalent-normal}
Any global ambient half-space is ambient-equivalent to the normal global ambient half-space.  \end{theorem}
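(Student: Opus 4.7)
The plan is to adapt the Fefferman--Graham normal-form construction via a geodesic exponential map from the conformal cone, extended to the weighted Lorentzian setting. Given a global weighted ambient half-space $(\widetilde{\mathcal{G}},\widetilde{g},\widetilde{\phi},\lambda)$, I will produce a diffeomorphism $\Psi$ restricting to the identity on the cone $\mathcal{G}\subset\widetilde{\mathcal{G}}$ for which $\Psi^*\widetilde{g}$ and $\Psi^*\widetilde{\phi}$ take the form of \cref{normalambientmetric}.

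The starting point is to single out along $\mathcal{G}$ the canonical transverse vector field $T$ characterized by $\widetilde{g}(T,T)=0$, $\widetilde{g}(T,\partial_{x^i})=0$ for each coordinate on the base $M$-factor of $\mathcal{G}$, and $\widetilde{g}(T,\partial_v)=1$. Since $\widetilde{g}|_{T\mathcal{G}}$ is degenerate with kernel spanned by $\partial_v$ while $\widetilde{g}$ itself is Lorentzian, the orthogonal-to-$T\mathcal{G}$ plane in $T\widetilde{\mathcal{G}}|_{\mathcal{G}}$ contains $\partial_v$, and a dimension count with the three conditions pins $T$ down uniquely up to a sign, which is fixed by requiring $T$ to point into the half-space. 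Define $\Psi(z,u):=\gamma_z(u)$, where $\gamma_z$ is the $\widetilde{g}$-geodesic with $\gamma_z(0)=z$ and $\dot\gamma_z(0)=T(z)$. By the inverse function theorem, $\Psi$ is a local diffeomorphism near $\mathcal{G}$.

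In the pullback coordinates $(v,x^i,u)$, the vector field $\partial_u=\dot\gamma$ is geodesic and null on $\mathcal{G}$, so $\widetilde{g}(\partial_u,\partial_u)\equiv 0$. The identities $\widetilde{g}(\partial_u,\partial_v)\equiv 1$ and $\widetilde{g}(\partial_u,\partial_{x^i})\equiv 0$ follow from the Jacobi-field framework: each coordinate vector field on $\mathcal{G}$ extends to a Jacobi field $J$ along the family of geodesics, and for any such $J$ the function $u\mapsto\widetilde{g}(\dot\gamma,J)$ is affine with constant term given by the normalization of $T$ against $J(0)$ and linear coefficient $\widetilde{g}(T,\nabla_X T)=\tfrac{1}{2}X(\widetilde{g}(T,T))=0$ for $X=J(0)\in T\mathcal{G}$, since $T$ is null throughout $\mathcal{G}$. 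This yields $\Psi^*\widetilde{g}=2\,du\,dv+g_u$. For the potential, the condition $\widetilde{F}_\phi=0$ on a normal-form metric, together with the linear-in-$v$ structure imposed by $\widetilde{P}_\phi=0$ in the $vv$ direction, reduces after commuting $\partial_v$ through the weighted equations to a first-order transport ODE in $u$ for $\partial_v\widetilde{\phi}$ with initial datum $\partial_v\widetilde{\phi}|_{u=0}=-1$ coming from the weighted conformal cone; its solution is $\partial_v\widetilde{\phi}=-1+\lambda u$, which integrates in $v$ to $\Psi^*\widetilde{\phi}=\phi_u-v+\lambda uv$ with $\phi_0=\phi$.

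The main obstacle is promoting $\Psi$ from a local diffeomorphism near $\mathcal{G}$ to a global one on the full half-space: this requires both completeness of the $T$-geodesics and injectivity of the resulting flow. I expect completeness to follow from the half-space assumption via a Hopf--Rinow style argument adapted to the distinguished null foliation generated by $T$, and injectivity from standard geodesic uniqueness combined with the product structure $\widetilde{\mathcal{G}}\cong\mathcal{G}\times\mathbb{R}_{\geq 0}$ implicit in the term ``half-space''.
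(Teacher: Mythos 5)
Your geodesic-exponentiation argument is a genuinely different route from the paper's proof. The paper defines the transversal vector field $V$ at \emph{every} point of $\widetilde{\mathcal{G}}$ via the conditions of \cref{Vconditions'} and then takes $\psi_t$ to be the flow of $V$; you instead restrict the transversal to $\mathcal{G}$ and exponentiate along geodesics, exactly as the paper does for the local statement \cref{pullback-normal}. Your route is arguably cleaner for verifying condition~(ii) of \cref{normal-definition}: geodesic exponentiation automatically produces geodesic $u$-lines, whereas the flow of a globally defined $V$ has geodesic integral curves only if $\nabla_V V=0$, which the paper does not check. Your Jacobi-field computation is correct; in fact $u\mapsto\widetilde{g}(\dot\gamma,J)$ is constant, not merely affine, since the slope $\widetilde{g}(T,\nabla_{J(0)}T)=\tfrac12 J(0)\bigl(\widetilde{g}(T,T)\bigr)$ vanishes because $\widetilde{g}(T,T)\equiv 0$ holds on all of $\mathcal{G}$.

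One correction. The normal form $\Psi^*\widetilde{\phi}=\phi_u-v+\lambda uv$ does not come from $\widetilde{P}_\phi=0$ or $\widetilde{F}_\phi=0$ and requires no transport ODE. Since the transversal and $\widetilde{g}$ are dilation-equivariant, so is $\Psi$, hence $\Psi^*\widetilde{\phi}$ inherits the homogeneity $\delta_s^*(\Psi^*\widetilde{\phi})=\Psi^*\widetilde{\phi}+s$ of \cref{preambient-defintion}(iii); with \cref{t-equation}, differentiating this identity in $s$ gives $\partial_v(\Psi^*\widetilde{\phi})=\lambda u-1$ directly. This is exactly the remark stated just after \cref{preambient-defintion}, so invoking the $vv$-component of $\widetilde{P}_\phi$ is a detour.

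The genuine gap, which you flag, is the globalization of $\Psi$. A Hopf--Rinow style argument is not available: $\widetilde{g}$ is Lorentzian and your $u$-geodesics are null, so there is no metric completeness theory to invoke. The paper's own proof also treats this casually, asserting only that the flow is defined for all relevant $t$. The ingredient that actually makes the global version go through is that for a global ambient half-space the equations $\widetilde{P}_\phi=\widetilde{F}_\phi=0$ hold \emph{exactly} rather than merely to infinite order on $\mathcal{G}$, so the normal-form coefficient ODEs from \cref{normal-ambient-metric-exists} integrate along the entire given product factor rather than only formally, simultaneously giving the domain of $\Psi$ and its injectivity. As written your proof leaves this step to hope, and it should be made explicit.
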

In particular, we deduce from \cref{ricci-equivalent-normal} that the gradient Ricci flow global ambient half-space is ambient-equivalent to a normal global ambient half-space. This allows us to conclude the following: 
\begin{theorem}
\label{every-global-ambient}
Every global ambient half-space contains a gradient Ricci flow spacetime of the form of \cref{def-spacetime}.
\end{theorem}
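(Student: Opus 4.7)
The plan is to reduce to normal form via \cref{ricci-equivalent-normal} and then to exhibit the gradient Ricci flow spacetime as the image of a flow in the ambient half-space generated by the Ricci flow vector field of \cref{ricci-flow}. By \cref{ricci-equivalent-normal}, the given global ambient half-space is ambient-equivalent to a normal one, so we may assume $\widetilde{g}=2du\,dv+g_u$ and $\widetilde{\phi}=\phi_u-v+\lambda uv$. Because the ambient space is \emph{global}, the identities $\widetilde{P}_\phi=0$ and $\widetilde{F}_\phi=0$ hold exactly throughout $\widetilde{\mathcal{G}}$ rather than only modulo $O(u^\infty)$, so the formal computation of \cref{ricci-flow} is promoted to an exact equality along all of $\widetilde{\mathcal{G}}$.

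Next I would define the candidate spacetime. Let $X:=v_{1,\phi}\partial_v-\partial_u$ be the Ricci flow vector field of \cref{ricci-flow} (suitably modified when $\lambda\neq 0$), and let $\Phi_s$ denote its flow. Since $X(u)=-1$, the orbits of $X$ are transverse to the constant-$u$ slices, and they foliate an open neighborhood of the initial slice $\{u=0\}\cap\{v=0\}\cong M$ in $\widetilde{\mathcal{G}}$. Define $\Psi:M\times I\to\widetilde{\mathcal{G}}$ by $\Psi(x,t):=\Phi_{-t}(x,0,0)$ for $t$ in the maximal interval of existence $I$, and set $\mathcal{M}:=\Psi(M\times I)$ with time function $t$ induced by the second factor. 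Then $\Psi$ is a diffeomorphism onto its image, and $\partial_t t\equiv 1$ by construction.

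It remains to verify the remaining axioms of \cref{def-spacetime}. Since $X$ has no $\partial_{x^i}$ component, the pushforwards $\Psi_*\partial_{x^i}$ equal $\partial_{x^i}$, so the spatial sub-bundle $\ker(dt)\subset T\mathcal{M}$ is spanned by the $\partial_{x^i}$ and inherits the metric $g_u|_{u=-t}$ from $\widetilde{g}$. The exact equations $\widetilde{P}_\phi=0$ and $\widetilde{F}_\phi=0$, together with the identities established in the proof of \cref{ricci-flow} (now holding exactly rather than to infinite order), then yield
\begin{align*}
\mathcal{L}_{\partial_t}g&=-2(\mathrm{Ric}+\nabla^2\phi),\\
\mathcal{L}_{\partial_t}\phi&=-(R+\Delta\phi),
\end{align*}
where $\phi$ on $\mathcal{M}$ denotes the pull-back of $\widetilde{\phi}$ under $\Psi$.

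The hardest part will be the careful bookkeeping of the $v$-coordinate along the $X$-orbits and its interaction with the weight function: along an orbit of $X$ the coordinate $v$ is not constant but evolves by $\dot v=v_{1,\phi}$, so computing $\mathcal{L}_{\partial_t}\phi$ on $\mathcal{M}$ requires balancing the $-v+\lambda uv$ contributions of $\widetilde{\phi}$ against the evolution of $\phi_u$ in $u$ in order to arrive at the correct evolution equation. A secondary difficulty is the $\lambda\neq 0$ case, which is not explicitly covered by \cref{ricci-flow} but should yield to an analogous vector field construction adapted to the $\mathcal{W}$-type structure; since \cref{def-spacetime} is independent of $\lambda$, one only needs to absorb the additional $\lambda uv$ term into the same calculation.
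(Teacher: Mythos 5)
The paper's proof of \cref{every-global-ambient} is not a direct construction at all: it is obtained by composing \cref{ricci-equivalent-normal} (any global ambient half-space normalizes to a unique normal global ambient half-space) with \cref{h-embedding} (any gradient Ricci flow spacetime embeds into a specific, non-normal global ambient half-space, which then also normalizes to that same normal form), so that the spacetime is carried into the given half-space by the composed ambient-equivalence diffeomorphisms. Your approach instead tries to exhibit the spacetime directly inside the normal form as the saturation of the initial slice under the flow of the vector field $X=v_{1,\phi}\partial_v-\partial_u$ of \cref{ricci-flow}. That is a genuinely different and more concrete route, and for $\lambda=0$ it does work once you flip the sign (with $\Psi(x,t)=\Phi_{-t}(x,0,0)$ you get $\Psi_*\partial_t=-X$ and hence $\mathcal{L}_{\partial_t}g=+2\,\mathrm{Ric}_\phi$; you want $\Phi_{+t}$). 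Your observation that globality promotes the $O(u^\infty)$ identities of \cref{ricci-flow} to exact ones is correct and is exactly why this works for $\lambda=0$.

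However, for $\lambda\neq 0$ there is a genuine gap, not merely a bookkeeping difficulty. In normal form the exact equations (see the end of \cref{sec:formal-theory}) give $\partial_u g_u=2(1-\lambda u)^{-1}P_\phi$, and for a vector field of the form $Z=A\,\partial_v+B\,\partial_u$ (with $\widetilde{g}$ in normal form, so $\widetilde{g}_{0i}=\widetilde{g}_{\infty i}=0$) one computes $(L_Z\widetilde{g})_{ij}=B\,\partial_u g_{ij}$, which is always a multiple of $(P_\phi)_{ij}$. But \cref{def-spacetime} demands $\mathcal{L}_{\partial_t}g=-2\,\mathrm{Ric}_\phi=-2(P_\phi+\lambda g)$, and the $-2\lambda g_{ij}$ piece cannot be produced this way. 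This discrepancy is precisely what the $-(1-\lambda u)^2(R_\phi)_u\,du^2$ term in the proof of \cref{h-embedding} is engineered to absorb: the spacetime sits inside the non-normal ambient metric, not along a flow line of a $(\partial_v,\partial_u)$-plane vector field in the normal one. Your appeal to a "$\mathcal{W}$-type" modification also does not resolve this, because \cref{def-spacetime} is the $\mathcal{F}$-flow for all $\lambda$ (no $n/(2\tau)$ term), and the $\mathcal{W}$-flow vector field of \cref{w-theorem} lives in the enlarged space $\widetilde{\mathcal{G}}\times(-\epsilon,\tau)$, not in $\widetilde{\mathcal{G}}$. The missing idea is to route through \cref{h-embedding} and the ambient-equivalence of \cref{ricci-equivalent-normal}, which is how the paper handles general $\lambda$.
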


We now prove the monotonicity of the $\mathcal{F}_{k,\phi}$ functionals for Einstein manifolds. 
\begin{theorem}
\label{einstein-long-term-behavior}
Given a compact Einstein manifold $(M^n,g,\phi,\lambda=0)$ of the form of \cref{einstein-manifold-equation} with $\mu> 0$, the behavior of $\mathcal{F}_{k,\phi}$
for $n\geq 3$ and $j\geq 1$ is the following: 
\begin{enumerate}[label=(\roman*)]
\item if $4(j-1)<n<4j$, then $\mathcal{F}_{k,\phi}$ is strictly monotone increasing for $k\in\{1,\dots,j\}\cup\{j+2s,s\geq 1\}$, and strictly monotone decreasing for $k\in\{j+2s-1,s\geq 1\}$; and

\item if $n=4j$, then $\mathcal{F}_{k,\phi}$ is strictly monotone increasing for $k\in\{1,\dots,j\}$, and remains $0$ for $k>j$.
\end{enumerate}
\end{theorem}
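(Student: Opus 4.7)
The plan is to compute the $\mathcal{F}$--gradient Ricci flow explicitly on the Einstein manifold, substitute into the closed-form expression for $v_{k,\phi}$ from \cref{second-examples-theorem}, and then read off monotonicity from a product of integers.

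First, I would observe that the class of Einstein structures with constant potential is preserved by the $\mathcal{F}$--gradient Ricci flow. Indeed, starting from $\mathrm{Ric}=2\mu g$ and $\phi=c$, the equations $\partial_t g=-2\mathrm{Ric}_\phi$ and $\partial_t\phi=-R-\Delta\phi$ reduce to $\partial_t g=-4\mu g$ and $\partial_t\phi=-R(g(t))$ with $R$ spatially constant, so the ODEs integrate to
\begin{equation*}
g(t)=(1-4\mu t)g,\qquad \phi(t)=c+\tfrac{n}{2}\ln(1-4\mu t),\qquad t\in\bigl[0,\tfrac{1}{4\mu}\bigr).
\end{equation*}
In particular $(g(t),\phi(t))$ is again Einstein of the form \cref{einstein-manifold-equation} with constant $\mu(t)=\mu/(1-4\mu t)$. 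A direct check (or the general identity $\partial_t(e^{-\phi}\mathrm{dvol}_g)=e^{-\phi}\bigl[-(R+\Delta\phi)-\tfrac12\mathrm{tr}_g(\partial_t g)\bigr]\mathrm{dvol}_g=0$) shows that the weighted measure is flow--invariant: $e^{-\phi(t)}\mathrm{dvol}_{g(t)}=e^{-c}\mathrm{dvol}_g$.

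Next, I would apply \cref{einstein-volume-coefficients} of \cref{second-examples-theorem} to the time-$t$ structure, obtaining
\begin{equation*}
v_{k,\phi(t)}=\frac{1}{k!}\mu(t)^k\prod_{j=0}^{k-1}(n-4j)=\frac{\mu^k}{k!(1-4\mu t)^k}\prod_{j=0}^{k-1}(n-4j).
\end{equation*}
Combining with the invariance of the weighted measure,
\begin{equation*}
\mathcal{F}_{k,\phi}(t)=\frac{\mu^k\,e^{-c}\mathrm{vol}_g(M)}{k!}\cdot\frac{\prod_{j=0}^{k-1}(n-4j)}{(1-4\mu t)^k}.
\end{equation*}
Differentiating gives $\frac{d}{dt}\mathcal{F}_{k,\phi}(t)=\frac{4\mu k\,e^{-c}\mathrm{vol}_g(M)\,\mu^k}{k!(1-4\mu t)^{k+1}}\prod_{j=0}^{k-1}(n-4j)$, and since every factor outside the product is strictly positive on $[0,\tfrac{1}{4\mu})$, the monotonicity of $\mathcal{F}_{k,\phi}$ is governed purely by the sign of $\prod_{j=0}^{k-1}(n-4j)$.

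Finally, I would do the sign bookkeeping. In case (i), when $4(j-1)<n<4j$, the factor $n-4i$ is positive for $i\le j-1$ and negative for $i\ge j$. Hence the product is positive for $k\in\{1,\dots,j\}$, and for $k=j+s$ with $s\ge 1$ it picks up exactly $s$ negative factors, giving a positive value when $s$ is even and a negative value when $s$ is odd, which matches the claimed partition $\{j+2s\}\cup\{j+2s-1\}$. In case (ii), when $n=4j$, the factor $n-4j=0$ appears in the product whenever $k\ge j+1$, so $\mathcal{F}_{k,\phi}\equiv 0$ identically for $k>j$, while for $k\le j$ all factors are positive and $\mathcal{F}_{k,\phi}$ is strictly increasing. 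The main obstacle is only the clean bookkeeping of signs across the cases; once the explicit formula for $\mathcal{F}_{k,\phi}(t)$ is in hand, no further analytic input is required.
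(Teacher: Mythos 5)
Your proof is correct and takes essentially the same route as the paper: apply \cref{einstein-volume-coefficients} to the time-$t$ Einstein structure to get $v_{k,\phi}(t)\propto(1-4\mu t)^{-k}\prod_{j=0}^{k-1}(n-4j)$, and read the monotonicity off the sign of the integer product. You re-derive the explicit flow solution and verify measure invariance and sign bookkeeping more carefully than the paper, which instead invokes the flow vector field $X=v_{1,\phi}\partial_v-\partial_u$ from \cref{ricci-flow} and argues via $u\to -\tfrac{1}{4\mu}$, but the underlying argument is the same.
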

By \cref{einstein-volume-coefficients}, the long-time behavior of $\mathcal{F}_{k,\phi} $ as stated in  \cref{einstein-long-term-behavior} corresponds to the sign of $v_{k,\phi}$.
\begin{enumerate}[label=(\roman*)]
\item $\mathcal{F}_{k,\phi}$ is strictly monotone increasing under the $\mathcal{F}$--gradient Ricci flow if $v_{k,\phi}> 0$;

\item $\mathcal{F}_{k,\phi}$ is strictly monotone decreasing under the $\mathcal{F}$--gradient Ricci flow if $v_{k,\phi}< 0$; and

\item $\mathcal{F}_{k,\phi}$ remains $0$ under the $\mathcal{F}$--gradient Ricci flow if $v_{k,\phi}=0$.
\end{enumerate}

We also prove the monotonicity of the $\mathcal{F}_{k,\phi}$ functional under slightly different conditions. See \cref{lemma-shifted-cone} for more details. 

We now show that a solution to the $\mathcal{W}$--gradient flow exists in a space that may roughly be thought of as a space in which weighted ambient spaces form the time slices. 
\begin{theorem}
\label{w-theorem}
Consider the space $(\widetilde{\mathcal{G}},\widetilde{g}(s),\widetilde{\phi}(s),(2\tau(s))^{-1})\times (-\epsilon,\tau)$, with $\{v,x^i,u(s)\}$ as the coordinates on $(\widetilde{\mathcal{G}},\widetilde{g}(s),\widetilde{\phi}(s),(2\tau(s))^{-1})$ and $s$ the coordinate on $(-\epsilon,\tau)$. Here $(\widetilde{\mathcal{G}},\widetilde{g}(s),\widetilde{\phi}(s),(2\tau(s))^{-1})$ is a normal global weighted ambient space for each $s$, where $(g_{u(s)}(s),\phi_{u(s)}(s),\tau(s))$ are of the form
\begin{equation*}
\begin{aligned}
 \tau(s)&=(1-s/\tau)\tau,\\
 g_{u(s)}(s)&=(1-s/\tau)g_u,\\
 \phi_{u(s)}(s)&=\phi_u.
\end{aligned}
\end{equation*}
Then along the vector field
\begin{equation*}
Y:=(1-(2\tau)^{-1} u)^{-1}(v_{1,\phi})_u\partial_v-(1-(2\tau)^{-1} u)\partial_u+\partial_s,
\end{equation*}
we have,
\begin{equation*}
\begin{aligned}
 L_Y g_{ij}&=-2(\mathrm{Ric}+\nabla^2\phi),\\
 L_Y \phi&=-R-\Delta\phi+\frac{n}{2\tau},\\
 L_Y \tau&=-1.
\end{aligned}
\end{equation*}
\end{theorem}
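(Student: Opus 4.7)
The plan is to verify each of the three evolution equations by directly computing the Lie derivative of the relevant tensor along $Y$, exploiting the decomposition $Y = Y' + \partial_s$ where $Y' := (1-(2\tau)^{-1}u)^{-1}(v_{1,\phi})_u\partial_v - (1-(2\tau)^{-1}u)\partial_u$ is tangent to the time slice $\widetilde{\mathcal{G}} \times \{s\}$. By linearity of the Lie derivative, $L_Y = L_{Y'} + L_{\partial_s}$. The role of $Y'$ is analogous to the $\mathcal{F}$-flow vector field $X$ from \cref{ricci-flow}, now adapted to the $\lambda\neq 0$ setting via the correction factors $(1-(2\tau)^{-1}u)^{\pm 1}$, while $\partial_s$ handles the rescaling of $\tau(s)$ and $g_u(s)$.

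The equation $L_Y \tau = -1$ is immediate since $\tau(s)=\tau-s$ depends only on $s$. For $L_Y g_{ij}$, I would evaluate at $u=s=0$, where the physical time slice sits. Using the ambient Lie derivative formula, the $\partial_v$-part contributes $0$ because $\widetilde{g}$ is independent of $v$ in normal form and $[\partial_v,\partial_i]=0$; the $\partial_s$-part contributes $\partial_s[(1-s/\tau)g_u]|_{s=u=0}=-g/\tau=-2\lambda g$; and the $-(1-(2\tau)^{-1}u)\partial_u$-part contributes $-\partial_u g_u|_{u=0}=-2P_\phi=-2(\mathrm{Ric}_\phi-\lambda g)$ by the normal form \cref{quadratic-expression}. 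Summing the three yields $-2\mathrm{Ric}_\phi = -2(\mathrm{Ric}+\nabla^2\phi)$, as required.

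For $L_Y \phi$, I would compute $Y(\widetilde{\phi}(s))$ at $u=v=s=0$, where $\widetilde{\phi}(s)=\phi_u-v+\frac{uv}{2\tau(s)}$. At $u=v=0$ the $\partial_s$-contribution vanishes; the $a\partial_v$-contribution yields $(v_{1,\phi})_{u=0}\cdot(-1) = -\tfrac{1}{2}R_\phi$; and the $b\partial_u$-contribution yields $(-1)\cdot\partial_u\phi_u|_{u=0} = Y_\phi$ (using $\partial_u\phi_u|_{u=0}=-Y_\phi$ from the normal form). Substituting the definitions of $R_\phi$ and $Y_\phi$ from \cref{basic-definitions-1,basic-definitions-2}, a short computation gives
\begin{equation*}
-\tfrac{1}{2}R_\phi + Y_\phi \;=\; -R-\Delta\phi + n\lambda \;=\; -R-\Delta\phi + \tfrac{n}{2\tau},
\end{equation*}
matching the claimed right-hand side.

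The main obstacle, as in the proof of \cref{ricci-flow}, is to ensure that the identities above hold along the full one-parameter family of time slices (rather than only pointwise at $u=v=0$, $s=0$), so that the integral curves of $Y$ actually trace out a solution to the $\mathcal{W}$-gradient Ricci flow. This amounts to showing that the correction factors $(1-(2\tau)^{-1}u)^{\pm 1}$ in $Y$ precisely absorb the higher-order $u$-terms produced by the $\lambda$-dependent pieces of the normal form of the weighted ambient metric \cref{normalambientmetric}. I expect this to be verifiable inductively in the order of $u$, by differentiating the governing ambient PDE system used in \cref{ambient-metric-theorem} and repeating the formal argument of \cref{ricci-flow} with the additional $\lambda$-terms tracked throughout.
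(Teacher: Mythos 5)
Your decomposition $Y = Y' + \partial_s$ and the pointwise arithmetic at $u=v=s=0$ are correct and match the structure of the paper's proof, but you explicitly flag the extension to general $u$ as an unresolved ``main obstacle'' and leave it open. That step is not a large induction in the paper --- it is closed in one line by the observation recorded at the end of \cref{sec:formal-theory}: for a normal weighted ambient metric, \cref{ijcomponent,scomponent} hold at all $u$ (not just $u=0$), so that
\begin{equation*}
g'_{ij} = 2(1-\lambda u)^{-1}(P_\phi)_u,\qquad \phi'_u = -(1-\lambda u)^{-1}(Y_\phi)_u.
\end{equation*}
Plugging these into $L_{Y'} g_u = -(1-\lambda u)\,\partial_u g_u$ and $L_{Y'}\widetilde{\phi} = (1-\lambda u)^{-1}(v_{1,\phi})_u\,\partial_v\widetilde{\phi} - (1-\lambda u)\,\partial_u\widetilde{\phi}$ shows that the factors $(1-\lambda u)^{\pm 1}$ cancel exactly, giving $L_{Y'} g = -2P_\phi = -2(\mathrm{Ric}+\nabla^2\phi)+\tau^{-1}g$ and $L_{Y'}\phi = -R-\Delta\phi+\frac{n}{2\tau}$ along the entire fiber, not merely at $u=0$. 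The $\partial_s$ part then contributes $\partial_s g_{u(s)}(s) = -\tau(s)^{-1}g_{u(s)}(s)$ and $\partial_s\tau = -1$, which cancels the residual $\tau^{-1}g$ term. You correctly anticipate that the correction factors ``precisely absorb the $\lambda$-dependent $u$-terms,'' but that absorption is a direct consequence of the displayed formula rather than an induction to be carried out; a complete proof must cite and use it.

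One smaller issue: for the general normal ambient metric the identity $\partial_u g_u|_{u=0}=2P_\phi$ comes from \cref{g'phi'}, not from \cref{quadratic-expression}. The latter is the closed-form expression valid only when $(M,g,\phi,\lambda)$ is a gradient Ricci soliton or $g$ is flat (\cref{examples-theorem}), and should not be invoked for the general case.
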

Using this construction, it becomes easy to deduce the monotonicity of Perelman's $\mathcal{W}$ functional, and also study the monotonicity of the higher $\mathcal{W}_{k,\phi}$ functionals. We prove that the $\mathcal{W}_{k,\phi}$ functionals are constant along the Ricci flow if and only if the manifold with density is a shrinking soliton.
\begin{theorem}
\label{einstein-w-flow}
Given a manifold with density $(M^n,g,\phi,(2\tau)^{-1})$ with $\tau>0$, the functionals $\mathcal{W}_{k,\phi}$ are constant along the Ricci flow for all $k\in\mathbb{N}$ if and only if $(M^n,g,\phi,(2\tau)^{-1})$ is a shrinking soliton such that $P_\phi=0$. 
\end{theorem}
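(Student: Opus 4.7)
\emph{Proof proposal.} I plan to prove the equivalence by treating the two directions separately.

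For the ``only if'' direction (constancy implies soliton), it suffices to exploit the case $k=1$. On a closed manifold, integration by parts yields $\int_M \Delta\phi \cdot \subba = \int_M |\nabla\phi|^2 \subba$, so $\mathcal{W}_{1,\phi}$ coincides with Perelman's original $\mathcal{W}$-functional. I would derive the standard monotonicity formula
\[
\frac{d}{ds}\mathcal{W}_{1,\phi}(s) \;=\; 2\int_M \tau\,|P_\phi|^2 \subba \;\geq\; 0
\]
using the ambient vector field $Y$ of \cref{w-theorem}, as the paper announces for \cref{w-increasing}. Constancy of $\mathcal{W}_{1,\phi}$ then forces $P_\phi\equiv 0$ pointwise, and connectedness of $M$ automatically gives $F_\phi = c$, which is the shrinking soliton condition.

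For the converse direction, I plan to compute $v_{k,\phi}$ on a shrinking soliton explicitly and verify that each $\mathcal{W}_{k,\phi}$ is preserved by the self-similar $\mathcal{W}$-gradient flow. With $P_\phi=0$ and $F_\phi=c$, tracing $\ric = (2\tau)^{-1} g$ gives $R+\Delta\phi = n/(2\tau)$, hence $R_\phi = c$ and $Y_\phi = c/2$ is constant on $M$. By \cref{examples-theorem}, the normal ambient metric reduces to $g_u = g$ and $\phi_u = \phi - uc/2$, so the renormalized volume takes the closed form $v_\phi = \exp(uc/2 + v(1-\lambda u))$. Reading off the $u^k$-coefficient yields $v_{k,\phi} = (c/2)^k/k!$, constant on $M$, and therefore
\[
\mathcal{W}_{k,\phi} \;=\; \frac{(\tau c/2)^k}{k!}\int_M \subba.
\]
Along the $\mathcal{W}$-gradient flow on a soliton one verifies directly that $\partial_s\phi = -R-\Delta\phi+n/(2\tau)=0$, $g(s)=(1-s/\tau)g$, and $\tau(s)=\tau-s$. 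The scaling of $F_\phi$ under this joint rescaling gives $c(s) = c\tau/\tau(s)$, so the product $\tau(s)c(s) = \tau c$ is preserved; at the same time $\subba$ is preserved pointwise since $(4\pi\tau(s))^{-n/2}(\tau(s)/\tau)^{n/2} = (4\pi\tau)^{-n/2}$. Hence $\mathcal{W}_{k,\phi}(s)$ is independent of $s$ for every $k$.

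The main technical hurdle lies in the converse: one must track $g$, $\phi$, $\tau$, and the soliton constant $c$ simultaneously to see all cancellations. A conceptually cleaner alternative, which I would use if the direct computation becomes unwieldy, is to compute the Lie derivative $L_Y$ of the integrand $\tau^k v_{k,\phi}(4\pi\tau)^{-n/2}e^{-\phi}\mathrm{dvol}_g$ directly in the ambient space of \cref{w-theorem}: since on a soliton $g_u$ is constant in $u$ and $\phi_u$ is affine, this Lie derivative collapses to zero by inspection, bypassing the scaling bookkeeping entirely.
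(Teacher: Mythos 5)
Your proposal is correct and follows essentially the same route as the paper: the ``only if'' direction reduces to the $k=1$ monotonicity formula $L_X\mathcal{W}_{1,\phi}=\int_M|P_\phi|^2\subba$, which forces $P_\phi\equiv0$; the ``if'' direction is the explicit computation on a shrinking soliton showing $g(s)=(1-s/\tau)g$, $\phi(s)=\phi$, $\tau(s)=\tau-s$, $v_{k,\phi}=(c/2)^k/k!$, and that $\tau(s)^k v_{k,\phi}(s)$ is preserved because the soliton constant scales as $c(s)=c\tau/\tau(s)$ — which is exactly the paper's observation that $Y_\phi(t)=(\tau-t)^{-1}c'$ so $\tau(t)^k v_{k,\phi}(t)=(c')^k/k!$. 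Your verification that $\subba$ is preserved under the joint rescaling and your identification $Y_\phi=c/2$ from the soliton trace identity are both correct, and the minor factor-of-$2\tau$ discrepancy in the monotonicity formula you quote (Perelman's normalization versus the paper's) is immaterial since only the sign matters.
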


We now try to deduce simple homogeneous linear representations of the tuple $(\partial_u^k g_u,\partial_u^k \phi_u)$. First, we define an analogue of the weighted extended obstruction tensor~\cites{Graham2009}. Let $\widetilde{R}$ denote the ambient curvature tensor, and let $k\geq 1$. We define the tensor $(\Omega_\phi)^{(k)}_{ij}$ as \begin{equation*}(\Omega_\phi)^{(k)}_{ij}:= \widetilde{R}_{\infty ij\infty,\underbrace{\scriptstyle \infty\dots\infty}_{k-1}}\Bigr|_{u=0}, \end{equation*} where $\infty$ refers to the $u$ coordinate, and $i,j$ refer to the coordinates on the manifold $M$.
\cref{examples-theorem} implies that $(\Omega_\phi)^{(k)}_{ij}=0$ when $\man$ is flat or is a gradient Ricci soliton. We now show that $ v_{k,\phi} $ can be written as homogeneous polynomials in $P_\phi$, $Y_{\phi}$ and $(\Omega_\phi)_{ij}^{(k)}$. Indeed, $\partial^k_{u}g_{ij},\partial^k_{u}\phi$ and $\frac{1}{2}g^{ij}\partial^k_{u}g_{ij}-\partial^k_{u}\phi$ can also be written in this manner (compare with \cite[Theorem 1.2]{Graham2009}). 
\begin{theorem}
\label{linear-expansion-theorem}
Let $k\geq 1$.
There exist linear combinations $\mathcal{Q}$ and $\mathcal{S}$ of partial contractions of tensor products of $Y_{\phi}, P_\phi$ and $(\Omega_\phi)_{ij}^{(k)}$ with respect to $(g_u,f_u)$ such that
\begin{equation}
\begin{aligned}
\label{linear-formulas}
\partial^k_{u}g_{ij}&=\mathcal{Q}(P_\phi,(\Omega_\phi)^{(1)}_{ij},\dots,(\Omega_\phi)^{(k-1)}_{ij}),\\
\partial^k_{u}\phi&=\mathcal{S}(Y_{\phi},P_\phi,(\Omega_\phi)^{(1)}_{ij},\dots,(\Omega_\phi)^{(k-1)}_{ij}).
\end{aligned}
\end{equation}
Similarly, there exist linear combinations $\mathcal{T}$ and $\mathcal{V}_k$ of complete contractions of tensor products of $Y_{\phi}, P_\phi$ and $(\Omega_\phi)^{(k)}$ with respect to $(g_u,f_u)$ such that
\begin{equation}
\begin{aligned}
\label{v-equation}
\frac{1}{2}g^{ij}\partial_u^{k} g_{ij}-\partial_u^{k} \phi&=\mathcal{T}(Y_{\phi},P_\phi,(\Omega_\phi)^{(1)}_{ij},\dots,(\Omega_\phi)^{(k-2)}_{ij}),\\
v_{k,\phi} &=\mathcal{V}_{k}(Y_{\phi}, P_\phi, (\Omega_\phi)^{(1)}, \ldots, (\Omega_\phi)^{(k-2)}).
\end{aligned}
\end{equation}
\end{theorem}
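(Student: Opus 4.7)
The plan is to proceed by strong induction on $k$, extracting explicit recursion relations from the ambient equations $\widetilde{P}_\phi = 0$ and $\widetilde{F}_\phi = 0$ modulo $O(u^\infty)$ in the normal-form coordinates of \cref{normalambientmetric}. In the coordinates $\{v, x^i, u\}$ with $\widetilde{g} = 2du\,dv + g_u$ and $\widetilde{\phi} = \phi_u - v + \lambda uv$, a direct computation of the Christoffel symbols, Riemann tensor, and weighted Hessian $\widetilde{\nabla}^2 \widetilde{\phi}$ shows that the $ij$-component $(\widetilde{P}_\phi)_{ij}$ contains $-\tfrac{1}{2}\partial_u^2 g_{ij}$ as its unique second-$u$-derivative term on $g_u$, with all other terms polynomial in $g_u$, $\phi_u$, $\partial_u g_u$, $\partial_u \phi_u$, their spatial covariant derivatives, and the curvature component $\widetilde{R}_{\infty i j \infty}$. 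An analogous identity combining the $\infty\infty$-component of $\widetilde{P}_\phi$ with $\widetilde{F}_\phi$ isolates $\partial_u^2 \phi_u$ as its unique top-order term. This gives the base case and, for each fixed $k$, prescribes a mechanism to solve for $\partial_u^k g_{ij}$ and $\partial_u^k \phi$.

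For the inductive step I differentiate the two identities $k-2$ times in $u$ and restrict to $u=0$; the highest-order terms remain $-\tfrac{1}{2}\partial_u^k g_{ij}$ and $\partial_u^k \phi$ respectively, plus the curvature term $\partial_u^{k-2}\widetilde{R}_{\infty i j \infty}|_{u=0} = (\Omega_\phi)^{(k-1)}_{ij}$. Applying the inductive hypothesis to rewrite each $\partial_u^\ell g_{ij}$ and $\partial_u^\ell \phi_u$ with $\ell < k$ as a polynomial in $Y_\phi$, $P_\phi$ and the $(\Omega_\phi)^{(j)}_{ij}$ for $j \leq k-2$, and noting that $(\Omega_\phi)^{(k-1)}_{ij}$ itself sits at the top of the allowed list, yields the expressions in \cref{linear-formulas}.

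For the trace identities in \eqref{v-equation}, one uses
\[
v_\phi = \exp(\phi - \phi_u + v - \lambda u v)\,(\det g)^{-1/2}(\det g_u)^{1/2},
\]
so by Fa\`a di Bruno together with the Jacobi formula, $\partial_u^k v_\phi|_{u=v=0}$ is polynomial in $\{\partial_u^\ell \phi_u,\partial_u^\ell g_{ij},\operatorname{tr}_g \partial_u^\ell g_u\}_{\ell\leq k}$, and its unique order-$k$ contribution is exactly $\tfrac{1}{2}g^{ij}\partial_u^k g_{ij} - \partial_u^k \phi$. It therefore suffices to prove the first line of \eqref{v-equation}, after which the second follows from the first part of the theorem. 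The reduction from $(\Omega_\phi)^{(k-1)}$ to $(\Omega_\phi)^{(k-2)}$ is the main technical point: tracing the recursion extracted from $(\widetilde{P}_\phi)_{ij}$ with $g^{ij}$ and combining it with the recursion from the $(\widetilde{P}_\phi)_{\infty\infty}$ / $\widetilde{F}_\phi$ pair causes $g^{ij}(\Omega_\phi)^{(k-1)}_{ij}$ to cancel, by means of the weighted contracted Bianchi identity applied to the ambient equation $\widetilde{P}_\phi = 0$. Verifying this trace-null cancellation carefully is the principal obstacle, and it is the weighted counterpart of the mechanism underlying \cite[Theorem 1.2]{Graham2009}.
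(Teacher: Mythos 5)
There is a genuine structural error in your opening computation, and the plan inherits it. For the weighted normal-form ambient metric $\widetilde{g}=2du\,dv+g_u$ of \cref{normalambientmetric} one has $\widetilde{g}^{\infty\infty}\equiv 0$ (unlike the Fefferman--Graham case, where $\widetilde{g}^{\rho\rho}$ is proportional to $\rho$). Consequently, the trace $\widetilde{\mathrm{Ric}}_{ij}=\widetilde{g}^{KL}\widetilde{R}_{KijL}$ picks up no $\widetilde{R}_{\infty ij\infty}$ term, and the Christoffel contributions to $(\widetilde{\nabla}^2\widetilde{\phi})_{ij}$ involve only $g'_{ij}$. Hence $(\widetilde{P}_\phi)_{ij}$ is a first-order equation in $u$: it contains $-\tfrac12 g'_{ij}$ but neither $-\tfrac12\partial_u^2 g_{ij}$ nor the curvature component $\widetilde{R}_{\infty ij\infty}$, contrary to your first paragraph. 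This means that differentiating $(\widetilde{P}_\phi)_{ij}=0$ will never produce the extended obstruction tensors $(\Omega_\phi)^{(k)}_{ij}$ on the right-hand side; it only reproduces higher covariant derivatives of $P_\phi$ (the weighted Bach tensor and its iterates). You have imported the shape of the unweighted $\widetilde{\mathrm{Ric}}_{ij}=0$ equation, which does not hold here.

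The correct source of the second-order structure is the Gauss-type curvature identity (no ambient equation required), namely $\widetilde{R}_{\infty ij\infty}=\tfrac12\bigl(g''_{ij}-\tfrac12 g^{kl}g'_{ki}g'_{lj}\bigr)$, rearranged as $g''_{ij}=2\Lambda^{(1)}_{ij}+\tfrac12 g^{kl}g'_{ik}g'_{jl}$ where $\Lambda^{(1)}_{ij}:=\widetilde{R}_{\infty ij\infty}$. Pairing this with $(\widetilde{P}_\phi)_{\infty\infty}=-\tfrac12 g^{kl}g''_{kl}-\tfrac14 g'^{kl}g'_{kl}+\phi''\equiv 0$ gives $\phi''=g^{ij}\Lambda^{(1)}_{ij}$, and one then closes the induction with the commutation relation $\partial_u\Lambda^{(k)}_{ij}=\Lambda^{(k+1)}_{ij}+g^{lm}g'_{m(i}\Lambda^{(k)}_{j)l}$. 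The two leading $\Lambda^{(k-1)}$ contributions to $\tfrac12 g^{ij}\partial_u^k g_{ij}$ and $\partial_u^k\phi$ then cancel directly because of the explicit form of $(\widetilde{P}_\phi)_{\infty\infty}=0$ — you do not need a separate contracted Bianchi argument, which in your write-up is doing double duty to paper over the missing $\widetilde{R}_{\infty ij\infty}$ in the $ij$-equation. Also note that $(\Omega_\phi)^{(k)}_{ij}$ is defined via \emph{covariant} $\infty$-derivatives, so $\partial_u^{k-2}\widetilde{R}_{\infty ij\infty}|_{u=0}\neq(\Omega_\phi)^{(k-1)}_{ij}$ exactly; the commutation relation above is needed to control the difference. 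Your Faà-di-Bruno/Jacobi reduction of $v_{k,\phi}$ to $\tfrac12 g^{ij}\partial_u^k g_{ij}-\partial_u^k\phi$ plus lower-order data is fine, but the inductive engine needs to be rebuilt on the Gauss identity and $(\widetilde{P}_\phi)_{\infty\infty}=0$ rather than on $(\widetilde{P}_\phi)_{ij}=0$.
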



Given a weighted conformal class of the form of \cref{conformal-class}, we define $\mathcal{C}_1$ as \[\mathcal{C}_1:=\{(g,\phi,\tau)\mid \int_M (4\pi\tau)^{-\frac{n}{2}}e^{-\phi}\mathrm{dvol}_g=1\}.\] We consider $\tau$ to be a constant here, although we may also study $\tau$ as varying. See \cites{Case2016v,Case2014sd} for more details. We first write down the variational formula for $ v_{k,\phi} $.
\begin{theorem}
\label{infinitesimal-conformal-change-theorem}
For $\phi\in \mathcal{C}_1$, let $\{\phi_t\}_{t\in (-\epsilon,\epsilon)}\subset \mathcal{C}_1$ be a smooth variation of $\phi$ such that $\phi_t=\phi+t\omega$. For $k\geq 1$, the infinitesimal conformal variation of $v_{k,\phi}$ is
\begin{equation}
\label{v-conformal-variation}
\restr{\frac{\partial}{\partial t}}{t=0} v_{k,\phi}=\omega\lambda v_{k-1,\phi} + {\nabla^*_i}^{(0)}[(L_{k,\phi})^{ij}\nabla_j \omega],
\end{equation} where $(L_{k,\phi})^{ij}=\frac{1}{k!}\restr{\partial^k_u}{u=0}[v_{\phi}\int_0^u g^{ij}(u)\,du]$, and $-\nabla^*$ is the adjoint of $\nabla$ with respect to the $L^2$-inner product induced by the weighted volume element~\cite{Case2014s}.
\end{theorem}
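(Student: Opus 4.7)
The plan is to linearize the weighted ambient metric equations under the conformal variation $\phi_t = \phi + t\omega$ and extract the variational formula from the ambient formulation of $v_{k,\phi}$.

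First, I would introduce the linearization of the normal-form data. Writing $(g_u(t), \phi_u(t))$ for the normal-form ambient data associated to $\phi_t$, set $h_u := \partial_t|_{t=0}\,g_u(t)$ and $\psi_u := \partial_t|_{t=0}\,\phi_u(t)$, with initial conditions $h_u|_{u=0} = 0$ and $\psi_u|_{u=0} = \omega$. Linearizing the normal-form ambient equations $\widetilde{P}_\phi = 0$ and $\widetilde{F}_\phi = 0$ yields coupled ODEs in $u$ for $(h_u, \psi_u)$, uniquely determined to all orders in $u$ by \cref{ambient-metric-theorem}.

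Second, I would directly differentiate the definition of $v_\phi$. Evaluating at $v = 0$ gives $v_\phi = e^{\phi - \phi_u}(\det g_u/\det g)^{1/2}$, so
\begin{equation*}
\partial_t\big|_{t=0} v_{\phi_t} = v_\phi\bigl(\omega - \psi_u + \tfrac{1}{2} g_u^{ij} h_{u,ij}\bigr),
\end{equation*}
and hence
\begin{equation*}
\partial_t\big|_{t=0} v_{k,\phi} = \tfrac{1}{k!}\partial_u^k\big|_{u=0}\bigl[v_\phi\bigl(\omega - \psi_u + \tfrac{1}{2} g_u^{ij} h_{u,ij}\bigr)\bigr].
\end{equation*}

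Third, the core of the argument is to establish, using the linearized ambient equations, the identity
\begin{equation*}
v_\phi\bigl(\omega - \psi_u + \tfrac{1}{2} g_u^{ij} h_{u,ij}\bigr) = \lambda u\,\omega\, v_\phi + \nabla_i^{*(0)}\Bigl[v_\phi\!\int_0^u\!g^{ij}(s)\,ds \cdot \nabla_j\omega\Bigr] + O(u^\infty).
\end{equation*}
Taking the $u^k/k!$ Taylor coefficient of both sides then yields \cref{v-conformal-variation}: the left side becomes $\partial_t|_{t=0}v_{k,\phi}$; the first term on the right contributes $\omega\lambda v_{k-1,\phi}$ since $\tfrac{1}{k!}\partial_u^k|_{u=0}[\lambda u\omega v_\phi] = \lambda\omega v_{k-1,\phi}$ by the Leibniz rule; and the divergence term contributes $\nabla_i^{*(0)}[(L_{k,\phi})^{ij}\nabla_j\omega]$ by the definition of $(L_{k,\phi})^{ij}$. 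The pointwise $\lambda u\omega v_\phi$ piece arises from the $\lambda uv$ term in $\widetilde{\phi}$: varying the representative by $\omega$ infinitesimally shifts the reference slice from $v = 0$ to $v = -\omega$, which interacts with $\lambda u v$ to produce $\lambda u\omega$.

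The main obstacle is establishing this key identity. The strategy is to substitute the linearized normal-form equations for $(h_u,\psi_u)$ and perform integration by parts in the spatial directions against the weighted volume element; contributions not absorbed into the divergence form should be exactly those involving $\lambda$, coming from the $\lambda$-dependent term in $\widetilde{\phi}$. This extends Case's variational computations for $v_{1,\phi}, v_{2,\phi}, v_{3,\phi}$ in \cites{Case2014sd,Case2016v} to all $k$ and parallels the proof of the variational nature of renormalized volume coefficients in the unweighted conformal case. A structural consequence of \cref{linear-expansion-theorem} could be used to organize the terms of the $u$-expansion and to verify inductively that no non-divergence, non-$\lambda$ contributions survive.
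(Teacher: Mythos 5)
Your setup is sound through the second step: writing $v_\phi = e^{\phi-\phi_u}(\det g_u/\det g)^{1/2}$ at $v=0$ and differentiating to obtain $\partial_t|_{t=0}v_\phi = v_\phi(\omega - \psi_u + \tfrac12 g_u^{ij}h_{u,ij})$ is exactly right, and the ``key identity'' you isolate in step three is indeed what must be shown. But this is where the proposal has a genuine gap, and the strategy you offer for closing it will not work. You treat the linearization $(h_u,\psi_u)$ as implicitly determined by a coupled ODE system in $u$ and then hope that ``substitution and integration by parts'' will produce the divergence form plus the $\lambda u\omega$ term. There is no mechanism given to extract that structure: the linearized normal-form equations are order-by-order constraints relating jets of $(h_u,\psi_u)$ to jets of the background data, and nothing in them makes the global $u$-dependent divergence form $\nabla_i^{*(0)}\bigl[v_\phi\int_0^u g^{ij}(s)\,ds\,\nabla_j\omega\bigr]$ visible. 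Moreover, ``integration by parts in the spatial directions'' does not apply here: the identity to be proved is pointwise (the operator $\nabla^*$ is defined as a differential operator, not as a step in an integral computation), so you cannot discard boundary terms as you would under an integral sign.

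The missing idea is to use ambient-equivalence rather than the field equations. Because the normal-form ambient data for $(g,\phi+t\omega)$ is obtained from that for $(g,\phi)$ by pullback under a dilation-commuting diffeomorphism $\psi_t$ fixing $\mathcal{G}\times\{0\}$, differentiating at $t=0$ shows that $(h_u,\psi_u)$ is \emph{exactly} a Lie derivative of the normal-form structure along a vector field $X=X^0\partial_v+X^i\partial_i+X^\infty\partial_u$, constrained by the requirement that $L_X(2\,du\,dv+g_u)$ have no $du\,dv$, $du\,dx^i$, or $du^2$ components and by the boundary data $X^0|_{u=0}=-\omega$, $X^i|_{u=0}=0$, $X^\infty|_{u=0}=0$. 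Solving those linear constraints gives the closed forms $X^0=-\omega$, $X^i=\int_0^u g^{ij}(s)\,ds\,\partial_j\omega$, $X^\infty=0$, and hence
\begin{equation*}
h_{u,ij}=2\nabla_{(i}X_{j)},\qquad \psi_u = X^i\partial_i\phi_u+\omega-\lambda\omega u,
\end{equation*}
with the $-\lambda\omega u$ term coming from $L_X(\lambda uv)$. Substituting these into your formula for $\delta\ln v_\phi$ gives $\lambda\omega u + \nabla_i^{(u)}X^i - X^i\partial_i\phi_u$, which then rearranges to $\lambda u\omega v_\phi + \nabla_i^{*(0)}(v_\phi X^i)$ by comparing the divergences relative to $g_u$ and $g_0$. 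Without the diffeomorphism argument (which is also what justifies that $X^i$ takes the displayed integral form), the closed-form expressions for $(h_u,\psi_u)$, and hence your key identity, remain unproved. Your final step (extracting the Taylor coefficient) is fine once the identity is in hand, and the interpretation of the $\lambda u\omega$ contribution as coming from the $\lambda uv$ term in $\widetilde{\phi}$ is correct.
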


\cref{v-conformal-variation} allows us to study the critical points of $\mathcal{W}_{k,\phi}$. Note that Case~\cites{Case2014sd} studies the critical points of $\mathcal{W}_{k,\phi}$ when $k\in\{1,2\}$, or when the manifold is locally conformally flat. 
\begin{theorem}
\label{first-derivative-fphim}
The critical points of $ \mathcal{W}_{k,\phi} $ in $\mathcal{C}_1$ are those for which 
\begin{equation}
\label{v-delta}
v_{k,\phi}-\lambda v_{k-1,\phi}=a
\end{equation}
for some $a\in\mathbb{R}$, where $v_{0,\phi}:=1$.
\end{theorem}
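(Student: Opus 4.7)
The plan is to compute the first variation of $\mathcal{W}_{k,\phi}$ along a path $\phi_t = \phi + t\omega$ in $\mathcal{C}_1$ and then to read off the constrained Euler--Lagrange equation, using \cref{infinitesimal-conformal-change-theorem} as the main input.

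First, I would differentiate
\[\mathcal{W}_{k,\phi_t} = \tau^k \int_M v_{k,\phi_t}\,(4\pi\tau)^{-n/2} e^{-\phi_t}\,\mathrm{dvol}_g\]
under the integral sign, keeping in mind that both the integrand $v_{k,\phi_t}$ and the weighted measure $e^{-\phi_t}\mathrm{dvol}_g$ depend on $\phi_t$ (while $g$ and $\tau$ are fixed). The product rule produces two contributions: the variation of $v_{k,\phi_t}$, which is given by \cref{infinitesimal-conformal-change-theorem}, and a term $-\omega v_{k,\phi}$ from the variation of $e^{-\phi_t}$. Combining these yields
\begin{equation*}
\restr{\frac{d}{dt}}{t=0} \mathcal{W}_{k,\phi_t} = \tau^k \int_M \Bigl[\omega\lambda v_{k-1,\phi} + {\nabla^*_i}^{(0)}\bigl((L_{k,\phi})^{ij}\nabla_j\omega\bigr) - \omega v_{k,\phi}\Bigr](4\pi\tau)^{-n/2} e^{-\phi}\mathrm{dvol}_g.
\end{equation*}

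Second, the middle divergence-type term integrates to zero. Since $-\nabla^*$ is by definition the formal adjoint of $\nabla$ with respect to the weighted $L^2$-inner product associated with $e^{-\phi}\mathrm{dvol}_g$, pairing $\nabla^*_i X^i$ against the constant function $1$ yields $\int_M \nabla^*_i X^i\, e^{-\phi}\mathrm{dvol}_g = 0$; applying this with $X^j = (L_{k,\phi})^{ij}\nabla_i\omega$ eliminates that piece, and the variation reduces to
\begin{equation*}
\restr{\frac{d}{dt}}{t=0} \mathcal{W}_{k,\phi_t} = \tau^k \int_M (\lambda v_{k-1,\phi} - v_{k,\phi})\,\omega\,(4\pi\tau)^{-n/2} e^{-\phi}\mathrm{dvol}_g.
\end{equation*}

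Third, the admissible variations $\omega$ are precisely those keeping $\phi_t$ in $\mathcal{C}_1$, which linearizes to the constraint $\int_M \omega\,(4\pi\tau)^{-n/2} e^{-\phi}\mathrm{dvol}_g = 0$. A standard Lagrange-multiplier argument then forces $\lambda v_{k-1,\phi}-v_{k,\phi}$ to be constant on $M$ at any critical point; setting this constant equal to $-a$ gives the stated relation $v_{k,\phi}-\lambda v_{k-1,\phi}=a$, and the convention $v_{0,\phi}:=1$ makes the $k=1$ case match Perelman's shrinker equation. The genuine analytic content has already been absorbed into \cref{infinitesimal-conformal-change-theorem}, so the main point to watch here is the bookkeeping of the two independent sources of variation (the integrand and the measure), together with the correct use of the weighted adjoint to discard the gradient-of-$\omega$ contribution.
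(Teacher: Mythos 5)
Your proposal is correct and follows essentially the same route as the paper: differentiate under the integral sign (picking up the variation of $v_{k,\phi_t}$ from \cref{infinitesimal-conformal-change-theorem} and the $-\omega v_{k,\phi}$ term from the measure), discard the divergence piece via the weighted adjoint, and conclude with a Lagrange-multiplier argument on the constraint $\int_M\omega\,(4\pi\tau)^{-n/2}e^{-\phi}\mathrm{dvol}_g=0$. The only cosmetic difference is that the paper packages the vanishing of the $\nabla^*$-term into a separate corollary and then cites it, whereas you unwind that step explicitly.
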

\cref{v-delta} implies that shrinking gradient Ricci solitons are critical points of $\mathcal{W}_{k,\phi}$ in $\mathcal{C}_1$ for $k\in\mathbb{N}$, thus extending~\cite[Theorems 1.3 and 9.2]{Case2014sd}.
\begin{theorem}
\label{second-derivative-theorem}
Let $\manlam$ be a compact, connected shrinking gradient Ricci soliton. Then
\[(-1)^k\restr{(\mathcal{W}_{k,\phi})^{\prime\prime}}{\mathcal{C}_1}\leq 0.\]
\end{theorem}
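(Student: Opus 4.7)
The plan is to compute the second variation of $\mathcal{W}_{k,\phi}$ at the shrinking soliton directly and exhibit it as a manifestly signed quadratic form, using the explicit soliton ambient data, integration by parts, and the Bakry--Emery spectral inequality. By \cref{examples-theorem}(i) the normal ambient data on the soliton is $g_u = g$, $\phi_u = \phi - uY_\phi$ with $Y_\phi = F_\phi/2$ constant, so $v_{k,\phi} = Y_\phi^k/k!$ and $(L_{k,\phi})^{ij} = \tfrac{Y_\phi^{k-1}}{(k-1)!}g^{ij}$. Since $v_{k,\phi} - \lambda v_{k-1,\phi}$ is then constant, \cref{first-derivative-fphim} confirms that the soliton is a critical point of $\restr{\mathcal{W}_{k,\phi}}{\mathcal{C}_1}$. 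For $\omega \in C^\infty(M)$ with $\int \omega\,d\mu = 0$, I would take $\phi_t = \phi + t\omega + t^2\omega_2 + O(t^3)$ with the constant correction $\omega_2 = \tfrac{1}{2}\int\omega^2\,d\mu$ so as to keep $\phi_t \in \mathcal{C}_1$ to second order, obtaining
\[
\restr{(\mathcal{W}_{k,\phi})^{\prime\prime}}{\mathcal{C}_1}(\omega,\omega) = \int_M \tau^k\bigl[\partial_t^2 v_{k,\phi_t} - 2\omega\,\partial_t v_{k,\phi_t} - 2\omega_2\, v_{k,\phi} + \omega^2 v_{k,\phi}\bigr]_{t=0}\, d\mu.
\]

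By \cref{infinitesimal-conformal-change-theorem} the first variation evaluates to $\partial_t v_{k,\phi_t}|_{t=0} = \tfrac{Y_\phi^{k-1}}{(k-1)!}(\Delta_\phi\omega + \lambda\omega)$, where $\Delta_\phi$ is the drift Laplacian. For the second variation $\partial_t^2 v_{k,\phi_t}|_{t=0}$, I would invoke \cref{linear-expansion-theorem} to write $v_{k,\phi} = \mathcal{V}_k(Y_\phi, P_\phi, \Omega^{(1)}, \dots, \Omega^{(k-2)})$ as a universal polynomial; at the soliton $P_\phi$ and the extended obstruction tensors vanish, so the $t^2$-coefficient receives contributions only from (a) the second derivative of the pure $Y_\phi^k/k!$ term, using $Y_\phi^{(1)} = \lambda\omega - \langle\nabla\phi,\nabla\omega\rangle$ and $Y_\phi^{(2)} = -|\nabla\omega|^2$, and (b) the terms of $\mathcal{V}_k$ that are at most quadratic in $(P_\phi,\Omega^{(j)})$, evaluated on $P_\phi^{(1)} = \nabla^2\omega$ together with the linearized $\Omega^{(j),(1)}$ extracted from the linearized ambient equations. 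After integration by parts via $\int\omega\,\Delta_\phi\omega\,d\mu = -\int|\nabla\omega|^2\,d\mu$ and the soliton Bochner identity $\int|\nabla^2\omega|^2\,d\mu = \int(\Delta_\phi\omega)^2\,d\mu - \lambda\int|\nabla\omega|^2\,d\mu$ (valid since $\mathrm{Ric}_\phi = \lambda g$), I expect the second variation to collapse to
\[
\restr{(\mathcal{W}_{k,\phi})^{\prime\prime}}{\mathcal{C}_1}(\omega,\omega) = (-1)^{k-1}\,\tau^k\,(\lambda - Y_\phi)^{k-1}\int_M\bigl[|\nabla\omega|^2 - \lambda\omega^2\bigr]\,d\mu.
\]
I would verify this directly for $k=1,2$ (both calculations are short) and prove it in general either by induction using the suggested recurrence $\restr{(\mathcal{W}_{k,\phi})^{\prime\prime}}{\mathcal{C}_1} = -\tau(\lambda - Y_\phi)\restr{(\mathcal{W}_{k-1,\phi})^{\prime\prime}}{\mathcal{C}_1}$, or by coefficient bookkeeping in $\mathcal{V}_k$.

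The claimed sign then follows from two standard facts for compact shrinking solitons: Bakry--Emery's Lichnerowicz inequality $\lambda_1(-\Delta_\phi) \geq \lambda$ (using $\mathrm{Ric}_\phi = \lambda g$) gives $\int[|\nabla\omega|^2 - \lambda\omega^2]\,d\mu \geq 0$ on $\omega$-mean-zero test functions, and the identification $Y_\phi = \lambda\mu$, where $\mu \leq 0$ is Perelman's entropy at the soliton (non-positive on any compact non-flat manifold), yields $(\lambda - Y_\phi)^{k-1} \geq 0$. Combining,
\[
(-1)^k\restr{(\mathcal{W}_{k,\phi})^{\prime\prime}}{\mathcal{C}_1}(\omega,\omega) = -\tau^k(\lambda - Y_\phi)^{k-1}\int_M\bigl[|\nabla\omega|^2 - \lambda\omega^2\bigr]\,d\mu \leq 0,
\]
which is the claim. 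The main technical obstacle is establishing the clean factorization for general $k$: the quadratic-in-$(P_\phi,\Omega^{(j)})$ contributions to $\mathcal{V}_k$ must assemble into a single scalar multiple of $\int[|\nabla\omega|^2 - \lambda\omega^2]\,d\mu$, and identifying the precise coefficients requires a careful analysis of the linearized ambient equations around the soliton data.
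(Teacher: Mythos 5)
Your overall strategy matches the paper's: the soliton is a critical point, the second variation factors as a scalar multiple of the quadratic form $\int_M(|\nabla\omega|^2-\lambda\omega^2)\,(4\pi\tau)^{-n/2}e^{-\phi}\mathrm{dvol}_g$, and the Bakry--\'Emery spectral gap (equivalently \cite[Proposition 9.1]{Case2014sd}) plus the sign of that scalar give the claim. However, your proposed explicit coefficient and the recurrence you use to justify it are wrong for $k\geq 3$. You predict the scalar $(-1)^{k-1}\tau^k(\lambda-Y_\phi)^{k-1}$, i.e.\ the recurrence $(\mathcal{W}_{k,\phi})^{\prime\prime}=-\tau(\lambda-Y_\phi)(\mathcal{W}_{k-1,\phi})^{\prime\prime}$; the correct coefficient in front of $\int_M(\lambda\omega^2-|\nabla\omega|^2)\,d\mu$, obtained by inserting the soliton data
\[
(L_{j,\phi})^{ij}=\frac{Y_\phi^{\,j-1}}{(j-1)!}g^{ij},\qquad
v_{j,\phi}=\frac{Y_\phi^{\,j}}{j!},
\]
into the second-variation formula \cref{double-derivative}, is
\[
b_k=\frac{1}{(k-2)!}\,Y_\phi^{\,k-2}\Bigl[\lambda-\frac{Y_\phi}{k-1}\Bigr]\qquad(k\geq 2),
\]
which is \emph{not} a fixed multiple of $b_{k-1}$ (compare $b_3=Y_\phi(\lambda-Y_\phi/2)$ with your prediction $\propto(\lambda-Y_\phi)^2$). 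So the ``clean factorization for general $k$'' you flag as the main obstacle does in fact fail in the specific form you wrote, and neither the induction nor the bookkeeping would close along those lines.

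Fortunately the sign analysis you carry out is robust to this: since $Y_\phi\leq 0$ on a compact shrinking soliton in $\mathcal{C}_1$ and $\lambda>0$, the bracket $\lambda-Y_\phi/(k-1)$ is positive and $Y_\phi^{\,k-2}$ has sign $(-1)^k$, so $b_k$ has sign $(-1)^k$, and combining with $\int_M(\lambda\omega^2-|\nabla\omega|^2)\,d\mu\leq 0$ still yields $(-1)^k(\mathcal{W}_{k,\phi})^{\prime\prime}\leq 0$. Where your plan diverges from the paper's is the mechanism for getting the coefficient: you propose to differentiate $v_{k,\phi_t}$ twice directly via the structural decomposition of \cref{linear-expansion-theorem}, the linearized ambient equations, and a Bochner identity, whereas the paper first establishes the general second-variation formula \cref{double-derivative} at a critical point (valid without soliton hypotheses) and then simply substitutes the soliton values of $(L_{k,\phi})^{ij}$ and $v_{k,\phi}$. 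The latter avoids both the linearization of the obstruction tensors and the Bochner step, and makes the coefficient come out correctly with no guesswork; I would recommend using that formula rather than redoing the computation from scratch.
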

We can also study $(\mathcal{W}_{k,\phi})^{\prime\prime}$ for compact solitons that are steady ($\lambda=0)$, thereby recovering the relevant results from~\cites{Case2014sd}, or expanding $(\lambda<0)$, thereby recovering the relevant results from \cite{feldman-ilmanen-knopf}. Note that compact steady and expanding solitons are Einstein manifolds.  Haslhofer~\cites{HaslhoferMuller2010} has also studied the total functional for non-compact manifolds.  

We use the ambient metric for $\lambda=0$ to construct conformally invariant weighted GJMS operators. The space of conformal densities of $\cmG$ of weight $w$, denoted as $\mathcal{E}[w]$, is defined as \[\mathcal{E}[w]=\{f:\mathcal{C}\to C^\infty(M)\mid f(\phi+v)(x)=e^{wv}f(\phi)(x)\}.\]
\begin{theorem}
\label{gjmstheorem}
For every $k\in\mathbb{N}$, there is a formally self-adjoint conformally invariant operator \[L_{2k,\phi}=\mathcal{E}[-1/2]\to \mathcal{E}[-1/2]\] with leading term $\Delta_\phi^k$.
\end{theorem}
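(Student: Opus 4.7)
The plan is to adapt the Graham-Jenne-Mason-Sparling construction \cite{GJMS1992} to the weighted ambient setting of \cref{ambient-metric-theorem} with $\lambda = 0$. Given $f \in \mathcal{E}[-1/2]$, let $\widetilde{f}$ be its $\delta_s$-homogeneous extension of weight $-1/2$ to $\widetilde{\mathcal{G}}$; in the normal coordinates of \cref{normalambientmetric}, where $\delta_s$ translates $v$ so that $\widetilde{\phi} = \phi_u - v$ transforms as $\widetilde{\phi} \mapsto \widetilde{\phi} + s$, this forces $\widetilde{f}(v, x, u) = e^{v/2}\widehat{f}(x, u)$ with $\widehat{f}(\cdot, 0) = f$. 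A direct computation of the Bakry--\'Emery Laplacian in these coordinates gives
\[\widetilde{\Delta}_{\widetilde{\phi}} = 2\partial_u \partial_v + \bigl(\Delta_{g_u} - \nabla_{g_u}\phi_u \cdot \nabla_{g_u}\bigr) + \partial_u + V_u\,\partial_v,\]
where $V_u := \tfrac{1}{2} g_u^{ij}\partial_u(g_u)_{ij} - \partial_u \phi_u$. Since the coefficient of $\partial_u \widehat{f}$ in $\widetilde{\Delta}_{\widetilde{\phi}} \widetilde{f}$ equals $2 \neq 0$ at weight $-1/2$, the Fefferman--Graham-type recursion $\widetilde{\Delta}_{\widetilde{\phi}} \widetilde{f} = O(u^{k-1})$ at $u = 0$ determines $\partial_u^j \widehat{f}|_{u = 0}$ uniquely for $1 \leq j \leq k-1$ in terms of $f$, its $x$-derivatives, and the Taylor data of $(g_u, \phi_u)$ at $u = 0$. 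I then set
\[L_{2k, \phi} f := \bigl(\widetilde{\Delta}_{\widetilde{\phi}}\bigr)^k \widetilde{f}\,\big|_{\iota(\mathcal{G})},\]
which lies in $\mathcal{E}[-1/2]$ because $\widetilde{\Delta}_{\widetilde{\phi}}$ commutes with the $\delta_s$-action.

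Iterating the above and using the recursion to convert normal derivatives of $\widehat{f}$ into tangential operators acting on $f$, each application of $\widetilde{\Delta}_{\widetilde{\phi}}$ effectively contributes a factor of $\Delta_{g_u} - \nabla_{g_u}\phi_u \cdot \nabla_{g_u}$; at $u = 0$ this becomes $\Delta_\phi$, producing the leading symbol $\Delta_\phi^k$ with lower-order curvature corrections polynomial in the Taylor coefficients of $(g_u, \phi_u)$ (expressible via \cref{linear-expansion-theorem}). Independence of $L_{2k,\phi}$ from residual ambiguity in $\widehat{f}$ follows because any further ambiguity beyond the fixed $u$-jet is $O(u^k)$, and $(\widetilde{\Delta}_{\widetilde{\phi}})^k$ applied to $O(u^k)$ remainders vanishes at $u = 0$. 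Conformal invariance is immediate from the uniqueness up to ambient equivalence of $(\widetilde{g}, \widetilde{\phi})$ supplied by \cref{ambient-metric-theorem}, combined with the intrinsic nature of the homogeneous extension and the normalization.

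The main obstacle is formal self-adjointness, which I would deduce by integration by parts on the ambient space, using that $\widetilde{\Delta}_{\widetilde{\phi}}$ is formally self-adjoint with respect to $e^{-\widetilde{\phi}}\,d\mathrm{vol}_{\widetilde{g}}$. For $f_1, f_2 \in \mathcal{E}[-1/2]$ with normalized extensions $\widetilde{f}_1, \widetilde{f}_2$, the plan is to introduce an appropriate regularization in the $(u, v)$ directions so that $\int_{\widetilde{\mathcal{G}}} \widetilde{f}_1 \widetilde{\Delta}_{\widetilde{\phi}}^k \widetilde{f}_2\, e^{-\widetilde{\phi}} d\mathrm{vol}_{\widetilde{g}}$ is defined, and then use the $\delta_s$-equivariance of the integrand --- the combined weights of $\widetilde{f}_1$, $\widetilde{f}_2$, and $e^{-\widetilde{\phi}}$ yield a definite exponential scaling in $v$ that can be normalized out by a slab-integration in $v$ divided by the slab length. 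This isolates the weight-$0$ part as a pairing on $\{u = 0\}$ against $e^{-\phi}\,d\mathrm{vol}_g$. Moving the $k$ Laplacians from $\widetilde{f}_2$ onto $\widetilde{f}_1$, the interior terms are manifestly symmetric in $(f_1, f_2)$, while the Fefferman--Graham normalizations $\widetilde{\Delta}_{\widetilde{\phi}}^j \widetilde{f}_i = O(u^{k-j})$ satisfied by both extensions cause all asymmetric boundary contributions at $u = 0$ to vanish. This mirrors the template of the classical GJMS self-adjointness proofs (Fefferman--Graham, Graham--Zworski), and the careful accounting of the $v$-regularization and $u = 0$ boundary terms is the main technical step.
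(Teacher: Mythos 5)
Your proposal takes a genuinely different and much more elaborate route than the paper, and it does not go through as written because it misses the single observation the paper's proof rests on. From \cref{ambientlaplacian}, the coefficient of $f'_u=\partial_u f_u$ in $\widetilde{\Delta}_\phi\bigl(e^{wv}f_u\bigr)$ is $(2w+1)$, which \emph{vanishes} at $w=-1/2$: at the critical weight the ambient Bakry--\'Emery Laplacian is a \emph{purely tangential} operator, $\widetilde{\Delta}_\phi\bigl(e^{-v/2}f_u\bigr)=e^{-v/2}\bigl(\Delta_\phi-\tfrac14 R_\phi\bigr)f_u$, to every order in $u$. Iterating and restricting to $u=0$ gives $L_{2k,\phi}=\bigl(\Delta_\phi-\tfrac14 R_\phi\bigr)^k$ in one line; the leading term is $\Delta_\phi^k$, and formal self-adjointness is free because the operator is a power of a second-order operator that is manifestly self-adjoint with respect to $e^{-\phi}\mathrm{dvol}_g$. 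No Fefferman--Graham recursion, no jet-matching, and no ambient integration by parts are required. Note also that, unlike the conformal ambient metric, $\widetilde{g}$ here is $\delta_s$-\emph{invariant} rather than homogeneous of degree two, so $\widetilde{\Delta}_\phi$ preserves the weight and the $\partial_u$ coefficient does not shift with each iteration; this is why there is a single critical weight valid for every $k$, with no obstruction.

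The central gap in your argument is that you take the extension to be $e^{v/2}\widehat f$ and find the $\partial_u\widehat f$ coefficient to be $2\neq 0$ --- the opposite sign from the $(2w+1)|_{w=-1/2}=0$ in \cref{ambientlaplacian}. Your Laplacian formula itself is correct; the discrepancy is entirely in the sign in the exponent, and it breaks the rest of the construction. With a nonvanishing $\partial_u$ coefficient, each application of $\widetilde{\Delta}_{\widetilde{\phi}}$ lowers the power of $u$ by one, so $(\widetilde{\Delta}_{\widetilde{\phi}})^k$ applied to a $u^k h$ modification of $\widehat f$ produces $2^k k!\,h|_{u=0}+O(u)$, not $0$. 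Thus the ``residual ambiguity'' in $\widehat f$ genuinely changes the restriction at $u=0$, contrary to your assertion, and the operator $L_{2k,\phi}$ you write down is not well defined. If instead you use the extension $e^{-v/2}f_u$, the $\partial_u$ coefficient is $0$, your recursion $\widetilde{\Delta}_{\widetilde{\phi}}\widetilde f=O(u^{k-1})$ then imposes no condition on the normal jet of $\widehat f$ at all, and you are forced back to the paper's tangentiality argument.

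The self-adjointness discussion in your proposal is also only a template: the regularization, the slab integration in $v$, and the cancellation of the asymmetric boundary terms at $u=0$ are all asserted rather than carried out, and the Graham--Zworski/Fefferman--Graham arguments do not transfer directly because the $\delta_s$-action here is a translation of $v$ rather than a scaling of $\widetilde{g}$. In any case, once the tangentiality of $\widetilde{\Delta}_\phi$ at the critical weight is observed, none of that machinery is needed.
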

This article is organized as follows: in \cref{sec:manifolds-with-density}, we recall some weighted invariants of manifolds with density. In \cref{sec:ambient-metric}, we define ambient spaces and discuss ambient-equivalence. In \cref{sec:formal-theory}, we prove \cref{ambient-metric-theorem}. In \cref{sec:ricci}, we prove \cref{ricci-flow,h-embedding,einstein-long-term-behavior,ricci-equivalent-normal}. In \cref{sec:w-funct}, we prove \cref{w-theorem,einstein-w-flow}. In \cref{sec:gjms}, we prove \cref{gjmstheorem}. In \cref{sec:examples}, we provide explicit examples of the ambient space associated with ``simple" manifolds with density, like Einstein manifolds, gradient Ricci solitons or locally conformally flat manifolds, thereby proving \cref{examples-theorem,second-examples-theorem}. In \cref{sec:renormalized-volume-coefficients}, we prove \cref{linear-expansion-theorem,first-derivative-fphim,second-derivative-theorem,infinitesimal-conformal-change-theorem}.

\subsection*{Acknowledgements}
I would like to thank Professor Jeffrey S. Case for suggesting the problem, and for his generosity with his time and ideas. I would also like to thank the anonymous referee for their helpful suggestions and corrections, which have substantially improved the presentation of the paper. 

\section{Background}
\label{sec:manifolds-with-density}
In this section, we recall the definition of a manifold with density~\cites{Case2014sd}, and then discuss some weighted invariants.
\begin{definition}
A \emph{manifold with density} is a tuple $\man$, consisting of a Riemannian manifold $(M^n,g)$, a smooth measure $e^{-\phi}\mathrm{dvol}_g$ determined by the function $\phi\in C^\infty(M)$ and the Riemannian volume element $\mathrm{dvol}_g$, and a parameter $\lambda\in\mathbb{R}$. The tuple $(g,\phi)$ is referred to as a \emph{metric measure structure} on the manifold with density.
\end{definition}
The \emph{weighted Bianchi identity} for $\man$ is 
\begin{equation}
\label{weighted bianchi}
2(\delta_\phi \text{Ric}_\phi)-\nabla R_{\phi}=0.
\end{equation}
The \emph{weighted Bach tensor} $B_{\phi}$ is defined as~\cite[Equation (1.1)]{Case2016v}
\begin{equation}
\label{weighted bach} 
\begin{split}
B_{\phi}=&\delta_\phi d P_\phi+\mathrm{Rm}\cdot P_\phi,
\end{split}
\end{equation}
where
\begin{align*}
\left(\delta_\phi d P_\phi\right)(X, Y)&:=\sum_{j=1}^n \nabla_{E_i} P_\phi\left(E_i, X, Y\right)-d P_\phi(\nabla \phi, X, Y),\\
\left(\operatorname{Rm} \cdot P_\phi\right)(X, Y)&:=\sum_{j=1}^n \operatorname{Rm}\left(E_i, X, E_j, Y\right){P_\phi}\left(E_i, E_j\right)
\end{align*}
for all $p \in M$ and all $X, Y \in T_p M$, where $\left\{E_i\right\} \subset T_p M$ is an orthonormal basis.

\begin{definition}
\label{conformalchangedefinition}
Two metric measure structures $(g,\phi)$ and $(\widehat{g},\widehat{\phi})$ are defined to be \emph{pointwise conformally equivalent} if $(\widehat{g},\widehat{\phi})=(g,\phi+\omega)$ for some $\omega\in C^\infty(M)$.
\end{definition}
We have the following conformal transformation formulas~\cite[Lemma (3.3)]{Case2014s}.
\begin{lemma}
Let $\manlam$ be a manifold with density, and let $\widehat{\phi}=\phi+\omega$. Then
\begin{equation}
\label{conformaltransformationformulas}
\begin{aligned}
\widehat{R}_{\phi}&=R_{\phi}+2\Delta\omega-2g(\nabla\phi,\nabla\omega)-|\nabla\omega|^2+2\lambda \omega,\\
\widehat{F}_\phi&=F_\phi+\Delta\omega-2g(\nabla\phi,\nabla\omega)-|\nabla\omega|^2+2\lambda\omega,\\
\widehat{\mathrm{Ric}}_{\phi}&=\mathrm{Ric}_{\phi}+\nabla^2\omega,\\
\widehat{Y}_{\phi}&=Y_{\phi}-\frac{1}{2}[2g(\nabla\phi,\nabla\omega)+|\nabla\omega|^2-2\lambda\omega].
\end{aligned}    
\end{equation}
\end{lemma}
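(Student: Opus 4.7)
The proof will proceed by direct substitution into the definitions. Because $\widehat{\phi}=\phi+\omega$ affects only the density and leaves the underlying Riemannian metric $g$ fixed, every purely Riemannian quantity ($R$, $\mathrm{Ric}$, $\nabla$, $\nabla^2$, $\Delta$) is identical for $(g,\phi)$ and $(g,\widehat{\phi})$. The plan is to substitute $\widehat{\phi}=\phi+\omega$ into the defining formulas for $\mathrm{Ric}_\phi$, $R_\phi$, $F_\phi$, and $Y_\phi$ recalled in \cref{basic-definitions-2}, expand, and collect terms.

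The Ricci identity is immediate from linearity of the Hessian: $\nabla^2\widehat{\phi}=\nabla^2\phi+\nabla^2\omega$, so $\widehat{\mathrm{Ric}}_\phi = \mathrm{Ric} + \nabla^2\phi + \nabla^2\omega = \mathrm{Ric}_\phi + \nabla^2\omega$, which is the third identity.

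For the remaining three identities, the only non-linear ingredient is $|\nabla\widehat{\phi}|^2$, which expands as
\[
|\nabla(\phi+\omega)|^2 = |\nabla\phi|^2 + 2\, g(\nabla\phi,\nabla\omega) + |\nabla\omega|^2,
\]
while $\Delta\widehat{\phi}=\Delta\phi+\Delta\omega$ and the $\widehat{\phi}$-linear pieces split additively, for instance $2\lambda(\widehat{\phi}-n)=2\lambda(\phi-n)+2\lambda\omega$. Inserting these expansions into the definitions of $R_\phi$, $F_\phi$, and $Y_\phi$, and grouping the contributions originating from $\phi$ alone (which reassemble into $R_\phi$, $F_\phi$, $Y_\phi$) versus those involving $\omega$, yields each of $\widehat{R}_\phi$, $\widehat{F}_\phi$, and $\widehat{Y}_\phi$ in the stated form.

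There is essentially no conceptual obstacle; the lemma is a bookkeeping exercise. The only point demanding care is sign tracking when distributing the minus signs in front of $|\nabla\phi|^2$ inside $R_\phi$ and $F_\phi$, and the overall $-\tfrac{1}{2}$ in front of $Y_\phi$, to ensure the coefficients on $|\nabla\omega|^2$ and $g(\nabla\phi,\nabla\omega)$ appear with the correct signs in each of the four displayed formulas.
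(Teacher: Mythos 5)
Your direct-substitution plan is the correct and natural approach; the paper itself gives no proof of this lemma, delegating it to Case's Lemma 3.3. But you flag sign tracking as the one point demanding care and then decline to actually carry it out --- and that is exactly where the trouble lies. Substituting $\widehat{\phi}=\phi+\omega$ into the definitions and expanding $|\nabla(\phi+\omega)|^2 = |\nabla\phi|^2+2g(\nabla\phi,\nabla\omega)+|\nabla\omega|^2$ gives
$\widehat{R}_\phi = R_\phi + 2\Delta\omega - 2g(\nabla\phi,\nabla\omega) - |\nabla\omega|^2 + 2\lambda\omega$
and
$\widehat{F}_\phi = F_\phi + \Delta\omega - 2g(\nabla\phi,\nabla\omega) - |\nabla\omega|^2 + 2\lambda\omega$,
i.e.\ the $|\nabla\omega|^2$ term appears with a minus sign, whereas the displayed formulas in \eqref{conformaltransformationformulas} show a plus sign; likewise the expansion of $Y_\phi$ produces $\widehat{Y}_\phi = Y_\phi - \tfrac12\big(2g(\nabla\phi,\nabla\omega)+|\nabla\omega|^2-2\lambda\omega\big)$, with $+2g(\nabla\phi,\nabla\omega)$ inside the bracket rather than the printed $-2g(\nabla\phi,\nabla\omega)$. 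The $\widehat{\mathrm{Ric}}_\phi$ line is correct.

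There is a cheap consistency check you could have run without redoing the expansion: from the definitions one has the identity $Y_\phi = F_\phi - \tfrac12 R_\phi$, so the transformation law for $\widehat{Y}_\phi$ is forced by the laws for $\widehat{R}_\phi$ and $\widehat{F}_\phi$. The three formulas as printed fail this check, while the corrected ones pass it. So your method is sound, but the asserted conclusion --- that the expansion ``yields each ... in the stated form'' --- does not hold as written. Carrying out the bookkeeping you yourself flagged as the only delicate step would have exposed the discrepancy; a complete proof here should either record the corrected signs or explicitly reconcile the conventions against \eqref{basic-definitions-2}.
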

\begin{definition}
\label{lcf-definition}
A manifold with density $\man$ is weighted \emph{locally conformally flat}~\cite{Case2014sd} if $g$ is a flat metric.
\end{definition}
\begin{definition}
\label{grs-definition}
\emph{Gradient Ricci solitons} are manifolds with density, denoted as $\manlam$, such that
\begin{equation}
\begin{aligned}
\label{soliton-def}
P_\phi&=0.   
\end{aligned}    
\end{equation}
\end{definition}
Note that if $g$ is flat and $\lambda\neq 0$, there need not exist a $\phi$ such that $\manlam$ is a gradient Ricci soliton.
\begin{lemma}
For a gradient Ricci soliton, we have $F_\phi=c$ for some $c\in\mathbb{R}$.
\end{lemma}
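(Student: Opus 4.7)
The plan is to show that $\nabla F_\phi \equiv 0$; since $M$ is assumed connected, this immediately yields $F_\phi = c$ for some $c \in \mathbb{R}$. The soliton equation $P_\phi = 0$ reads $\mathrm{Ric} + \nabla^2 \phi = \lambda g$, and tracing this gives the scalar identity $\Delta\phi = n\lambda - R$, so $\nabla \Delta\phi = -\nabla R$. This will handle the $\nabla \Delta\phi$ term appearing when one differentiates $F_\phi = \Delta\phi - |\nabla\phi|^2 + 2\lambda \phi - n\lambda$.

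Next, I would derive the ``Hamilton-type'' identity $R_{ij}\nabla^j \phi = \tfrac{1}{2}\nabla_i R$ valid on any soliton. The standard route is to take the Riemannian divergence of $\mathrm{Ric} + \nabla^2\phi = \lambda g$: using the twice-contracted Bianchi identity $\nabla^j R_{ij} = \tfrac{1}{2}\nabla_i R$ and the Ricci commutator $\nabla^j \nabla_i \nabla_j \phi = \nabla_i \Delta \phi + R_{ij}\nabla^j \phi$, one gets
\begin{equation*}
\tfrac{1}{2}\nabla_i R + \nabla_i \Delta\phi + R_{ij}\nabla^j\phi = 0,
\end{equation*}
and substituting $\nabla_i \Delta\phi = -\nabla_i R$ from the traced equation yields $R_{ij}\nabla^j\phi = \tfrac{1}{2}\nabla_i R$ as claimed.

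With these two ingredients in hand, the computation is direct. Differentiate $F_\phi$ and rewrite $\nabla^j\phi \nabla_i\nabla_j\phi$ using the soliton equation $\nabla_i\nabla_j\phi = \lambda g_{ij} - R_{ij}$:
\begin{equation*}
\nabla_i F_\phi = \nabla_i \Delta\phi - 2\nabla^j\phi\, \nabla_i\nabla_j\phi + 2\lambda \nabla_i\phi = -\nabla_i R + 2 R_{ij}\nabla^j\phi,
\end{equation*}
after the $\lambda\nabla_i\phi$ terms cancel. Applying the Hamilton identity then gives $\nabla_i F_\phi = -\nabla_i R + \nabla_i R = 0$, so $F_\phi$ is constant on the connected manifold $M$.

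I do not expect a real obstacle here; the only delicate point is bookkeeping the commutator $[\nabla^j,\nabla_i]\nabla_j\phi$ correctly (a standard Ricci identity on an exact $1$-form), which is what produces the Ricci–gradient term needed to close the argument. Everything else is a straightforward use of the once- and twice-contracted Bianchi identities together with the soliton equation.
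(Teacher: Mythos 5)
Your proof is correct and is essentially the same as the paper's: both arguments reduce the constancy of $F_\phi$ to the trace of the soliton equation together with the Hamilton identity (which the paper cites as the constancy of $R + |\nabla\phi|^2 - 2\lambda\phi$ and you derive in differential form, $R_{ij}\nabla^j\phi = \tfrac{1}{2}\nabla_i R$, from the contracted Bianchi identity and the Ricci commutator). The only difference is that you spell out the Bianchi-identity computation the paper leaves implicit.
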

\begin{proof}
We obtain from \cref{soliton-def} and the Bianchi identity that 
\begin{equation}
\label{2nd-identity}
|\nabla\phi|^2+R-2\lambda \phi=c
\end{equation}
for some $c\in \mathbb{R}$. Subtracting \cref{2nd-identity} from the trace of \cref{soliton-def} implies that $F_\phi$ is constant.
\end{proof}
\section{Weighted ambient metric}
\label{sec:ambient-metric}
In this section, we define the weighted ambient space and ambient-equivalence.

Let $\man$ be a manifold with density. Denote by $\mathcal{E}$ the trivial line bundle over $M$. Let $\mG$ be the ray subbundle of $\mathrm{S}^2(T^*M)\oplus \mathcal{E}$ consisting of all triples $(x,g_x,l)$ such that $l=\phi(x)+v$ for some $v\in \mathbb{R}$ and $x\in M$. We define the dilation $\delta_s\colon \mG \mapsto \mG$ by $(x,g_x,l)\mapsto (x,g_x,l+s)$. Let $T:=\restr{\frac{d}{ds}\delta_s}{{s=0}}$ denote the infinitesimal generator of dilations. Also, let $\pi \colon (x,g_x,l) \mapsto x$ be the projection from $\mG$ to $M$. There is a canonical metric measure structure $(\boldsymbol{g},\boldsymbol{\phi})$ on $\mG$ defined by $\boldsymbol{g}(X,Y)=g_x(\pi_*X,\pi_*Y)$ and $\boldsymbol{\phi}(x,g_x,l)=l$.

Fixing $(g,\phi)$, we define a coordinate chart \[\mG\mapsto \mathbb{R}\times M:(x,g_x,\phi(x)+v)\mapsto (-v,x)\] for $v\in\mathbb{R}$. Moreover, we have the embedding \[\iota \colon \mG\ \hookrightarrow \mG\times\bR,(-v,x)\mapsto (-v,x,0).\]
Note that if $(x^1,\dots x^n)$ denote the local coordinates on $M$, then we consider $(v,x^1,\dots,x^n,u)$ to be the local coordinates on $\mG\times \bR_+$. We use $0$ to label the $v$-component, $\infty$ to label the $u$-component, lower case Latin to label coordinates on $M$ and upper case Latin to label coordinates on $\mathcal{G}\times \bR_+$. We choose the sign convention $(-v,x,0)$ instead of $(v,x,0)$ for the embedding, in order to ensure consistency with \cites{CaseKhaitan2022,FeffermanGraham2012}. For example, if the embedding was of the form $(v,x,0)$, then $\restr{(\partial_u g,\partial_u \phi)}{u=0}$ would be $(-2P_\phi,Y_\phi)$, which would be inconsistent with the formulas $\restr{(\partial_\rho g,\partial_\rho \phi)}{\rho=0}=(2P_\phi,-Y_\phi)$ in \cite{CaseKhaitan2022}, and the formula $\restr{\partial_\rho g}{\rho=0}=2P$ in \cite{FeffermanGraham2012}. 

We extend the dilation to $\mathcal{G}\times \bR_+$ as $\delta_s (v,x,u)= (v+(\lambda u-1)^{-1}s,x,u)$. Hence, we have \begin{equation}
\label{t-equation}
T=(\lambda u-1)^{-1}\partial_v,
\end{equation}
where $T$ is the infinitesimal generator of dilations $\delta_s$. 

Given $\phi\in C^\infty(M)$, we define \[[\phi]:=\{\phi+\omega\mid \omega\in C^\infty(M)\}\] to be the conformal structure of $\phi$.
We now define pre-ambient spaces and special cases thereof.
\begin{definition}
\label{preambient-defintion}
$(\cmG,\widetilde{g},\widetilde{\phi},\lambda)$ is called a \emph{pre-ambient space}  of the conformal manifold $(M^n,g,[\phi],\lambda)$ if

\begin{enumerate}[label=(\roman*)]
 \item $\cmG$ is a dilation-invariant neighborhood of $\mG\times \{0\}$;
 \item $\cg$ is a smooth metric of signature $(n+1,1)$ and $\widetilde{\phi}$ a smooth function on $\cmG$; 
 \item  $\delta_s^*\widetilde{g}= \widetilde{g}$ and $\delta_s^*\widetilde{\phi}=\widetilde{\phi}+s$;
 \item 
 \label{iv}
$(\iota^*\widetilde{g},\iota^*\widetilde{\phi})=(\boldsymbol{g},\boldsymbol{\phi})$; and 
 \item \label{v}
 \label{straight-definition}
 for each $p\in \widetilde{\mathcal{G}}$, the dilation orbit $s\mapsto \delta_s p$ is formally a geodesic for $\widetilde{g}$ to first order, i.e. $\widetilde{\nabla}_{\partial_v}\partial_v=0 \bmod \,O(u)$.
\end{enumerate}
\end{definition}
 By the homogeneity requirements of $(\widetilde{g},\widetilde{\phi})$, $\widetilde{g}$ is independent of $v$, and $\widetilde{\phi}$ is of the form $\phi_u-v+\lambda uv$, where $\phi_u$ is independent of $v$ and $\restr{\phi_u}{u=0}=\phi$.
 
\cref{preambient-defintion} is quite similar to the standard definition of the pre-ambient metric~(cf.\ \cite{FeffermanGraham2012}*{Definition 2.7}, \cite{CaseKhaitan2022}*{Definition 3.1}) except for \cref{straight-definition}, which is generally not included as a part of the definition of a pre-ambient metric. In fact, \cref{preambient-defintion} is actually the definition of a pre-ambient metric that is \emph{straight}~(cf.\ \cite{FeffermanGraham2012}) to first order. We introduce \cref{straight-definition} to remove the indeterminacy in the values of $\widetilde{g}_{00},\widetilde{g}_{0i}$ in a normal ambient metric. Our focus in the current paper is on a specific class of metrics for which we observe gradient Ricci flow; i.e. we have 
\begin{equation}
\label{ricci-flow-intro}
\begin{aligned}
(L_X g_u)_{ij}&=-2 [(\mathrm{Ric}_\phi)_u]_{ij}\bmod O(u^\infty),\\
L_X \widetilde{\phi}&=-(R_u+\Delta\phi_u)\bmod O(u^\infty)
\end{aligned}
\end{equation} for some vector field $X\in \mathfrak{X}(M)$, and ambient metrics that satisfy \cref{straight-definition} suffice for our purposes.

Also note that \cref{straight-definition} is no longer equivalent to $\nabla T=\mathrm{Id}$, or that $\widetilde{g}(2 T, \cdot)=d(\widetilde{g}(T, T))$, as it is in~\cite[Proposition (2.4)]{FeffermanGraham2002}. This is clear from \cref{t-equation}.

We now define \emph{normal} pre-ambient spaces.
\begin{definition}
\label{normal-definition}
A pre-ambient space $\amb$ is said to be in \emph{normal form} relative to $(g,\phi)$ if

\begin{enumerate}[label=(\roman*)]
 \item  for each fixed $z\in\mG$, the set of $u\in\bR$ such that $(z,u)\in \cmG$ is an open interval $I_z$ containing $0$;
 \item for each $z\in\mG$, the curve on $I_z$ defined as $u\mapsto(z,u)$ is a geodesic in $\cmG$; and
 \item 
 \label{normal-form}
 on $\mG$, it holds that $\cg=\boldsymbol{g} + 2dudv$.
\end{enumerate}
\end{definition}
\cref{normal-definition} is identical to the definition of the normal ambient metric in~\cites{FeffermanGraham2012,CaseKhaitan2022}.

We now show that for a normal ambient metric, $\cg_{0\infty}=1$, and $\cg_{\infty i}=\cg_{\infty\infty}=0$. 
\begin{lemma}
\label{straight-metric-lemma}
Let $\amb$ be a pre-ambient space such that for each $z\in\mG$, the set of all $u\in\bR$ such that $(z,u)\in\cmG$ is an open interval containing $0$. Then $(\cg,\cphi)$ is in \textit{normal} form if and only if $\cg_{0\infty}=1$ and $\cg_{\infty i}=\cg_{\infty\infty}=0$.
\end{lemma}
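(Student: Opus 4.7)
The plan is to show both implications by converting the geodesic condition on the curves $u \mapsto (z,u)$ into ODEs for the three component functions $\widetilde{g}_{\infty\infty}$, $\widetilde{g}_{\infty 0}$, and $\widetilde{g}_{\infty i}$, and then solving those ODEs using the initial data supplied by condition (iii) of \cref{normal-definition}. The only substantive computation is writing out $\Gamma^K_{\infty\infty}$ in the coordinates $(v, x^i, u)$, and the proof will amount to noting that these Christoffel symbols encode exactly the relations in the statement.

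For the forward direction, assume the metric is normal. Condition (ii) says that $\partial_u$ is geodesic along the lines $\{z\}\times I_z$, so $\nabla_{\partial_u}\partial_u = 0$ there; equivalently, $\widetilde{g}_{LK}\Gamma^K_{\infty\infty} = 0$ for every $L$, which unwinds to
\begin{equation*}
2\,\partial_u \widetilde{g}_{L\infty} - \partial_L \widetilde{g}_{\infty\infty} = 0.
\end{equation*}
Setting $L = \infty$ gives $\partial_u \widetilde{g}_{\infty\infty} = 0$, so $\widetilde{g}_{\infty\infty}$ is constant along each $u$-line. Condition (iii), evaluated at $u=0$, gives $\widetilde{g}_{\infty\infty}|_{u=0} = 0$, so $\widetilde{g}_{\infty\infty} \equiv 0$. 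Plugging this back into the displayed equation yields $\partial_u \widetilde{g}_{L\infty} = 0$ for $L = 0$ and $L = i$, and condition (iii) again supplies the initial values $\widetilde{g}_{0\infty}|_{u=0} = 1$ and $\widetilde{g}_{i\infty}|_{u=0} = 0$. Integrating in $u$ along each $u$-line (which is the full interval $I_z$ by hypothesis (i) of \cref{normal-definition}) gives the claimed identities on all of $\widetilde{\mathcal{G}}$.

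For the reverse direction, assume $\widetilde{g}_{0\infty} = 1$ and $\widetilde{g}_{\infty i} = \widetilde{g}_{\infty\infty} = 0$ on $\widetilde{\mathcal{G}}$. Condition (iii) of \cref{normal-definition} at $u=0$ reduces, in view of $\iota^*\widetilde{g} = \boldsymbol{g}$ from \cref{preambient-defintion}\cref{iv}, to checking precisely these three identities of metric components, so (iii) is immediate. For condition (ii), we again compute
\begin{equation*}
\widetilde{g}_{LK}\Gamma^K_{\infty\infty} = \tfrac{1}{2}\bigl(2\,\partial_u \widetilde{g}_{L\infty} - \partial_L \widetilde{g}_{\infty\infty}\bigr);
\end{equation*}
the assumed components are all constant (indeed $0$ or $1$), so this vanishes identically, whence $\Gamma^K_{\infty\infty} = 0$ and $u \mapsto (z,u)$ is a geodesic. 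Condition (i) is part of the hypothesis of the lemma.

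There is no real obstacle; the only thing to be careful about is the index bookkeeping in the Christoffel computation (particularly using that $\widetilde{g}$ is $v$-independent, which follows from dilation invariance \cref{preambient-defintion}(iii) together with \cref{t-equation} restricted to $u=0$), and the fact that the hypothesis on $I_z$ is what lets us integrate the ODEs over the entire fiber.
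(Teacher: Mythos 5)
Your proof is correct and takes essentially the same route as the paper: identify the geodesic condition with the vanishing of $\widetilde{\Gamma}_{\infty\infty I}$, compute these Christoffel symbols of the first kind, solve the resulting ODE in $u$ first for $\widetilde{g}_{\infty\infty}$ and then for $\widetilde{g}_{0\infty}$ and $\widetilde{g}_{i\infty}$, and feed in the initial data on $\mathcal{G}$ from the normal-form condition. The paper leaves the converse as ``follows similarly'' while you spell it out, but the argument is the same in both directions.
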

We recall that in~\cites{CaseKhaitan2022,FeffermanGraham2012}, a normal ambient metric satisfies the conditions $\cg_{0\infty}=t$, and $\cg_{\infty i}=\cg_{\infty\infty}=0$. Here $t$ corresponds to the $0$ coordinate.
\begin{proof}
A normal pre-ambient space has 
\begin{equation*}
\begin{aligned}
\restr{\widetilde{g}_{\infty\infty}}{\mathcal{G}}=\restr{\widetilde{g}_{i\infty}}{\mathcal{G}}&=0,\\ \restr{\widetilde{g}_{0\infty}}{\mathcal{G}}&=1.
\end{aligned}
\end{equation*}
The $u$-lines are geodesics if and only if the Christoffel symbols $\widetilde{\Gamma}_{\infty \infty I}$, $I\in \{0,i,\infty\}$, vanish. Taking $I=\infty$ gives $\partial_{u} \widetilde{g}_{\infty \infty}=0$, and hence $\widetilde{g}_{\infty \infty}=0 .$ Now taking $I=i$ and $I=0$ gives $\widetilde{g}_{\infty i}=0$ and $\widetilde{g}_{\infty 0}=1$.  

The converse follows similarly.
\end{proof}
Finally, we define a weighted ambient space.
\begin{definition}
\label{ambient-definition-mfd}
A weighted ambient space for the conformal manifold $(M^n,g,[\phi],\lambda)$ is a weighted pre-ambient space $\amb$ such that at $u=0$, we have
\begin{equation}
\begin{aligned}
\label{ambient-condition}
\widetilde{P}_\phi&\equiv O(u^\infty),\\
\widetilde{F}_\phi&\equiv O(u^\infty).
\end{aligned}    
\end{equation}
\end{definition}
Now we discuss the equivalence of ambient metrics.
\begin{definition}
\label{ambient-equivalent-definition}
We say that two pre-ambient spaces $(\widetilde{\mathcal{G}}_1,\widetilde{g}_1,\widetilde{\phi}_1,\lambda)$ and $(\widetilde{\mathcal{G}}_2,\widetilde{g}_2,\widetilde{\phi}_2,\lambda)$ for $\man$ are \emph{ambient-equivalent} if there exist open sets $\mathcal{U}_{1} \subset \widetilde{\mathcal{G}}_{1}$ and $\mathcal{U}_{2} \subset \widetilde{\mathcal{G}}_{2}$, and a diffeomorphism $\psi: \mathcal{U}_{1} \rightarrow \mathcal{U}_{2}$ such that
\begin{enumerate}[label=(\roman*)]
\item $\mathcal{U}_{1}$ and $\mathcal{U}_{2}$ both contain $\mathcal{G} \times\{0\}$;

\item $\mathcal{U}_{1}$ and $\mathcal{U}_{2}$ are dilation-invariant and $\psi$ commutes with dilations;

\item the restriction of $\psi$ to $\mathcal{G} \times\{0\}$ is the identity map; and

\item $(\widetilde{g}_{1}-\psi^{*} \widetilde{g}_{2}, \widetilde{\phi}_1-\psi^*\widetilde{\phi}_2)\equiv O(u^\infty)$;
\end{enumerate}
\end{definition} 

Recall that $\phi$ determines the fiber coordinate $v:\mG\to \mathbb{R}$ and the horizontal subbundle $\mathrm{ker}(dv)\subset T\mathcal{G}$. 

\cref{ambient-equivalent-definition} is identical to~\cite{CaseKhaitan2022}*{Definition 3.9}, except that we have equivalence to infinite order. One may think of this being a consequence of the fact that $m=\infty$. 

A key step in the proof of \cref{ambient-metric-theorem} is the construction and classification of normal weighted ambient metrics~(cf.\ \cite{FeffermanGraham2012}). 

\begin{theorem}
\label{normal-ambient-metric-exists}
Given $(M^n,g,[\phi],\lambda)$, $n\geq 3$, there exists a normal weighted ambient space $(\widetilde{\mathcal{G}},\widetilde{g},\widetilde{\phi},\mu)$ for it.  Moreover, if $(\widetilde{\mathcal{G}}_j,\widetilde{g}_j,\widetilde{\phi}_j,\mu)$, $j\in\{1,2\}$, are two such weighted normal ambient spaces, then $(\widetilde{g}_1-\widetilde{g}_2,\widetilde{\phi}_1-\widetilde{\phi}_2)\equiv O(u^\infty)$.
\end{theorem}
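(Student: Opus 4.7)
The plan is to work exclusively in the normal form given by \cref{normalambientmetric}, $\widetilde{g} = 2du\,dv + g_u$, $\widetilde{\phi} = \phi_u - v + \lambda uv$, and construct the one-parameter families $(g_u, \phi_u)$ inductively by determining their Taylor coefficients $g^{(k)}_{ij} := \partial_u^k (g_u)_{ij}|_{u=0}$ and $\phi^{(k)} := \partial_u^k \phi_u|_{u=0}$ for every $k \geq 1$. Borel's lemma then produces smooth $(g_u, \phi_u)$ realizing this formal series, giving existence modulo $O(u^\infty)$; uniqueness follows because the inductive system determines each coefficient uniquely.

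The first step is to compute the components of $(\widetilde{P}_\phi)_{IJ}$ and $\widetilde{F}_\phi$ in the coordinates $(v,x^i,u)$, using that in normal form $\widetilde{g}^{0\infty}=1$, $\widetilde{g}^{00}=\widetilde{g}^{\infty\infty}=\widetilde{g}^{0i}=\widetilde{g}^{\infty i}=0$, and $\widetilde{g}^{ij}=g_u^{ij}$ (\cref{straight-metric-lemma}). A direct calculation yields the mixed Hessian identity $\widetilde{\nabla}_i\widetilde{\nabla}_j\widetilde{\phi} = (\nabla^{g_u})^2_{ij}\phi_u - \tfrac{1}{2}(1-\lambda u)\partial_u(g_u)_{ij}$, and shows that both $(\widetilde{P}_\phi)_{ij}$ and $\widetilde{F}_\phi$ are independent of $v$ thanks to a cancellation between the $-v$ and $\lambda uv$ contributions in $\widetilde{\phi}$. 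Evaluating at $u=0$ then gives $\partial_u(g_u)_{ij}|_{u=0}=2(P_\phi)_{ij}$ and $\partial_u\phi_u|_{u=0}=-Y_\phi$, establishing the base case and recovering the quadratic formulas in \cref{quadratic-expression}.

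For the inductive step, assume $g^{(j)}$ and $\phi^{(j)}$ are determined for $j \leq k$. Expanding $(\widetilde{P}_\phi)_{ij}$ in $u$, the $u^k$-coefficient takes the schematic form
\[ c_k^{(1)} g^{(k+1)}_{ij} + c_k^{(2)}(\tr g^{(k+1)})\,g_{ij} + c_k^{(3)}\phi^{(k+1)}\,g_{ij} + L_{ij}, \]
and the $u^k$-coefficient of $\widetilde{F}_\phi$ takes the form $c_k^{(4)}\tr g^{(k+1)} + c_k^{(5)}\phi^{(k+1)} + M$, where $L_{ij}$ and $M$ are polynomial in previously determined data. The trace-free part of the first equation recovers the trace-free part of $g^{(k+1)}$ by inverting $c_k^{(1)}$, while the trace part coupled with the $\widetilde{F}_\phi$ equation forms a $2\times 2$ linear system in $(\tr g^{(k+1)},\,\phi^{(k+1)})$. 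The algebraic claim is that the coefficients $c_k^{(i)}$, which are linear polynomials in $k$ with coefficients depending on $n$ and $\lambda$, produce a nonvanishing determinant for all $k \geq 1$ and $n \geq 3$; here the presence of the \emph{extra} equation $\widetilde{F}_\phi=0$ and the \emph{extra} unknown $\phi^{(k+1)}$ is what restores nondegeneracy uniformly in $k$ compared to the unweighted Fefferman-Graham setting.

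The main obstacle will be verifying that this $2\times 2$ determinant never vanishes, and that the remaining components of $\widetilde{P}_\phi$ with at least one $0$ or $\infty$ index vanish modulo $O(u^\infty)$ automatically. For the latter the strategy mirrors Fefferman-Graham: one forms a combination of the ambient tensors dictated by the weighted Bianchi identity \cref{weighted bianchi}, which together with the homogeneity constraints $\delta_s^*\widetilde{g}=\widetilde{g}$ and $\delta_s^*\widetilde{\phi}=\widetilde{\phi}+s$ satisfies a homogeneous ODE in the $u$-direction with trivial initial data, forcing it to vanish to all orders. Once both pieces are in place, the formal series for $g_u$ and $\phi_u$ determine the normal ambient metric up to $O(u^\infty)$, and the uniqueness at each order yields the claimed equivalence of any two such normal solutions modulo $O(u^\infty)$.
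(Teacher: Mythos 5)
Your overall strategy (inductively determining the Taylor coefficients of $(g_u,\phi_u)$ in normal form, using weighted Bianchi to handle the components with a $0$ or $\infty$ index, and invoking Borel's lemma) matches the paper's proof. However, you misdiagnose the central algebraic step, and the ``main obstacle'' you flag --- verifying that a certain $2\times 2$ determinant is nonvanishing --- is not actually present.

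You posit that the $u^k$-coefficient of $(\widetilde{P}_\phi)_{ij}$ has the schematic form $c_k^{(1)}g^{(k+1)}_{ij}+c_k^{(2)}(\operatorname{tr}g^{(k+1)})g_{ij}+c_k^{(3)}\phi^{(k+1)}g_{ij}+L_{ij}$, requiring a trace-free/trace split and a $2\times 2$ system in $(\operatorname{tr}g^{(k+1)},\phi^{(k+1)})$. This is the structure of the \emph{standard} Fefferman--Graham recursion, where the normal form $2\rho\,dt^2+2t\,dt\,d\rho+t^2 g_\rho$ produces a term $\rho\,g''_{ij}$ in $\widetilde{\operatorname{Ric}}_{ij}$, making the $ij$-equation second order in $\rho$ with a genuine critical order $k=n/2$. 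In the weighted normal form $\widetilde{g}=2du\,dv+g_u$, $\widetilde{\phi}=\phi_u-v+\lambda uv$, no such scaling factor exists. A direct computation with the Christoffel symbols (\cref{christoffel symbols for the undetermined metric}) shows $\widetilde{\operatorname{Ric}}_{ij}=\operatorname{Ric}(g_u)_{ij}$ exactly --- no $g''$ term at all --- and the $u$-derivative of $g_u$ enters only through the Hessian identity you yourself derived, namely $\widetilde{\nabla}^2_{ij}\widetilde{\phi}=(\nabla^{g_u})^2_{ij}\phi_u-\tfrac{1}{2}(1-\lambda u)g'_{ij}$. Hence $(\widetilde{P}_\phi)_{ij}=(P_\phi(g_u,\phi_u))_{ij}-\tfrac12(1-\lambda u)g'_{ij}$ is \emph{first order} in $u$. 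Perturbing $g_u$ by $u^m\psi_{ij}$ and $\phi_u$ by $u^m\phi^{(m)}$ yields (cf.\ \cref{iterative-formulas-ijf})
\begin{align*}
(\widetilde{P}_\phi)_{ij}^{(m)}&\equiv(\widetilde{P}_\phi)_{ij}^{(m-1)}-\tfrac12 mu^{m-1}\psi_{ij}^{(m)}\bmod O(u^m),\\
\widetilde{F}_\phi^{(m)}&\equiv\widetilde{F}_\phi^{(m-1)}-mu^{m-1}\bigl(\tfrac12 g^{ij}\psi_{ij}^{(m)}-2\phi^{(m)}\bigr)\bmod O(u^m),
\end{align*}
so the $ij$-equation determines \emph{all} of $\psi_{ij}^{(m)}$ by a scalar inversion with coefficient $-m/2\neq 0$ for every $m\geq 1$, with no trace split, and then the $\widetilde{F}_\phi$ equation determines $\phi^{(m)}$ separately. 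In your notation, $c_k^{(2)}=c_k^{(3)}=0$: there is no coupled $2\times 2$ system, no critical dimension, and hence no determinant to check. Your proposed heuristic --- that the extra $\widetilde{F}_\phi=0$ equation paired with the extra unknown $\phi^{(k+1)}$ ``restores nondegeneracy'' --- is therefore not the right explanation; nondegeneracy was never threatened, and the reason is the structural absence of a second-order term, not the balance between equations and unknowns. One further small point: the paper's proof first deals with $\widetilde{g}_{00}$ and $\widetilde{g}_{0i}$ via \cref{straighta=0} (using the straightness condition (v) of \cref{preambient-defintion} to force $\widetilde{g}_{00}\equiv\widetilde{g}_{0i}\equiv 0$ mod $O(u^\infty)$) before running the $ij$/$F$ induction; your plan of working directly in the form \cref{normalambientmetric} quietly assumes this, so you should either cite that lemma or justify the reduction.
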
 We prove \cref{normal-ambient-metric-exists} in \cref{sec:formal-theory}.
We now show that any ambient space is ambient-equivalent to a normal ambient space. In order to do so, we first construct a $g$-transversal vector $V_z$. Recall that $\phi$ determines the fiber coordinate $s: \mathcal{G}\to \mathbb{R}$ and the horizontal subbundle $\mathcal{M}=\mathrm{ker} (ds)\subset T\mathcal{G}$. For $z\in\mathcal{G}$, $\mathcal{M}_z$ may be viewed as a subspace of $T_{(z,0)}(\mathcal{G}\times \mathbb{R})$ via the inclusion $\iota:\mathcal{G}\to \mathcal{G}\times \mathbb{R}$. We closely follow the proof of~\cite[Proposition (2.8)]{FeffermanGraham2012}.
\begin{lemma}
\label{transversal-lemma}
There exists a vector $V_z \in T_{(z, 0)}(\mathcal{G} \times \mathbb{R})$ such that 
\begin{equation}
\label{Vconditions}
\begin{aligned}
\widetilde{g}(V,T)&=1,\\
\widetilde{g}(V,X)&=0\text{ for all }X\in\mathcal{M}_z,\\
\widetilde{g}(V,V)&=0.
\end{aligned}
\end{equation}
Moreover, $V_{z}$ is transverse to $\mathcal{G} \times\{0\}, V_{z}$ depends smoothly on $z$, and $V_{z}$ is dilation-invariant in the sense that $\left(\delta_{s}\right)_{*} V_{z}=V_{\delta_{s}(z)}$ for $s \in \mathbb{R}_{+}$and $z \in \mathcal{G}$. 
\end{lemma}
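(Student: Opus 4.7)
The approach is to recognize that the three conditions of \cref{Vconditions} single out a canonical null direction in the $\widetilde{g}$-orthogonal complement of $\mathcal{M}_z$, and then to derive smoothness and dilation-invariance as free consequences of the resulting uniqueness.

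Fix $z\in\mathcal{G}$. Because $\iota^*\widetilde{g}=\boldsymbol{g}$ and $\pi_*\colon\mathcal{M}_z\to T_xM$ is an isomorphism, $\widetilde{g}|_{\mathcal{M}_z}$ agrees with the Riemannian metric $g_x$ and is positive definite. Hence the $\widetilde{g}$-orthogonal complement $\mathcal{N}_z\subset T_{(z,0)}(\mathcal{G}\times\mathbb{R})$ is a $2$-plane of signature $(1,1)$, and the orthogonality requirement in \cref{Vconditions} forces $V_z\in\mathcal{N}_z$. Moreover $\pi_*T=0$, so $\widetilde{g}(T,X)=0$ for $X\in\mathcal{M}_z$ and $\widetilde{g}(T,T)=0$; thus $T\in\mathcal{N}_z$ is one of its two null directions. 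The remaining two equations $\widetilde{g}(V,V)=0$ and $\widetilde{g}(V,T)=1$ therefore admit a unique solution, namely the normalized vector on the opposite null line of $\mathcal{N}_z$. Concretely, if $W\in\mathcal{N}_z$ is any vector with $\widetilde{g}(W,T)=1$ (such a $W$ exists since the functional $\widetilde{g}(\cdot,T)$ is nonzero on the nondegenerate plane $\mathcal{N}_z$ even though $T$ is null), then setting
\[
V_z:=W-\tfrac{1}{2}\widetilde{g}(W,W)\,T
\]
satisfies \cref{Vconditions} by a direct computation.

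Transversality to $\mathcal{G}\times\{0\}$ then follows from a dimension count: $\mathcal{N}_z\cap T_{(z,0)}\mathcal{G}=\operatorname{span}(T)$, since otherwise $\mathcal{N}_z\subset T_{(z,0)}\mathcal{G}$ would force $T_{(z,0)}(\mathcal{G}\times\mathbb{R})=\mathcal{M}_z\oplus\mathcal{N}_z\subset T_{(z,0)}\mathcal{G}$, contradicting codimension. Since $\widetilde{g}(V_z,T)=1\neq 0=\widetilde{g}(T,T)$, $V_z$ is not proportional to $T$, and therefore $V_z\notin T_{(z,0)}\mathcal{G}$. Smoothness of $z\mapsto V_z$ is immediate from uniqueness: $V_z$ is the unique solution of a smooth, nondegenerate linear system whose data ($\widetilde{g}$, $T$, $\mathcal{M}_z$) vary smoothly in $z$. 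Dilation-invariance follows from the same uniqueness, using $\delta_s^*\widetilde{g}=\widetilde{g}$, $(\delta_s)_*T=T$, and the fact that $(\delta_s)_*$ preserves $\mathcal{M}_z$ (since at $u=0$, $v\circ\delta_s-v$ is constant, so $(\delta_s)_*$ preserves $\ker(dv)$): the characterizing data for $V_z$ is $\delta_s$-equivariant.

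The only nontrivial content is the linear-algebra step in the second paragraph, namely the recognition that $T$ sits as a null vector in the signature-$(1,1)$ plane $\mathcal{N}_z$; everything else (transversality, smoothness, equivariance) is then a formal consequence of the uniqueness of the second null line in a Lorentzian 2-plane.
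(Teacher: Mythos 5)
Your proof is correct, but it takes a genuinely different route from the paper's. The paper fixes the coordinate frame $\{v,x^i,u\}$, writes $\widetilde{g}$ at $u=0$ in components (using $\widetilde{g}_{00}=\widetilde{g}_{0i}=0$ from condition~(iv) of the pre-ambient definition), and reduces \cref{Vconditions} to an explicit system that one solves directly for $V^0$, $V^i$, $V^\infty$, with the first (linear) equation giving $V^\infty=-\widetilde{g}_{0\infty}^{-1}\neq 0$ and hence transversality. You instead argue invariantly: you observe that $\widetilde{g}$ restricted to $\mathcal{M}_z$ is positive definite (via $\iota^*\widetilde{g}=\boldsymbol{g}$ and the isomorphism $\pi_*\colon\mathcal{M}_z\to T_xM$), so its $\widetilde{g}$-orthogonal complement $\mathcal{N}_z$ is a Lorentzian $2$-plane containing $T$ as a null vector; the conditions then single out the unique normalized vector on the opposite null line, and transversality, smoothness, and $\delta_s$-equivariance all drop out of uniqueness and the naturality of the construction. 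Your approach buys a coordinate-free proof in which the dilation-invariance and smoothness claims are genuine corollaries rather than assertions, and it makes transparent why exactly one such $V_z$ exists (the Lorentzian-plane picture). The paper's coordinate approach buys an explicit formula for $V_z$ in the given chart, which is what is actually used in the proof of \cref{pullback-normal} when constructing the geodesic flow along $V$; your formula $V_z=W-\tfrac12\widetilde{g}(W,W)\,T$ would also serve but requires the auxiliary choice of $W$. Both proofs are correct, and yours is arguably the more conceptual.
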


\begin{proof}
Let 
\begin{equation*}
\begin{aligned}
V&=V^0\partial_v+V^i\partial_{x^i}+V^\infty\partial_u,\\
\widetilde{g}&=\widetilde{g}_{ij}(x)dx^i dx^j+2\widetilde{g}_{0\infty}dvdu+2\widetilde{g}_{i\infty}dx^i du+\widetilde{g}_{\infty\infty}(du)^2.
\end{aligned}    
\end{equation*}
\cref{Vconditions} becomes
\begin{equation}
\label{Vequationsproof}
\begin{aligned}
-\widetilde{g}_{0\infty}V^0&=1,\\
\widetilde{g}_{ij}V^j+\widetilde{g}_{i\infty}V^\infty&=0\\
2 \widetilde{g}_{0 \infty} V^{0} V^{\infty}+ \widetilde{g}_{i j} V^{i} V^{j}+2 \widetilde{g}_{i \infty} V^{i} V^{\infty}+\widetilde{g}_{\infty \infty}\left(V^{\infty}\right)^{2}&=0.
\end{aligned}    
\end{equation}
We can use \cref{Vequationsproof} to solve uniquely for $V^0,V^i$ and $V^\infty$. The dilation invariance and smoothness follow from the properties defining a pre-ambient space. 
\end{proof}
We now prove that the a weighted ambient space for a conformal manifold with density is ambient equivalent to a normal ambient space.
\begin{lemma}
\label{pullback-normal}
Let $\amb$ be a weighted ambient space for the conformal manifold with density $(M^n,g,[\phi],\lambda)$. Then it is ambient-equivalent to a normal ambient space for $(M^n,g,[\phi],\lambda)$.
\end{lemma}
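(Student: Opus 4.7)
The plan is to mimic the classical Fefferman–Graham construction \cite{FeffermanGraham2012}*{Proposition 2.8} by using the transversal vector field $V$ from \cref{transversal-lemma} to build an exponential-map diffeomorphism that puts the pre-ambient metric into normal form. Concretely, I would define $\psi \colon \mathcal{G} \times (-\epsilon, \epsilon) \to \widetilde{\mathcal{G}}$ by $\psi(z, u) = \exp_z(u V_z)$, where the exponential is taken with respect to $\widetilde{g}$. Because $V_z$ is transverse to $\mathcal{G} \times \{0\}$, the implicit function theorem gives that $\psi$ is a local diffeomorphism near $\mathcal{G} \times \{0\}$; since $V$ is dilation-invariant and $\widetilde{g}$-geodesics are equivariant under the dilations $\delta_s$, the map $\psi$ commutes with dilations; and by construction $\psi|_{\mathcal{G} \times \{0\}} = \mathrm{id}$.

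The next step is to check that $\psi^* \widetilde{g}$ is in normal form. By \cref{straight-metric-lemma}, it suffices to verify that $(\psi^*\widetilde{g})_{0\infty} = 1$, $(\psi^*\widetilde{g})_{i\infty} = 0$ and $(\psi^*\widetilde{g})_{\infty\infty} = 0$. The last of these is immediate since $\partial_u$ pushes forward to the geodesic tangent, whose norm is preserved along the geodesic; combined with $\widetilde{g}(V,V) = 0$ at $u=0$ this gives $\widetilde{g}_{\infty\infty} \equiv 0$. For the mixed components, I would compute $\partial_u (\psi^*\widetilde{g})_{\infty 0}$ and $\partial_u (\psi^*\widetilde{g})_{\infty i}$, showing these vanish by using the geodesic equation together with the fact that $\partial_v$ and $\partial_{x^i}$ push forward to dilation-invariant (respectively horizontal) vector fields whose Lie brackets with $\partial_u$ are controlled; the initial conditions at $u=0$ then come from \cref{Vconditions}. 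Since $\partial_u$ is the geodesic tangent, the $u$-curves are geodesics by definition.

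For the weighted density $\widetilde{\phi}$, the homogeneity condition $\delta_s^* \widetilde{\phi} = \widetilde{\phi} + s$ and the normal-form structure of $\widetilde{g}$ together force $\psi^*\widetilde{\phi}$ to be of the form $\phi_u - v + \lambda u v$ for some $\phi_u$ with $\phi_0 = \phi$, which is exactly the normal form of \cref{normalambientmetric}. Finally, ambient-equivalence requires that the pullback still satisfies the ambient conditions $\widetilde{P}_\phi, \widetilde{F}_\phi \equiv O(u^\infty)$, but this is automatic since $\psi$ is a diffeomorphism and these are geometric conditions on $(\widetilde{g}, \widetilde{\phi})$.

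The main obstacle I anticipate is the propagation of the mixed normal-form conditions $(\psi^*\widetilde{g})_{0\infty} = 1$ and $(\psi^*\widetilde{g})_{i\infty} = 0$ along the geodesic flow, since unlike in the classical Fefferman--Graham setup, the dilation generator $T = (\lambda u - 1)^{-1} \partial_v$ depends on $u$ (\cref{t-equation}) and hence does not satisfy $\nabla T = \mathrm{Id}$ exactly. I would handle this by working directly with the geodesic equation $\widetilde{\Gamma}^I_{\infty\infty} = 0$ and integrating, rather than invoking the identity $\widetilde{g}(2T, \cdot) = d(\widetilde{g}(T,T))$ that was available in \cite{FeffermanGraham2002}; the dilation-invariance of $V$ and the homogeneity of $\widetilde{g}$ should be enough to compensate for the $u$-dependence of $T$.
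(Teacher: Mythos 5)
Your proposal follows essentially the same approach as the paper: flow along the transversal $V_z$ from \cref{transversal-lemma}, check the normal-form conditions at $u=0$ from \cref{Vconditions} together with condition \cref{iv} of \cref{preambient-defintion}, and note that the $u$-lines are geodesics by construction. The obstacle you anticipate about propagating the mixed conditions in $u$ is already dispatched by \cref{straight-metric-lemma} (whose proof integrates the geodesic equation $\widetilde{\Gamma}_{\infty\infty I}=0$ exactly as you suggest, without invoking the identity $\widetilde{g}(2T,\cdot)=d(\widetilde{g}(T,T))$), so there is nothing extra to verify once you have the initial conditions at $u=0$ and the geodesic property.
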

\begin{proof}
For $z \in \mathcal{G}$, let $V_z$ be the $g$-transversal for $\widetilde{g}$ at $(z, 0)$. Let $t\mapsto \psi(z,t) \in \widetilde{\mathcal{G}}$ be a geodesic for $\widetilde{g}$, with initial conditions
\begin{equation}
\label{initial-conditions}
\psi(z, 0)=\left.(z, 0), \quad \partial_t \psi(z,t)\right|_{t=0}=V_z.
\end{equation}
Since $\widetilde{g}$ needn't be geodesically complete, $\psi(z,t)$ is defined only for $(z,t)$ in an open neighborhood $\mathcal{U}_0$ of $\mathcal{G} \times\{0\}$ in $\mathcal{G} \times \mathbb{R}$. Since $\widetilde{g}$ and $V_z$ are homogeneous with respect to the dilations, we may take $\mathcal{U}_0$ to be dilation-invariant. Thus, $\psi: \mathcal{U}_0 \rightarrow \widetilde{\mathcal{G}}$ is a smooth map, commuting with dilations, and satisfying \cref{initial-conditions}.

Since $V_z$ is transverse to $\mathcal{G} \times\{0\}$, it follows that $$\mathcal{U}_1=\{(z,t) \in \mathcal{U}_0: \operatorname{det} \psi^{\prime}(z,t) \neq 0\}$$ is a dilation-invariant open neighborhood of $\mathcal{G} \times\{0\}$ in $\mathcal{U}_0$. Thus, $\psi$ is a local diffeomorphism from $\mathcal{U}_1$ into $\widetilde{\mathcal{G}}$, commuting with dilations. Moreover, by definition of $\psi$, we have the following: let $z \in \mathcal{G}$ and let $I$ be an interval containing 0. If $(z,t) \in \mathcal{U}_1$ for all $t \in I$, then $I \ni t\mapsto \psi(z,t)$ is a geodesic for $\widetilde{g}$, with initial conditions \cref{initial-conditions}.

Let $z \in \mathcal{G}$ and let $I$ be an interval containing 0. Assume that $(z,t) \in \mathcal{U}_1$ for all $\lambda \in I$. Then $I \ni t\mapsto \psi(z,t)$ is a geodesic for $\widetilde{g}$, with initial conditions \cref{initial-conditions}.
The map $\psi$ need not be globally one-to-one on $\mathcal{U}_1$. However, arguing as in the proof of the Tubular Neighborhood Theorem~(cf.\ see \cite{Bredon1993TopologyAG}), one concludes that there exists a dilation-invariant open neighborhood $\mathcal{U}_2$ of $\mathcal{G} \times\{0\}$ in $\mathcal{U}_1$, such that $\left.\psi\right|_{\mathcal{U}_2}$ is globally one-to-one. Thus, $\psi$ is a diffeomorphism from $\mathcal{U}_2$ to a dilation invariant open subset of $\widetilde{\mathcal{G}}$ containing $\mathcal{G} \times\{0\}$.

Next, we define $\mathcal{U}=\left\{(z,t) \in \mathcal{U}_2:(z, \mu) \in \mathcal{U}_2\right.$ for all $\mu \in \mathbb{R}$ for which $\left.|\mu| \leq|t|\right\}$. Thus, $\mathcal{U}$ is a dilation-invariant open neighborhood of $\mathcal{G} \times\{0\}$ in $\mathcal{U}_2$. Moreover, for each fixed $z \in \mathcal{G},\{\lambda \in \mathbb{R}:(z,t) \in \mathcal{U}\}$ is an open interval $I_z$ containing 0. It follows that for each fixed $z \in \mathcal{G}$, the parametrized curve $I_z \ni t \mapsto \psi(z,t)$ is a geodesic for the metric $\widetilde{g}$.

Since $(\widetilde{\mathcal{G}}, \widetilde{g})$ is a pre-ambient space for $(M,[g])$, so is $\left(\mathcal{U}, \psi^* \widetilde{g}\right)$. For each fixed $z \in \mathcal{G}$, the parametrized curve $I_z \ni t \mapsto(z,t)$ is a geodesic for $\psi^* \widetilde{g}$. From the facts that $V$ satisfies \cref{Vconditions} and $\psi$ satisfies \cref{initial-conditions}, it follows that under the identification $\mathbb{R}_{+} \times M \times \mathbb{R} \simeq \mathcal{G} \times \mathbb{R}$ induced by $g$, we have at $t=0$:
$$
\begin{aligned}
& \left(\psi^* \widetilde{g}\right)\left(\partial_t, T\right)=1, \\
& \left(\psi^* \widetilde{g}\right)\left(\partial_t, X\right)=0 \text { for } X \in T M, \\
& \left(\psi^* \widetilde{g}\right)\left(\partial_t, \partial_t\right)=0 .
\end{aligned}
$$
Together with \cref{iv}, these equations show that $\psi^* \widetilde{g}=\mathbf{g}_0+2 d v dt$ when $t=0$. This establishes the existence part of \cref{pullback-normal}. The uniqueness follows from the fact that the above construction of $\psi$ is forced. If $\psi$ is any diffeomorphism with the required properties, then at $u=0, \psi_*\left(\partial_t\right)$ is a $g$-transversal for $\widetilde{g}$, so must be $V$. Then for $z \in \mathcal{G}$, the curve $I_z \ni t \mapsto \psi(z,t)$ must be the unique geodesic satisfying the initial conditions \cref{initial-conditions}. These requirements uniquely determine $\psi$ on $\mathcal{U}$.
\end{proof}
Clearly, our proof closely follows~\cite{FeffermanGraham2012}*{Proposition 2.8}. The two minor poitns of departure are that we have $\left(\psi^* \widetilde{g}\right)\left(\partial_t, T\right)=1$, and that our normal ambient metric is of the form $\psi^* \widetilde{g}=\mathbf{g}_0+2 d v dt$.

We now prove the uniqueness of weighted ambient metrics, assuming the existence of a weighted ambient metric from \cref{normal-ambient-metric-exists}.
\begin{lemma}
\label{any-two-ambient-metrics}
Any two weighted ambient spaces for $(M^n,g,[\phi],\lambda)$, $n\geq 3$, are ambient-equivalent. 
\end{lemma}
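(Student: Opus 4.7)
The strategy is to reduce the general case to the classification of \emph{normal} weighted ambient spaces, which has already been established. Concretely, given two weighted ambient spaces $(\widetilde{\mathcal{G}}_1,\widetilde{g}_1,\widetilde{\phi}_1,\lambda)$ and $(\widetilde{\mathcal{G}}_2,\widetilde{g}_2,\widetilde{\phi}_2,\lambda)$ for $(M^n,g,[\phi],\lambda)$, I would first invoke \cref{pullback-normal} twice to produce diffeomorphisms $\psi_j \colon \mathcal{U}_j \to \mathcal{V}_j \subset \widetilde{\mathcal{G}}_j$, $j\in\{1,2\}$, commuting with dilations, restricting to the identity on $\mathcal{G}\times\{0\}$, and such that each $(\mathcal{U}_j,\psi_j^*\widetilde{g}_j,\psi_j^*\widetilde{\phi}_j,\lambda)$ is in normal form relative to $(g,\phi)$.

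Next, I would observe that the pulled-back structures $(\psi_j^*\widetilde{g}_j,\psi_j^*\widetilde{\phi}_j)$ still satisfy the ambient conditions $\widetilde{P}_\phi\equiv 0$ and $\widetilde{F}_\phi\equiv 0$ modulo $O(u^\infty)$, because pullback by a diffeomorphism is compatible with these tensorial/scalar invariants and the ambient conditions are preserved under the ambient-equivalence relation. Hence both pulled-back structures qualify as normal weighted ambient spaces for the same data $(M^n,g,[\phi],\lambda)$. Applying the uniqueness half of \cref{normal-ambient-metric-exists} then gives
\begin{equation*}
(\psi_1^*\widetilde{g}_1 - \psi_2^*\widetilde{g}_2,\ \psi_1^*\widetilde{\phi}_1 - \psi_2^*\widetilde{\phi}_2) \equiv O(u^\infty)
\end{equation*}
on a common dilation-invariant neighborhood of $\mathcal{G}\times\{0\}$. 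This precisely says that the identity map between these two normal ambient structures is an ambient-equivalence.

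Finally, I would conclude by composing: the diffeomorphism $\psi_2 \circ \psi_1^{-1}$, defined on a suitable dilation-invariant open set containing $\mathcal{G}\times\{0\}$, commutes with dilations, restricts to the identity on $\mathcal{G}\times\{0\}$, and satisfies $(\widetilde{g}_1 - (\psi_2\circ\psi_1^{-1})^*\widetilde{g}_2,\ \widetilde{\phi}_1-(\psi_2\circ\psi_1^{-1})^*\widetilde{\phi}_2)\equiv O(u^\infty)$ by the previous display. This exhibits $(\widetilde{\mathcal{G}}_1,\widetilde{g}_1,\widetilde{\phi}_1,\lambda)$ and $(\widetilde{\mathcal{G}}_2,\widetilde{g}_2,\widetilde{\phi}_2,\lambda)$ as ambient-equivalent.

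The only genuine work is bookkeeping: verifying that ambient-equivalence is transitive (inverses and compositions of the required diffeomorphisms remain dilation-equivariant, restrict to the identity on $\mathcal{G}\times\{0\}$, and the $O(u^\infty)$ condition passes through pullbacks), and arranging the various open sets produced by \cref{pullback-normal} to have a common dilation-invariant domain on which all maps are defined. I expect the dilation-invariance of the neighborhoods and the compatibility of the $O(u^\infty)$ conditions under pullback to be the main, though routine, obstacle; once this is handled, the existence and uniqueness of normal ambient metrics from \cref{normal-ambient-metric-exists,pullback-normal} do all of the geometric work.
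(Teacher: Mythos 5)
Your proof is correct and follows essentially the same strategy as the paper: reduce to normal form via \cref{pullback-normal}, then invoke the uniqueness clause of \cref{normal-ambient-metric-exists} and transitivity of ambient-equivalence. The only cosmetic difference is that the paper fixes one reference normal ambient space and compares both spaces to it, whereas you normalize both and compare them directly; these are the same argument.
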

\begin{proof}
We pick a representative $(g,\phi)$ and invoke \cref{normal-ambient-metric-exists}. Then there exists a normal weighted ambient space $(\widetilde{\mathcal{G}}_1,\widetilde{g}_1,\widetilde{\phi}_1,\lambda)$ for the manifold with density $(M,g,\phi,\lambda)$. Now let $(\widetilde{\mathcal{G}}_2,\widetilde{g}_2,\widetilde{f}_2,\lambda)$ be a weighted ambient space for $(M,g,[\phi],\lambda)$. Applying \cref{pullback-normal}, we find that $(\widetilde{\mathcal{G}}_2, \widetilde{g}_2,\widetilde{\phi}_2,\lambda)$ is ambient-equivalent to a weighted ambient space in normal form relative to $(g,\phi)$. By \cref{normal-ambient-metric-exists}, this space is ambient-equivalent to $(\widetilde{\mathcal{G}}_1,\widetilde{g}_1,\widetilde{\phi}_1,\lambda)$.
\end{proof}
\section{Formal theory}
\label{sec:formal-theory}
In this section, we prove \cref{ambient-metric-theorem}. Our proof closely follows the construction of the ambient metric in~\cites{FeffermanGraham2012,CaseKhaitan2022}.

Given a metric measure structure $(g,\phi)$, we deduce using \cref{straight-metric-lemma} that%
\begin{equation}
\label{generalformambientmetric}
\begin{aligned}
\widetilde{g}&=a dv^2+ 2b_i dx^i dv + 2 du dv+ g_u dx^i dx^j,\\
\widetilde{\phi}&=\phi_u-v+\lambda uv.
\end{aligned}
\end{equation}

The Christoffel symbols for $\widetilde{g}$ are 
\begin{equation}
\label{christoffel symbols for the undetermined metric}
\begin{aligned}
\widetilde{\Gamma}_{IJ}^0&=\begin{pmatrix} -\frac{1}{2} a^\prime&-\frac{1}{2} b_i^\prime&0\\-\frac{1}{2} b_i^\prime&-\frac{1}{2} g^\prime_{ij}&0\\0&0&0\end{pmatrix},\\
\widetilde{\Gamma}_{IJ}^k&=\begin{pmatrix}-\frac{1}{2}g^{kl}\partial_l a&\frac{1}{2}g^{kl}(\partial_i b_l-\partial_l b_i)&\frac{1}{2}g^{kl}b_l^\prime\\\frac{1}{2}g^{kl}(\partial_i b_l-\partial_l b_i)&\Gamma_{ij}^k&\frac{1}{2}g^{kl}g^\prime_{li}\\\frac{1}{2}g^{kl}b_l^\prime&\frac{1}{2}g^{kl}g^\prime_{li}&0 \end{pmatrix},\\
\widetilde{\Gamma}_{IJ}^\infty&=\begin{pmatrix}0&\frac{1}{2} \partial_i a&\frac{1}{2} a^\prime\\\frac{1}{2} \partial_i a&\frac{1}{2} (\partial_j b_i+\partial_i b_j) &\frac{1}{2} b^\prime_i\\\frac{1}{2} a^\prime&\frac{1}{2} b_i^\prime&0 \end{pmatrix}.
\end{aligned}
\end{equation}
The rows and columns in the matrices in \cref{christoffel symbols for the undetermined metric} correspond to the coordinates $v,x^i$ and $u$ from top to bottom and left to right respectively. From \cref{christoffel symbols for the undetermined metric}, we get
\begin{align}
\begin{split}
(\widetilde{P}_\phi)_{00}&=-\frac{1}{2}\delta_\phi(\nabla a)-\frac{1}{4}a'g^{kl}(\partial_k b_l-\partial_l b_k)-\frac{1}{2}(a')^2\\
&-\frac{1}{4}g^{kp}g^{ls}(\partial_l b_p-\partial_p b_l)(\partial_k b_s-\partial_s b_k)+\frac{1}{2}a'(-1+\lambda u),
\end{split}\nonumber\\
\begin{split}
(\widetilde{P}_\phi)_{0i}&=\frac{1}{2}\partial_i a'+\frac{1}{2}\delta_\phi(\partial_i b-\partial b_i)+\frac{1}{2}g^{kl}(\partial_i b_l-\partial_l b_i)+\frac{1}{4}(\partial_i a)g^{kl}g'_{kl}\\
&-[\frac{1}{2}a'b_i'-\frac{1}{4}(g^{kl}\partial_l a)(\partial_i b_k +\partial_k b_i)-\frac{1}{4}b'_l g^{kl}(\partial_i b_k-\partial_k b_i)\\&+\frac{1}{4}g^{kl}b'_l(\partial_i b_k+\partial_k b_i)+\frac{1}{4}(\partial_l a)g^{lp}g'_{ip}]-\frac{1}{2}b_i'(1-\lambda u)-\frac{1}{2}(\partial_i a)(\phi'+\lambda v),
\end{split}\nonumber\\
\begin{split}
(\widetilde{P}_\phi)_{ij}&=-\frac{1}{2}g'_{ij}(1-\lambda u)+(P_\phi)_{ij}+\frac{1}{2}(\partial_j b_i'+\partial_i b_j'-b_i'b_j')\\
&-\frac{1}{4}g'_{ij}g^{ks}(\partial_k b_s-\partial_s b_k)+\frac{1}{4}(\partial_i b_j+\partial_j b_i)g^{kl}g'_{kl}
-\frac{1}{2}g'_{il}g^{lk}(\partial_j b_k-\partial_k b_j)\\ &+\frac{1}{2}g^{ks}g'_{is}(\partial_j b_k+\partial_k b_j)-\frac{1}{2}(\partial_j b_i+\partial_i b_j)(\phi'+\lambda v),
\end{split}\nonumber\\
\begin{split}
(\widetilde{P}_\phi)_{0\infty}&=\frac{1}{2}\delta_\phi b'+\frac{1}{2}a''-\frac{1}{4}g^{ki}g^{ls}g'_{sk}(\partial_l b_i-\partial_i b_l)+\frac{1}{4}a'(g')_l^l-\frac{1}{2}a'(\phi'+\lambda v),
\end{split}\nonumber\\
\begin{split}
(\widetilde{P}_\phi)_{i\infty}&=\frac{1}{2}(\delta_\phi g')_i+\frac{1}{2}b_i''-\frac{1}{2}\partial_i (g')_k^k+\frac{1}{4}(b_i')(g')_l^l+\partial_i \phi'-\frac{1}{2}(\phi'+\lambda v) b_i',
\end{split}\nonumber\\
\begin{split}
(\widetilde{P}_\phi)_{\infty\infty}&=-\frac{1}{2}g^{kl}g''_{kl}-\frac{1}{4}(g')^{kl}g'_{kl}+\phi''.
\end{split}\nonumber
\end{align}    
\begin{lemma}
\label{straighta=0}
If $\widetilde{g}$ is a normal pre-ambient metric, then 
\begin{equation}
\label{straight-equations}
\begin{aligned}
\widetilde{g}_{00}\equiv 0 \bmod O(u^\infty),\\
\widetilde{g}_{0i}\equiv 0 \bmod O(u^\infty).
\end{aligned}
\end{equation}
\end{lemma}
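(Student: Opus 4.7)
The plan is to induct on $k\geq 1$, showing at each stage that the coefficients $a := \widetilde{g}_{00}$ and $b_i := 2\widetilde{g}_{0i}$ in the decomposition \cref{generalformambientmetric} satisfy $a, b_i = O(u^k)$. Since the conclusion is that both vanish to infinite order, passing $k \to \infty$ gives the lemma. The only ingredients I will use are the normal-form condition (iii) of \cref{normal-definition}, the straight condition (v) of \cref{preambient-defintion}, and the two components $(\widetilde{P}_\phi)_{00}, (\widetilde{P}_\phi)_{0i}\equiv 0\bmod O(u^\infty)$ of the ambient condition (implicit in the context of \cref{sec:formal-theory}, where the normal pre-ambient metric under study is being constructed to solve the ambient equations).

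\textbf{Base case} $(k=1)$. The normal form condition (iii) of \cref{normal-definition} gives $a|_{u=0}=0$ and $b_i|_{u=0}=0$. Writing $a=a_1 u+O(u^2)$ and $b_i=b_{i,1}u+O(u^2)$ and plugging into the displayed formula for $(\widetilde{P}_\phi)_{00}$, every term involving $\partial_i a$, $b_i$, or $(\partial b-\partial b)$ vanishes at $u=0$, leaving
\[
(\widetilde{P}_\phi)_{00}\bigr|_{u=0} = -\tfrac{1}{2}a_1^2 -\tfrac{1}{2}a_1 = -\tfrac{1}{2}a_1(a_1+1).
\]
The ambient condition forces $a_1(a_1+1)=0$. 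The spurious root $a_1=-1$ is excluded by the straight condition, which unpacks as $\nabla_{\partial_v}\partial_v|_{u=0}=-\tfrac{1}{2}a_1\partial_v-\tfrac{1}{2}g^{kl}\partial_l a|_{u=0}\partial_k \equiv 0$ and thus forces $a_1=0$. Hence $a=O(u^2)$. Substituting $a_1=0$ and $b_i|_{u=0}=0$ into the displayed expression for $(\widetilde{P}_\phi)_{0i}$ at $u=0$ kills every term except $-\tfrac{1}{2}b_{i,1}$, so the ambient condition gives $b_i=O(u^2)$.

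\textbf{Inductive step} $(k\to k+1$, $k\geq 2)$. Assume $a, b_i = O(u^k)$, and write $a = a_k u^k + O(u^{k+1})$, $b_i = b_{i,k} u^k + O(u^{k+1})$. In the displayed expression for $(\widetilde{P}_\phi)_{00}$, the linear piece $-\tfrac{1}{2}\delta_\phi(\nabla a)$ is $O(u^k)$, while the three nonlinear pieces satisfy $-\tfrac{1}{4}a'(\partial b-\partial b) = O(u^{2k-1})$, $-\tfrac{1}{2}(a')^2 = O(u^{2k-2})$, and $-\tfrac{1}{4}(\partial b-\partial b)(\partial b-\partial b) = O(u^{2k})$, each contained in $O(u^k)$ because $k\geq 2$. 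The remaining term $\tfrac{1}{2}a'(-1+\lambda u)$ contributes $-\tfrac{k}{2}a_k u^{k-1} + O(u^k)$, and $(\widetilde{P}_\phi)_{00}\equiv 0\bmod O(u^\infty)$ forces $a_k=0$, i.e.\ $a=O(u^{k+1})$. With $a$ now of order $u^{k+1}$, the potentially competing term $\tfrac{1}{2}\partial_i a'$ in the displayed formula for $(\widetilde{P}_\phi)_{0i}$ becomes $O(u^k)$; the same bookkeeping collapses everything in $(\widetilde{P}_\phi)_{0i}$ except $-\tfrac{1}{2}b_i'(1-\lambda u) = -\tfrac{k}{2}b_{i,k}u^{k-1}+O(u^k)$, and the ambient condition forces $b_{i,k}=0$, so $b_i=O(u^{k+1})$.

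\textbf{Main obstacle.} The delicate point is the ordering in the inductive step: $(\widetilde{P}_\phi)_{0i}$ contains the term $\tfrac{1}{2}\partial_i a'$, which a priori lives at the same order $u^{k-1}$ as $-\tfrac{1}{2}b_i'$, so the $b$-equation does not decouple until $a$ has been improved one order by the $(\widetilde{P}_\phi)_{00}$-equation. The base case is also subtler than it appears because the equation for $a_1$ is quadratic and the spurious root $a_1=-1$ is eliminated only by invoking the straight condition (v); it is specifically to handle this obstruction that condition (v) was included in \cref{preambient-defintion}.
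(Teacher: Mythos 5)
Your proof is correct and follows essentially the same route as the paper: use the straight condition at the base case, then run an induction on the Taylor order powered by the $(00)$ and $(0i)$ components of the ambient condition, with the same careful observation that the $a$-order must be improved before the $b_i$-order decouples. You also correctly flag (as does the paper's remark after the lemma, and the remark after Definition~\ref{normal-definition}) that the straight condition is exactly what excludes the branch $a'\equiv -1+O(u)$; the only cosmetic difference is that you derive the quadratic $a_1(a_1+1)=0$ from the ambient equation and then invoke the straight condition to select the root, whereas the paper reads $a_1=0$ directly off $\widetilde{\Gamma}_{00}^I\equiv 0\bmod O(u)$, and then treats $\widetilde{g}_{00}$ and $\widetilde{g}_{0i}$ in two separate inductions rather than your single coupled one.
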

\begin{proof}
From~\cref{straight-definition}, which states that $\widetilde{\nabla}_{\partial_v} \partial_v=0\bmod O(u)$, we infer that $\widetilde{\Gamma}_{00}^I\equiv 0\bmod O(u)$. We see from \cref{christoffel symbols for the undetermined metric} that this is equivalent to $\widetilde{g}_{00}\equiv 0 \bmod O(u^2)$. Using \cref{normal-form,christoffel symbols for the undetermined metric}, we deduce that at $\mathcal{G}$,
\begin{subequations}
\label{ricciequations}
\begin{align}
\begin{split}
\label{00component}
(\widetilde{P}_\phi)_{00}&\equiv -\left[\frac{1}{2} a^\prime(1+ a^\prime)\right]\bmod O(u),
\end{split}\\
\begin{split}
\label{0icomponent}
(\widetilde{P}_\phi)_{0i}&\equiv \left[\frac{1}{2} (\partial_i a^\prime)-\frac{1}{2} (a^\prime)(b_i^\prime)-\frac{1}{2} b_i^\prime\right]\bmod O(u).
\end{split}
\end{align}
\end{subequations}

\label{000ivanishing}
From \cref{00component}, we conclude that $(\widetilde{P}_\phi)_{00}\equiv O(u)$. Moreover, we conclude from \cref{00component,0icomponent} that if $(\widetilde{P}_\phi)_{0i}\equiv O(u)$, then $\widetilde{g}_{0i}\equiv O(u^2)$. We deduce from the expression for $(\widetilde{P}_\phi)_{00}$ and $\widetilde{g}_{00}\equiv 0\bmod \, O(u^2)$ that if $(\widetilde{P}_\phi)_{00}\equiv 0\bmod \,O(u^\infty)$, then $\widetilde{g}_{00}\equiv 0\bmod\, O(u^\infty)$.

Let us now assume that there exist $(\widetilde{g}_{0i}^{(m-1)},\widetilde{\phi}^{(m-1)})$ such that
\begin{equation}
\label{000i-iterative-first-step}
\begin{aligned}
\widetilde{g}^{(m-1)}_{0i}&=0,\\
(\widetilde{P}_\phi)^{(m-1)}_{00}&\equiv O(u^{m-1}),\\(\widetilde{P}_\phi)^{(m-1)}_{0i}&\equiv O(u^{m-1}),
\end{aligned}
\end{equation} for some $m\geq 2$. We define \[\widetilde{g}^{(m)}_{0i}=\psi_{0i}u^m,\quad \widetilde{\phi}^{(m)}=\phi^{(m-1)}+u^m\phi^{(m)}, \] 
and compute using \cref{straight-equations,000i-iterative-first-step} that 
\begin{equation*}
\begin{aligned}
(\widetilde{P}_\phi)_{00}&\equiv  O(u^m)\\
(\widetilde{P}_\phi)_{0i}&\equiv -\frac{m}{2}\psi_{0i}\bmod O(u^m).
\end{aligned}    
\end{equation*}
Hence, for all $m\geq 2$, we can choose $\psi_{0i}=0$ such that $(\widetilde{P}_\phi)_{00}=(\widetilde{P}_\phi)_{0i}\equiv O(u^m)$. This implies that $(\widetilde{P}_\phi)_{00}=(\widetilde{P}_\phi)_{0i}\equiv O(u^\infty)$ if $\widetilde{g}_{00}=\widetilde{g}_{0i}\equiv O(u^\infty)$.
\end{proof}
Note that it is because of \cref{straight-definition} that we deduce that $a'=0$. We see from \cref{0icomponent} that $a'\equiv -1+O(u)$ is also possible for a normal weighted ambient metric that does not satisfy \cref{straight-definition}. 
\begin{proof}[Proof of \cref{normal-ambient-metric-exists}]
Assuming that $\widetilde{g}_{00}=\widetilde{g}_{0i}\equiv O(u^\infty)$, we now iteratively construct a normal weighted ambient space.
Let $m\in\mathbb{N}$ be such that there is a metric
\[ \widetilde{g}_{IJ}^{(m-1)} =  \begin{pmatrix}
0  & 0   & 1 \\
0 & g_{u} & 0 \\
1         & 0            & 0
\end{pmatrix}\]
and a function $\widetilde{\phi}^{(m-1)}=\phi_u-v+\lambda uv$, such that 
\begin{align*}
\widetilde{P}_\phi^{(m-1)}&\equiv O(u^{m-1}),\\
(\widetilde{F}_\phi)^{(m-1)}&\equiv O(u^{m-1}).
\end{align*}
Note that the existence of $\widetilde{g}_{IJ}^{(1)}$ and $\widetilde{\phi}^{(1)}$ trivially holds. Now let $\cg^{(m)}_{IJ}=\cg^{(m-1)}_{IJ}+\Phi_{IJ}$ and $\widetilde{\phi}^{(m)}= \widetilde{\phi}^{(m-1)}+u^{m}\phi^{(m)}$, where 
\begin{align*}
\Phi_{IJ}&= u^m \begin{pmatrix}
0  & 0   & 0 \\
0 & \psi_{ij} & 0 \\
0          & 0            & 0
\end{pmatrix},
\end{align*} and $\psi_{ij},\phi^{(m)}$ depend only on $x$. 
We seek $\psi_{IJ}$ and $\upsilon$ such that 
 \begin{align*}
\widetilde{P}_\phi^{(m)}&\equiv  O(u^{m}),\\
(\widetilde{F}_\phi)^{(m)}&\equiv   O(u^{m}).
\end{align*}
The Christoffel symbols for $\widetilde{g}^{(m)}$ are
\begin{equation}
\label{christoffel symbols for the iterative formula}
\begin{aligned}
\widetilde{\Gamma}_{IJ}^0&=\begin{pmatrix} 0&0&0\\0&-\frac{1}{2}(g^\prime_{ij}+mu^{m-1}\psi_{ij})&0\\0&0&0\end{pmatrix},\\
\widetilde{\Gamma}_{IJ}^k&=\begin{pmatrix}0&0&0\\0&\Gamma_{ij}^k+u^m(\Gamma_\psi){}^k_{ij}&\frac{1}{2}g^{kl}(g^\prime_{li}+mu^{m-1}\psi_{li})\\0&\frac{1}{2}g^{kl}(g^\prime_{li}+mu^{m-1}\psi_{li})&0 \end{pmatrix},\\
\widetilde{\Gamma}_{IJ}&=\begin{pmatrix}0&0&0\\0&0&0\\0&0&0 \end{pmatrix}.
\end{aligned}
\end{equation}
From \cref{christoffel symbols for the iterative formula,straight-equations}, we get 
\begin{equation}
\label{iterative-formulas-ijf}
\begin{aligned}
(\widetilde{P}_\phi)_{ij}^{(m)}&\equiv (\widetilde{P}_\phi)_{ij}^{(m-1)}-\frac{1}{2}mu^{m-1}\psi_{ij}^{(m)}\;\bmod\;O(u^m),\\
(\widetilde{F}_{\phi})^{(m)}&\equiv (\widetilde{F}_{\phi})^{(m-1)}
-mu^{m-1}\big(\frac{1}{2}g^{ij}\psi_{ij}^{(m)}-2{\phi}^{(m)}\big)\;\bmod\;O(u^m),\\
(\widetilde{P}_\phi)_{0\infty}^{(m)}&\equiv (\widetilde{P}_\phi)_{0\infty}^{(m-1)}\;\bmod\;O(u^{m-1}),\\
(\widetilde{P}_\phi)_{i\infty}^{(m)}&\equiv (\widetilde{P}_\phi)_{i\infty}^{(m-1)}\;\bmod\;O(u^{m-1}),\\
(\widetilde{P}_\phi)_{\infty\infty}^{(m)}&\equiv (\widetilde{P}_\phi)_{\infty\infty}^{(m-1)}-m(m-1)u^{m-2}\big(\frac{1}{2}g^{kl}\psi^{(m)}_{kl}-{\phi}^{(m)}\big)\;\bmod\;O(u^{m-1}).
\end{aligned}
\end{equation}
From \cref{iterative-formulas-ijf}, we conclude that there is no obstruction to solving $(\widetilde{P}_\phi)_{ij}$ and $\widetilde{F}_{\phi}$ to order $O(u^{\infty})$ at $u=0$.

We now show that $(\widetilde{P}_\phi)_{I\infty}\equiv 0\;\bmod\;O(u^\infty)$ for $I=\{0,i,\infty\}$.
We know from \cref{000ivanishing} that $(\widetilde{P}_\phi)_{00},(\widetilde{P}_\phi)_{0i}\equiv 0\;\bmod\;O(u^\infty)$.
Now let us assume that 
\begin{equation}
\begin{aligned}
\label{ijsvanishing}
(\widetilde{P}_\phi)_{ij}&\equiv 0\;\bmod\;O(u^{m}),\\ \widetilde{F}_{\phi}&\equiv 0\;\bmod\;O(u^{m}),\\
(\widetilde{P}_\phi)_{I\infty}&\equiv 0\;\bmod\;O(u^{m-2}).
\end{aligned}
\end{equation}
Taking $I=0,i,\infty$ in \cref{weighted bianchi}, computing$\mod O(u^{m-1})$ at $u=0$, and simplifying using \cref{000ivanishing,ijsvanishing}, yields
\begin{equation}
\label{weightedbianchiequations}
\begin{aligned}
2(\widetilde{P}_\phi)_{0\infty}&\equiv 0\;\bmod\;O(u^{m-1}),    \\
2(\widetilde{P}_\phi)_{i\infty}&\equiv 0\;\bmod\;O(u^{m-1}) ,    \\
2(\frac{1}{2}g^{kl}g^\prime_{kl}-{\phi}^\prime-\lambda v)(\widetilde{P}_\phi)_{0\infty}\\
+2(\widetilde{P}_\phi)_{\infty\infty}
+2\nabla_{\phi}^k(\widetilde{P}_\phi)_{k\infty}&\equiv 0\;\bmod\;O(u^{m-1}). 
\end{aligned}
\end{equation}
We see from \cref{weightedbianchiequations} that $(\widetilde{P}_\phi)_{I\infty}\equiv 0\;\bmod\;O(u^{m-1})$ for $I=\{0,i,\infty\}$.
\end{proof}
\begin{proof}[Proof of \cref{ambient-metric-theorem}]
Using \cref{pullback-normal,normal-ambient-metric-exists}, we complete the proof of \cref{ambient-metric-theorem}.
\end{proof}
\subsection{The first few terms in the expansion of $(g_u,\phi_u)$} We now calculate the first few terms of the Taylor expansions of $g_u$ and $\phi_u$.

Using \cref{straight-equations}, we write $(\widetilde{P}_\phi)_{IJ},(\widetilde{F}_\phi)\equiv 0\bmod O(u^\infty)$ at $u=0$ as
\begin{subequations}
\begin{align}
\begin{split}
\label{ijcomponent}
(\widetilde{P}_\phi)_{ij}&=(P_{\phi})_{ij}-\frac{1}{2} g^\prime_{ij}\equiv 0\bmod O(u^\infty), \end{split}\\
\begin{split}
\label{scomponent}
\widetilde{F}_\phi&=-\frac{1}{2} g^{ij}g^\prime_{ij}+F_{\phi}+2 \phi^\prime - n \lambda\equiv 0\bmod O(u^\infty),
\end{split}\\
\begin{split}
\label{0inftycomponent}
(\widetilde{P}_\phi)_{0\infty}&\equiv 0\bmod O(u^\infty),   \end{split}\\
\begin{split}
\label{iinftycomponent}
(\widetilde{P}_\phi)_{i\infty}&=\frac{1}{2}[(\delta_{\phi}g^\prime)_i-\nabla_i(g^\prime)_k{}^k
+2\nabla_i\phi^\prime]\equiv 0\bmod O(u^\infty), \end{split}\\
\begin{split}
\label{inftyinftycomponent}
(\widetilde{P}_\phi)_{\infty\infty}&=-\frac{1}{2}g^{kl}g^{\prime\prime}_{kl}+\frac{1}{4}(g^\prime)^{kl}(g^\prime)_{kl} +\phi^{\prime\prime}\equiv 0\bmod O(u^\infty). \end{split}    
\end{align}
\end{subequations}
From \cref{ijcomponent,scomponent}, we have at $u=0$ 
\begin{equation}
\label{g'phi'}
\begin{aligned}
g_{ij}^\prime&=2(P_{\phi})_{ij},\\ \phi^\prime&=-Y_{\phi}.
\end{aligned}
\end{equation}
On differentiating \cref{ijcomponent}, and using the first variation formulas of $\mathrm{Ric}$ and $\nabla^2\phi$~(cf.\ \cite{Besse}, pg. 62), 
we get at $u=0$
\begin{equation}
\label{g''}
\begin{aligned}
g^{\prime\prime}_{ij}=-2(B_{\phi})_{ij}+2\pt_{i}^p\pt_{pj}.
\end{aligned}
\end{equation}
From \cref{inftyinftycomponent}, we get
\begin{equation}
\label{phi''}
{\phi}^{\prime\prime}=-\mathrm{tr}_g B_{\phi}.
\end{equation}
On differentiating \cref{inftyinftycomponent} and again using the relevant variation formulas, we get
\begin{equation*}
\frac{1}{2}g^{ij}g^{\prime\prime\prime}_{ij}-\phi^{\prime\prime\prime}=4(P_\phi)_i^k (P_\phi)_k^j (P_\phi)_j^i. \end{equation*}
Note that if we don't assume $u=0$ in \cref{ijcomponent,scomponent}, we get 
\begin{align*}
g'_{ij}&=2(1-\lambda u)^{-1}P_\phi,\\
\phi'&=-(1-\lambda u)^{-1}Y_\phi.
\end{align*}
This observation will be useful in constructing the Ricci flow vector field in \cref{sec:w-funct}.


\section{The $\mathcal{F}$--gradient Ricci flow}
\label{sec:ricci}
In this section, we prove \cref{ricci-flow,h-embedding,ricci-equivalent-normal,einstein-long-term-behavior}. 

We first prove the existence of the gradient Ricci flow in the ambient space $(\widetilde{\mathcal{G}},\widetilde{g},\widetilde{\phi},\lambda=0)$ along the vector field 
\begin{equation}
\label{def-X}
X:=(v_{1,\phi})_u\partial_v-\partial_u.
\end{equation}
\begin{proof}[Proof of \cref{ricci-flow}]
For $\lambda=0$, we deduce from \cref{normalambientmetric} that the ambient metric is of the form
\begin{align*}
\widetilde{g}&= 2 dudv+g_u, \quad \widetilde{\phi}=\phi_u-v.
\end{align*}
We obtain the following from \cref{ijcomponent,scomponent}:
\begin{equation*}
\begin{aligned}
g^\prime_u&=2 (\mathrm{Ric}_\phi)_u\bmod O(u^\infty),\\ \phi_u^\prime&=\frac{1}{2}(R_u+|\nabla\phi_u|^2)\bmod O(u^\infty).
\end{aligned}
\end{equation*}
Hence, we have
\begin{equation}
\label{ricci-flow-eqns}
\begin{aligned}
(L_X g_u)_{ij}&=-2 [(\mathrm{Ric}_\phi)_u]_{ij}\bmod O(u^\infty),\\
L_X \widetilde{\phi}&=-(R_u+\Delta\phi_u)\bmod O(u^\infty).\qedhere
\end{aligned}
\end{equation}
\end{proof} 
Note that 
\begin{equation}
\label{volume-constant-ricci-flow}
L_X [e^{-\widetilde{\phi}}\mathrm{dvol}_{\widetilde{g}}]=0\bmod O(u^\infty).
\end{equation}

We now prove that, given a gradient Ricci flow spacetime, we can construct a unique global weighted ambient half-space that contains it. This result holds for general $\lambda$. Later, we will consider the $\lambda=0$ case, which will allow us to study the monotonicity of the $\mathcal{F}_{k,\phi}$ functionals under the Ricci flow. 
\begin{proof}[Proof of \cref{h-embedding}]
At any point $m\in\mathcal{M}$, the gradient Ricci flow spacetime reduces to a gradient Ricci flow in the usual sense, because the time function $\mathfrak{t}$ forms part of a chart $(x^i,t)$ near $m$ for which the coordinate vector field $\partial/\partial t$ coincides with $\partial_{\mathfrak{t}}$~(cf.\ \cite{KleinerLott2017}, pg. 69). On $ \mathbb{R}\times \mathcal{M}$, we consider the coordinates $\{v,x^i,u\}$, where $\{x^i,u\}$ are the coordinates on $\mathcal{M}$ and $v$ the coordinate on $\mathbb{R}$. Here $u$ is such that $\partial_t=-(1-\lambda u)\partial_u$. We use $0$ to label the $v$-component, $\infty$ to label the $u$-component, and lower case Latin to label coordinates on $M$. Let $(\widetilde{g},\widetilde{\phi})$ be such that in these coordinates,
\begin{equation*}
\begin{aligned}
\widetilde{g}&=2dudv+g_u-(1-\lambda u)^2(R_\phi)_udu^2,\\
\widetilde{\phi}&=\phi_u-v+\lambda uv,  \end{aligned} \end{equation*}
where $(R_\phi)_u=R_u+2\Delta_u\phi_u-|\nabla_u\phi_u|^2+2\lambda(\phi_u-n)$. 
Hence, we have 
\begin{equation}
\label{inv-g-half-space}    \widetilde{g}^{IJ}=\begin{pmatrix}
        (1-\lambda u)^2(R_\phi)_u &0&1\\ 0& g_u^{ij}&0\\ 1&0&0
    \end{pmatrix}.
\end{equation}
From \cref{inv-g-half-space}, we calculate that 
\begin{equation}
\label{christoffel-half-space}
\begin{aligned}
 \Gamma_{IJ}^0&=\begin{pmatrix}
     0&0&0\\0&-\frac{1}{2}g'_{ij}&-\frac{1}{2}(1-\lambda u)^2\partial_i (R_\phi)_u\\
     0&-\frac{1}{2}(1-\lambda u)^2\partial_i (R_\phi)_u&-\frac{1}{2}[(1-\lambda u)^2 R_\phi]'_u
 \end{pmatrix},\\
 \Gamma_{IJ}^k&=\begin{pmatrix}
     0&0&0\\0&\Gamma_{ij}^k&\frac{1}{2}g^{kl}g'_{il}\\ 0&\frac{1}{2}g^{kl}g'_{il}&\frac{1}{2}(1-\lambda u)^2g^{kl}\partial_l (R_\phi)_u
 \end{pmatrix},\\
 \Gamma_{IJ}^\infty&=\begin{pmatrix}
     0&0&0\\0&0&0\\0&0&0
 \end{pmatrix}.
\end{aligned}    
\end{equation}
From \cref{christoffel-half-space}, we calculate that 
\begin{equation}
\label{half-space-ric-0}(\widetilde{P}_\phi)_{00}=(\widetilde{P}_\phi)_{0i}=(\widetilde{P}_\phi)_{ij}=\widetilde{F}_\phi=(\widetilde{P}_\phi)_{0\infty}=0.
\end{equation}
From \cref{christoffel-half-space,weighted bianchi,half-space-ric-0}, we calculate that \[(\widetilde{P}_\phi)_{i\infty}=(\widetilde{P}_\phi)_{\infty\infty}=0.\qedhere\]
\end{proof}
Note that the ambient space constructed above is ambient-equivalent to a normal global weighted ambient space. Hence, given a gradient Ricci flow spacetime $\mathcal{M}$, we can embed it in a normal global weighted ambient space. 

We prove that all global ambient half-spaces are ambient-equivalent to a normal ambient half-space.
\begin{proof}[Proof of \cref{ricci-equivalent-normal}]
There exists a vector field $V \in T\widetilde{\mathcal{G}}$ such that 
\begin{equation}
\label{Vconditions'}
\begin{aligned}
\widetilde{g}(V,T)&=1,\\
\widetilde{g}(V,X)&=0\text{ for all }X\in\mathcal{M}_z,\\
\widetilde{g}(V,V)&=0.
\end{aligned}
\end{equation} 
In particular, let 
\begin{equation*}
\begin{aligned}
V&=V^0\partial_v+V^i\partial_{x^i}+V\partial_u.
\end{aligned}    
\end{equation*}
\cref{Vconditions'} becomes
\begin{equation}
\label{Vequationsproof'}
\begin{aligned}
-V^0&=1,\\
\widetilde{g}_{ij}V^j&=0\\
2 V^{0} V^{\infty}+ \widetilde{g}_{i j} V^{i} V^{j}-R_\phi\left(V^{\infty}\right)^{2}&=0.
\end{aligned}    
\end{equation}
We can use \cref{Vequationsproof'} to solve uniquely for $V^0,V^i$ and $V$. The dilation invariance and smoothness follow from the properties defining a pre-ambient space. 

Let $\psi_t$ be the one-parameter family of diffeomorphisms generated by $V$. For $z \in \mathcal{G}$, consider the map $(z,t)\to \psi_t (z)$. Let the metric at $(z,t)$ be $\psi_t^*(z)(\widetilde{g})$. 
Note that $\psi_t(z)$ is defined for all $t\in I$, where $I$ is the co-domain of the time function $t$ on the normal spacetime. Since $\widetilde{g}$ and $V_z$ are homogeneous with respect to the dilations, we may take $\mathcal{G}\times I$ to be dilation-invariant. Thus, $\psi: \mathcal{G}\times I \rightarrow \widetilde{\mathcal{G}}$ is a smooth map that commutes with dilations.

Since $(\widetilde{\mathcal{G}}, \widetilde{g})$ is a pre-ambient space for $(M,[g])$, so is $\left(\mathcal{U}, \psi^* \widetilde{g}\right)$. Also, we have:
$$
\begin{aligned}
& \left(\psi^* \widetilde{g}\right)\left(\partial_t, T\right)=1, \\
& \left(\psi^* \widetilde{g}\right)\left(\partial_t, X\right)=0 \text { for } X \in T M, \\
& \left(\psi^* \widetilde{g}\right)\left(\partial_t, \partial_t\right)=0 .
\end{aligned}
$$
Together with \cref{iv}, these equations show that $\psi^* \widetilde{g}=\mathbf{g}_0+2 d v dt$ when $t=0$. This establishes the existence the normal global weighted ambient space. The uniqueness follows from the fact that the above construction of $\psi$ is forced. If $\psi$ is any diffeomorphism with the required properties, then at every point in $\widetilde{\mathcal{G}}, \psi_*\left(\partial_t\right)$ is a $g$-transversal for $\widetilde{g}$, so must be $V$. These requirements uniquely determine $\psi$ on $\mathcal{G}\times I$.
\end{proof}
\begin{proof}[Proof of \cref{every-global-ambient}]
This follows from \cref{h-embedding,ricci-equivalent-normal}.    
\end{proof}
We define \[p_{k,\phi}=k!v_{k,\phi}.\]
We recall from \cref{normalambientmetric} that for $\lambda=0$, the unique normal ambient metric $(\widetilde{g},\widetilde{\phi})$ is of the form 
\begin{equation}
\label{normalambientlambda=0}
\widetilde{\phi}=\phi_u-v,\quad \widetilde{g}=2dudv+g_u.
\end{equation}
At a point $p\in\widetilde{\mathcal{G}}$, we define 
\begin{equation}
\label{newpkdefinition}
p_{k,\phi}(v,x,u):=\partial_u^k\frac{e^{-\widetilde{\phi}}(\mathrm{det}\, \widetilde{g})^{1/2}}{\restr{e^{-\widetilde{\phi}}(\mathrm{det}\, \widetilde{g})^{1/2}}{p}},
\end{equation} where $\partial_u^k$ refers to $\underbrace{\partial_u\dotsm\partial_u}_{k\text{ times}}$. We have from \cref{normalambientlambda=0,newpkdefinition,volcoefficientsdef} that
\begin{equation*}
 \restr{p_{k,\phi}(v,x,u)}{p}=p_{k,\phi}.   
\end{equation*}
\begin{lemma}
For all $k\in\mathbb{N}$, we have
\begin{equation}
\label{deriv-equation}
    \restr{[L_X p_{k,\phi}(v,x,u)]}{p}=-p_{k+1,\phi}+p_{1,\phi} p_{k,\phi}.
\end{equation}    
\end{lemma}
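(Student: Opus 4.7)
The proof is essentially an unpacking of the definition of $p_{k,\phi}(v,x,u)$ together with the very simple way that the coordinate $v$ enters the normal ambient data when $\lambda=0$. Set
$$F(v,x,u) := e^{-\widetilde{\phi}}(\det \widetilde{g})^{1/2},$$
so that by \cref{newpkdefinition} one has $p_{k,\phi}(v,x,u) = \partial_u^k F(v,x,u)/F(p)$, where $F(p)$ is a constant independent of $(v,x,u)$. Since $F(p)$ pulls out of every derivative, the identity $\partial_u p_{k,\phi}(v,x,u)=p_{k+1,\phi}(v,x,u)$ is immediate, which takes care of the $-\partial_u$ piece of $X$.

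The key observation is that in the normal form \cref{normalambientlambda=0} for $\lambda=0$, the metric $\widetilde g = 2\,du\,dv + g_u$ has $g_u$ independent of $v$, so $\det\widetilde g$ is independent of $v$, while $\widetilde\phi = \phi_u - v$ gives $\partial_v \widetilde \phi = -1$. Hence $\partial_v F = F$ identically on $\widetilde{\mathcal G}$, and since $\partial_v$ and $\partial_u$ commute we obtain
$$\partial_v p_{k,\phi}(v,x,u) = \frac{\partial_u^k \partial_v F}{F(p)} = \frac{\partial_u^k F}{F(p)} = p_{k,\phi}(v,x,u).$$
Combining these computations with $X = p_{1,\phi}(v,x,u)\,\partial_v - \partial_u$ (so that as a function-valued derivation on $\widetilde{\mathcal{G}}$) yields
$$L_X p_{k,\phi}(v,x,u) = p_{1,\phi}(v,x,u)\,p_{k,\phi}(v,x,u) - p_{k+1,\phi}(v,x,u),$$
and restricting to $p$ using the already-noted identity $\restr{p_{j,\phi}(v,x,u)}{p}=p_{j,\phi}$ for $j=k, k+1$ produces the claimed equation. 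The value $\restr{p_{1,\phi}(v,x,u)}{p}=p_{1,\phi}$ serves as the (constant) coefficient multiplying $p_{k,\phi}$ at $p$.

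There is no real obstacle here; the only step that requires any care is confirming that $\partial_v F = F$, which is a direct consequence of the form of $\widetilde \phi$ and of $g_u$ being $v$-independent in normal form at $\lambda=0$. Everything else is the product rule applied to the function-valued field $X$ acting on the explicit quotient defining $p_{k,\phi}(v,x,u)$.
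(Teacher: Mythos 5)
Your strategy matches the paper's: unpack \cref{newpkdefinition}, use $\partial_v$ and $\partial_u$ separately, combine with $X$, and restrict to $p$. The observations $\partial_u p_{k,\phi}(v,x,u)=p_{k+1,\phi}(v,x,u)$ and $\partial_v F = F$ (hence $\partial_v p_{k,\phi}(v,x,u)=p_{k,\phi}(v,x,u)$) are both correct and are exactly what the paper's proof uses.

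However, you mis-state the vector field. You write $X = p_{1,\phi}(v,x,u)\,\partial_v - \partial_u$, but the paper defines $X = (v_{1,\phi})_u\,\partial_v - \partial_u$, where $(v_{1,\phi})_u = \tfrac12(R_u + 2\Delta_u\phi_u - |\nabla_u\phi_u|^2)$ is a $v$\emph{-independent} function on $\widetilde{\mathcal{G}}$. These two coefficients are not equal: by your own computation $\partial_v p_{1,\phi}(v,x,u)=p_{1,\phi}(v,x,u)$, so $p_{1,\phi}(v,x,u)$ carries an $e^v$ factor; in fact $p_{1,\phi}(v,x,u)=v_\phi(v,x,u)\cdot(v_{1,\phi})_u$, with $v_\phi = F(v,x,u)/F(p)$, and the two agree only where $v_\phi=1$ (e.g.\ at $p$). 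Consequently the displayed identity $L_X p_{k,\phi}(v,x,u) = p_{1,\phi}(v,x,u)\,p_{k,\phi}(v,x,u) - p_{k+1,\phi}(v,x,u)$ is incorrect as a statement on $\widetilde{\mathcal{G}}$ for the paper's $X$; the correct version, which is the paper's \cref{pkderivativeexpression}, has $(v_{1,\phi})_u$ as the coefficient. Since $(L_X f)(p)$ depends only on $X_p$, and both coefficients evaluate to $p_{1,\phi}$ at $p$, the error does not propagate to the restricted equation, so your conclusion still holds; but you should replace $p_{1,\phi}(v,x,u)$ by $(v_{1,\phi})_u$ in the description of $X$ and in the intermediate identity.
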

\begin{proof}
We deduce from \cref{newpkdefinition,normalambientlambda=0} that 
\begin{equation}
\label{pkderivativeexpression}
L_X p_{k,\phi}(v,x,u)= -p_{k+1,\phi}(v,x,u)+v_{1,\phi} p_{k,\phi}(v,x,u).  
\end{equation}
On restricting \cref{pkderivativeexpression} to $p$ and noting that $v_{1,\phi}=p_{1,\phi}$, we get
\[\restr{L_X p_{k,\phi}(v,x,u)}{p}=-p_{k+1,\phi}+p_{1,\phi}p_{k,\phi}.\qedhere\]
\end{proof}
Using \cref{deriv-equation}, we recover Perelman's result~\cites{Perelman1} that $\mathcal{F}_{1,\phi}$ is monotonic under the gradient Ricci flow.
\begin{lemma}
\label{f-increasing}
$\mathcal{F}_{1,\phi}$ is monotone increasing under the gradient Ricci flow.
\end{lemma}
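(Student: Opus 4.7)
The plan is to apply the key identity \cref{deriv-equation} with $k=1$ at the base point $p$ where $u=v=0$. This yields
$$\restr{L_X p_{1,\phi}(v,x,u)}{p} = -p_{2,\phi} + p_{1,\phi}^2.$$
Using $p_{k,\phi} = k!\,v_{k,\phi}$ together with the explicit formulas in \cref{v-formulas}, namely $p_{1,\phi} = v_{1,\phi}$ and $p_{2,\phi} = 2v_{2,\phi} = v_{1,\phi}^2 - |P_\phi|^2$, the right-hand side simplifies to $|P_\phi|^2$, which is manifestly nonnegative.

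Next, I would invoke \cref{ricci-flow} to identify the flow of $X$ with the $\mathcal{F}$-gradient Ricci flow on the base, and \cref{volume-constant-ricci-flow} to note that the weighted volume form $e^{-\widetilde{\phi}}\mathrm{dvol}_{\widetilde{g}}$ is preserved along $X$ modulo $O(u^\infty)$. Restricting to $u=v=0$ and differentiating under the integral sign,
$$\frac{d}{dt}\mathcal{F}_{1,\phi} = \int_M \restr{L_X p_{1,\phi}(v,x,u)}{p}\, e^{-\phi}\mathrm{dvol}_g = \int_M |P_\phi|^2\, e^{-\phi}\mathrm{dvol}_g \geq 0,$$
which proves monotonicity. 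The inequality is strict unless $P_\phi \equiv 0$, i.e., unless the manifold with density is a steady gradient Ricci soliton, recovering Perelman's classical calculation.

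The only conceptual subtlety is the identification of $\restr{L_X p_{1,\phi}}{p}$ with $\frac{d}{dt} v_{1,\phi}$ under the Ricci flow. This is built into the construction: $p_{1,\phi}(v,x,u)$ is defined via \cref{newpkdefinition} as an ambient-space extension of $v_{1,\phi}$, and $X$ acts as the time vector on the base through the equations in \cref{ricci-flow}. Since all of these ingredients have already been assembled in earlier sections, the argument is essentially a one-line application of \cref{deriv-equation} combined with the volume-preservation property \cref{volume-constant-ricci-flow}. The main ``hard work'' has been absorbed into the construction of the vector field $X$ and the proof of \cref{deriv-equation}; everything after that is bookkeeping.
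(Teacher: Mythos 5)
Your proof is correct and follows exactly the same path as the paper's: apply \cref{deriv-equation} with $k=1$, simplify using \cref{v-formulas} to get $|P_\phi|^2$, and combine with the volume-preservation property \cref{volume-constant-ricci-flow} to conclude monotonicity of $\mathcal{F}_{1,\phi}$. The paper states this in one line; you have simply unpacked the same chain of citations, and your remark on when the inequality is strict is a correct (if unstated in the paper's proof) observation.
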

\begin{proof}
From \cref{v-formulas,deriv-equation,volume-constant-ricci-flow}, we get 
\[L_X\mathcal{F}_{1,\phi}=\int|P_\phi|^2\mathrm{dvol}_{\widetilde{g}}\geq 0.\qedhere\]
\end{proof}
The \emph{positive weighted elliptic $k$-cone} $\Gamma_k^{\infty,+}$ is the set~\cite{Case2014sd}
\[
\Gamma_k^{\infty,+}=\left\{(g,\phi)\in Met(M) \times C^\infty(M):p_{j,\phi}>0 \text { for all } j \in\{1, \ldots, k\}\right\}
\]
We see below that $\mathcal{F}_{k,\phi}$ is monotone increasing under the gradient Ricci flow if $(g,\phi)$ lies in $\Gamma_k^{\infty,+}$, and satisfies the natural Newton-type inequality $p_{k-1,\phi}p_{k+1,\phi}\leq (p_{k,\phi})^2$.
\begin{lemma}
\label{positive-cone-increasing}
Let $(g,\phi)\in \Gamma_{s}^{\infty,+}$. Then $\mathcal{F}_{k,\phi}$ is monotone increasing under the gradient Ricci flow for $k\in\{1,\dotsm,s+1\}$, as long as $\mathcal{F}_{k-1,\phi}$ is monotone increasing, and
\begin{equation}
\begin{aligned}
\label{pk-1pk+1}
p_{k-1,\phi}p_{k+1,\phi}\leq (p_{k,\phi})^2.
\end{aligned}
\end{equation}
\end{lemma}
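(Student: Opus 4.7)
The plan is to mirror the proof of \cref{f-increasing}, evolving $\mathcal{F}_{k,\phi}$ along the Ricci flow vector field $X = v_{1,\phi}\partial_v - \partial_u$ supplied by \cref{ricci-flow}. Because $X$ preserves the weighted volume form modulo $O(u^\infty)$ by \cref{volume-constant-ricci-flow}, Leibniz's rule applied to $\mathcal{F}_{k,\phi} = \frac{1}{k!}\int_M p_{k,\phi}\,e^{-\phi}\,\mathrm{dvol}_g$, together with \cref{deriv-equation}, gives
\[
L_X \mathcal{F}_{k,\phi} = \frac{1}{k!} \int_M \bigl(p_{1,\phi} p_{k,\phi} - p_{k+1,\phi}\bigr)\, e^{-\phi}\,\mathrm{dvol}_g.
\]
Thus it suffices to establish the pointwise inequality $p_{k+1,\phi} \leq p_{1,\phi} p_{k,\phi}$.

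My strategy for the pointwise bound is to chain Newton-type inequalities down to the base case. Since $(g,\phi) \in \Gamma_s^{\infty,+}$, the quantities $p_{j,\phi}$ are strictly positive for $1 \leq j \leq s$, so the ratios $p_{j+1,\phi}/p_{j,\phi}$ are well-defined for $0 \leq j \leq s$ (with the convention $p_{0,\phi} := 1$). The hypothesis $p_{k-1,\phi}p_{k+1,\phi} \leq p_{k,\phi}^2$ rearranges immediately to $p_{k+1,\phi}/p_{k,\phi} \leq p_{k,\phi}/p_{k-1,\phi}$. I then iterate: the assumption that $\mathcal{F}_{k-1,\phi}$ is monotone increasing comes from applying the present lemma at index $k-1$, which carries with it the Newton-type hypothesis $p_{k-2,\phi}p_{k,\phi}\leq p_{k-1,\phi}^2$, and so on down the chain. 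At the bottom, the inequality $p_{0,\phi}p_{2,\phi}\leq p_{1,\phi}^2$ follows directly from the formula $v_{2,\phi} = \tfrac{1}{2}(v_{1,\phi}^2 - |P_\phi|^2)$ in \cref{v-formulas}, while \cref{f-increasing} provides the base case. Cascading these inequalities yields
\[
\frac{p_{k+1,\phi}}{p_{k,\phi}} \leq \frac{p_{k,\phi}}{p_{k-1,\phi}} \leq \cdots \leq \frac{p_{2,\phi}}{p_{1,\phi}} \leq \frac{p_{1,\phi}}{p_{0,\phi}} = p_{1,\phi},
\]
so $p_{k+1,\phi} \leq p_{1,\phi}p_{k,\phi}$ pointwise on $M$. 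Substituting into the displayed expression for $L_X \mathcal{F}_{k,\phi}$ then gives $L_X \mathcal{F}_{k,\phi} \geq 0$, as required.

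I expect the main subtlety to lie in the inductive bookkeeping of the Newton-type inequalities. The lemma's explicit hypothesis supplies only the single inequality at the current index $k$, so the chain must be assembled by invoking the lemma iteratively through the hypothesis that $\mathcal{F}_{k-1,\phi}$ is monotone increasing. Once this cascade is in hand, the conclusion reduces to the routine substitution above, and no integration by parts, Cauchy–Schwarz, or further analytic machinery is required.
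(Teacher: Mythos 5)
The paper's proof is shorter and uses the hypotheses more directly than your cascade argument, and the distinction matters: your approach, as written, has a gap. You aim to chain the Newton-type inequalities down to $j=1$, writing
\[
\frac{p_{k+1,\phi}}{p_{k,\phi}} \leq \frac{p_{k,\phi}}{p_{k-1,\phi}} \leq \cdots \leq \frac{p_{1,\phi}}{p_{0,\phi}},
\]
but the lemma hypothesizes only \emph{one} Newton-type inequality, namely $p_{k-1,\phi}p_{k+1,\phi}\leq p_{k,\phi}^2$. You acknowledge this and try to recover the missing links by arguing that ``the assumption that $\mathcal{F}_{k-1,\phi}$ is monotone increasing comes from applying the present lemma at index $k-1$, which carries with it the Newton-type hypothesis at that index.'' That is reading something into the hypothesis that is not there: the lemma only assumes $\mathcal{F}_{k-1,\phi}$ is increasing, which could be known for any number of reasons, and does not assume that all of the lower-index Newton-type inequalities hold.

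The paper avoids the cascade entirely. The monotonicity of $\mathcal{F}_{k-1,\phi}$, read off pointwise from \cref{deriv-equation}, already gives $p_{k,\phi}\leq p_{1,\phi}p_{k-1,\phi}$. Combining this with the single hypothesized inequality \cref{pk-1pk+1} yields
\[
p_{k-1,\phi}\,p_{k+1,\phi}\leq p_{k,\phi}^2\leq p_{1,\phi}\,p_{k-1,\phi}\,p_{k,\phi},
\]
and dividing by $p_{k-1,\phi}>0$ gives $p_{k+1,\phi}\leq p_{1,\phi}p_{k,\phi}$, exactly the pointwise bound you wanted. So the missing idea in your write-up is that you can use the increasingness of $\mathcal{F}_{k-1,\phi}$ directly to supply $p_{k,\phi}\leq p_{1,\phi}p_{k-1,\phi}$ in one step, rather than trying to reconstruct it from a chain of Newton inequalities you do not have. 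The setup (evolving along $X$, invoking \cref{volume-constant-ricci-flow} and \cref{deriv-equation}) is fine and matches the paper; it is the reduction to the pointwise bound that needs to be reworked along the lines just described.
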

Note that, unlike for $\sigma_{k,\phi}$~\cites{Case2016v,Case2014s} Newton-type inequalities of the form \cref{pk-1pk+1} are not always satisfied by $p_{k,\phi}$. 
\begin{example}
We deduce from \cref{einstein-volume-coefficients} that $(S^3,c)$ does not satisfy the inequality $p_{1,\phi}p_{3,\phi}\leq (p_{2,\phi})^2$.
\end{example}
However, we know from \cref{v-formulas} that this is always true for $k=1$, as $-p_{2,\phi}+p^2_{1,\phi}=|P_\phi|^2$.
\begin{proof}[Proof of \cref{positive-cone-increasing}] We assume that $\mathcal{F}_{k-1,\phi}$ is monotone increasing, and hence we deduce from \cref{deriv-equation} that $-p_{k,\phi}+p_{1,\phi}p_{k-1,\phi}\geq 0$. We deduce using \cref{pk-1pk+1} that
\[p_{k-1,\phi}p_{k+1,\phi}\leq (p_{k,\phi})^2\leq p_{1,\phi}p_{k-1,\phi}p_{k,\phi}.\]
As $p_{k-1,\phi}>0$, we have 
\[-p_{k+1,\phi}+p_{1,\phi}p_{k,\phi}\geq 0.\qedhere\]
\end{proof}
\begin{proof}[Proof of \cref{einstein-long-term-behavior}] 
Recall from \cref{einstein-ambient-metric-2} that $g_u=(1+4\mu u)g_0$ and $Ric_u=\mu g_0$, where $g_u$ and $Ric_u$ are calculated from the ambient construction for an Einstein manifold. Hence, we have \[\mathrm{Ric}_u=\mu(1+4\mu u)^{-1}g_u.\] Also recall from \cref{einstein-ambient-metric-2} that $(g_u,\phi_u)$ is an Einstein metric measure structure for all $u>-\frac{1}{4\mu}$. Hence, it follows from \cref{einstein-volume-coefficients} that \[v_{k,\phi}(u)=\frac{1}{k!}\mu^k(1+4\mu u)^{-k}\prod\limits_{j=0}^{k-1}(n-4j).\] Now recall that we have a formal solution to the gradient Ricci flow along the vector field $-\partial_u+v_{1,\phi}\partial_v$. Hence, $u$ tends to $-\frac{1}{4\mu}$ along the gradient Ricci flow, and \[
\lim\limits_{u\to -\frac{1}{4\mu}}\mu(1+4\mu u)^{-1}\to\infty.\]Note that $\prod\limits_{j=0}^{k-1}(n-4j)$ remains unchanged. Therefore $v_{k,\phi}\to \pm \infty$ depending upon the sign of $\prod\limits_{j=0}^{k-1}(n-4j)$.
\end{proof}
\begin{definition}
We define
\begin{equation}
\label{modified-elliptic-cone}
    \Lambda^{\infty,-}=\{(g,\phi):(-1)^{k+1}p_{k,\phi}\geq 0\text{ for all }k\in\mathbb{N}\}.
\end{equation}
Note that $\Lambda^{\infty,-}$ is analogous to the negative elliptic $\infty$-cone, except that the signs for each $p_{k,\phi}$ have been reversed. Moreover, we don't require strict inequality. 
\end{definition}An example of $(g,\phi)\in\Lambda^{\infty,-}$ is $(S^3,c)$, as can be seen from \cref{einstein-volume-coefficients}.\\

If $(g,\phi)\in \Lambda^{\infty,-}$, clearly $(-1)^{k+1}[-p_{k+1,\phi}+p_{1,\phi}p_{k,\phi}]\geq 0$. Hence, as we show below, the gradient Ricci flow preserves $\Lambda_k^{\infty,-}$. Moreover, if $(g,\phi)\in \Lambda^{\infty,-}$, then $(-1)^{k+1}\mathcal{F}_{k,\phi}$ is increasing along the gradient Ricci flow for all time.

\begin{lemma}
\label{lemma-shifted-cone}
If $(g,\phi)\in \Lambda^{\infty,-}$, then the gradient Ricci flow preserves $\Lambda^{\infty,-}$. Moreover, for all $k\in \mathbb{N}$, $(-1)^{k+1}\mathcal{F}_{k,\phi}$ is monotone increasing with the gradient Ricci flow for all time. 
\end{lemma}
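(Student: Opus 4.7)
The plan is to combine a direct computation of $\frac{d}{dt}(-1)^{k+1}\mathcal{F}_{k,\phi(t)}$ along the gradient Ricci flow with an invariance argument establishing that $\Lambda^{\infty,-}$ is preserved by the flow. Both ingredients together yield the monotonicity statement for all time, whereas the discussion preceding the lemma only gave the inequality at the initial moment.

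For the derivative at a single time, I invoke \cref{volume-constant-ricci-flow} to see that $L_X(e^{-\phi}\mathrm{dvol}_g)\equiv 0\bmod O(u^\infty)$, so only the factor $p_{k,\phi}=k!\,v_{k,\phi}$ varies under the flow. Applying \cref{deriv-equation} pointwise and setting $a_j:=(-1)^{j+1}p_{j,\phi(t)}$ yields
\begin{equation*}
(-1)^{k+1}\frac{d}{dt}\mathcal{F}_{k,\phi(t)}=\frac{1}{k!}\int_M\bigl[a_{k+1}+a_1\,a_k\bigr]\,e^{-\phi}\mathrm{dvol}_g.
\end{equation*}
The identity is exact because the gap between the ambient Lie derivative of $p_{k,\phi}$ along $X$ and its true Ricci-flow time derivative is a $\delta_\phi$-divergence, which vanishes after integration against $e^{-\phi}\mathrm{dvol}_g$. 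When $(g(t),\phi(t))\in\Lambda^{\infty,-}$ every $a_j\geq 0$, so the integrand is non-negative.

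The preservation of $\Lambda^{\infty,-}$ is the heart of the lemma. Differentiating the polynomial expression for $p_{k,\phi}$ supplied by \cref{linear-expansion-theorem} along the flow yields a triangular parabolic system whose principal part in $a_k$ is $\Delta_\phi a_k$ and whose zero-order reaction is $a_{k+1}+a_1\,a_k$; this is checked by hand for $k=1$ where the equation reduces to the classical $\partial_t R_\phi=\Delta_\phi R_\phi+2|P_\phi|^2$, and the same structure propagates through the recursion of \cref{linear-expansion-theorem}. Suppose for contradiction there is a first-exit time $t_0>0$ at which some $a_{k_0}$ touches zero at $x_0\in M$. At $(x_0,t_0)$ one has $a_{k_0}=0$, $\Delta_\phi a_{k_0}(x_0,t_0)\geq 0$ by the spatial minimum principle, and $a_{k_0+1}(x_0,t_0)\geq 0$ by the minimality of $t_0$ across all indices. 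The parabolic equation then forces $\partial_t a_{k_0}(x_0,t_0)\geq 0$, contradicting the supposed exit. Hence $\Lambda^{\infty,-}$ is invariant and, combined with the first integral inequality, $(-1)^{k+1}\mathcal{F}_{k,\phi(t)}$ is non-decreasing on the entire existence interval.

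The principal obstacle is the bookkeeping of an infinite triangular parabolic system in which every $a_k$ must stay non-negative uniformly in $k$. The structural input that saves the day is that the evolution of $a_k$ only couples to $a_1$, $a_k$, and $a_{k+1}$ at zero order while its diffusion lives in $a_k$ alone, which lets a joint first-exit argument run uniformly in $k$. Verifying that the principal part of each equation is indeed $\Delta_\phi$ with no spurious higher-order cross-coupling is the main technical check before the maximum principle closes the argument.
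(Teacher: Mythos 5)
The first ingredient of your proof — computing $\tfrac{d}{dt}(-1)^{k+1}\mathcal{F}_{k,\phi}$ from \cref{deriv-equation,volume-constant-ricci-flow} and rewriting it as $\tfrac{1}{k!}\int_M(a_{k+1}+a_1 a_k)e^{-\phi}\mathrm{dvol}_g$ — is exactly what the paper does, and it is correct. The divergence gap between $L_X p_{k,\phi}$ and the pointwise Ricci-flow derivative is also a legitimate observation, though the paper never needs it.

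Where you diverge from the paper, and where the gap appears, is in the invariance argument. The paper's argument is \emph{purely algebraic} and runs entirely inside the ambient space: writing $a_j=(-1)^{j+1}p_{j,\phi}$, \cref{deriv-equation} is the recursion $L_X a_j=a_{j+1}+a_1 a_j$, which has non-negative coefficients as a polynomial in the $a$-variables. By induction, $L_X$ of any polynomial with non-negative coefficients in the $a_j$'s is again such a polynomial, so $(-1)^{k+1}L_X^s p_{k,\phi}=L_X^s a_k\geq 0$ for \emph{every} $s\geq 0$ whenever $(g,\phi)\in\Lambda^{\infty,-}$. All Taylor coefficients of $(-1)^{k+1}\mathcal{F}_{k,\phi}$ along the $X$-flow are non-negative, and the cone $\Lambda^{\infty,-}$ is preserved for the same reason — with no PDE, no Laplacian, and no maximum principle anywhere. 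This is the clean route the ambient formalism is built to deliver, and you did not find it.

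The parabolic maximum-principle route you propose instead has a genuine hole. You assert that \cref{linear-expansion-theorem} gives each $a_k$ a quasilinear parabolic evolution with principal part $\Delta_\phi a_k$ and reaction $a_{k+1}+a_1 a_k$, but this is only checked for $k=1$, and there is no structural reason to expect it for $k\geq 2$. The higher $v_{k,\phi}$ are built from $P_\phi$, the weighted Bach tensor, and the extended obstruction tensors $(\Omega_\phi)^{(j)}$, whose time evolutions under the flow are not simple heat equations; \cref{linear-expansion-theorem} is an algebraic expansion in $u$, not a statement about the parabolic structure of the $t$-evolution. The only thing that is forced by the integral identity is that the difference between $\partial_t p_{k,\phi}$ and $L_X p_{k,\phi}$ is a weighted divergence $\delta_\phi$ of \emph{some} tensor depending on the $a_j$'s and their derivatives — it need not take the form $\Delta_\phi a_k$, and if it doesn't, your diffusion term and the spatial-minimum inequality disappear. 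Beyond that, the first-exit argument over an infinite family $\{a_k\}_{k\in\mathbb{N}}$ needs care (the infimum of exit times may not be attained, and a vanishing-point argument only yields $\partial_t a_{k_0}\geq 0$, not a contradiction, without a strong-maximum-principle or $\epsilon$-perturbation step). In short: the computation you lead with is right and matches the paper, but the preservation of $\Lambda^{\infty,-}$ should be extracted from the positivity of the Lie-derivative recursion itself, not from a parabolic system whose existence you have not established.
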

\begin{proof}
We see using \cref{deriv-equation,modified-elliptic-cone} that 
\begin{equation*}
(-1)^{k+1}L^s_X p_{k,\phi}\geq 0
\end{equation*}
for all $s\in\mathbb{N}\cup \{0\}$.
\end{proof}

\section{The $\mathcal{W}$--gradient Ricci flow} 
\label{sec:w-funct}
In this section, we prove \cref{w-theorem,einstein-w-flow}.

Consider the following system of equations studied by Perelman~\cite{Perelman1}*{(3.3)}
\begin{equation}
\label{eq-1}
\begin{aligned}
\partial_t g_{ij}&=-2\mathrm{Ric}_{ij},\\
\partial_t \phi&=-R-\Delta\phi+|\nabla\phi|^2+\frac{n}{2\tau},\\
\partial_t\tau&=-1.
\end{aligned}    
\end{equation}
Pulling $(g,\phi,\tau)$ in \cref{eq-1} back by the family of diffeomorphisms generated by $\nabla\phi$, we get the $\mathcal{W}$--gradient Ricci flow, defined as
\begin{equation}
\label{eq-2}
\begin{aligned}
 \partial_t g_{ij}&=-2(\mathrm{Ric}+\nabla^2\phi),\\
 \partial_t \phi&=-R-\Delta\phi+\frac{ n}{2\tau},\\
 \partial_t \tau&=-1.
\end{aligned}
\end{equation}
Perelman proves that the $\mathcal{W}$ functional is monotone increasing under the Ricci flow, as defined by \cref{eq-1}. He does so by proving that $\mathcal{W}$ is monotone increasing under the $\mathcal{W}$--gradient Ricci flow defined by \cref{eq-2}, and that it is unchanged when we pull back $(g,\phi,\tau)$ by the one-parameter family of diffeomorphisms generated by $\nabla\phi$. 

Let $\tau>0$.
We now show that there exists a vector field in the space $(\widetilde{\mathcal{G}},\widetilde{g}(s),\widetilde{\phi}(s),(2\tau(s))^{-1})\times (-\epsilon,\tau)$, along which we have a solution $\mathcal{W}$--gradient Ricci flow. 
\begin{proof}[Proof of \cref{w-theorem}]
In an ambient space $(\widetilde{\mathcal{G}}, \widetilde{g},\widetilde{\phi},(2\tau)^{-1})$, consider the vector field
\begin{equation*}
\label{lambda-vector-field}
X:=(1-(2\tau)^{-1} u)^{-1}(v_{1,\phi})_u\partial_v-(1-(2\tau)^{-1} u)\partial_u,
\end{equation*}
where $(v_{1,\phi})_u=-\frac{1}{2}[R+2\Delta\phi-|\nabla\phi|^2+\tau^{-1} (\phi-n)]$.
We have 
\begin{equation}
\label{eq-3}
\begin{aligned}
 L_X g_{ij}&=-2(\mathrm{Ric}+\nabla^2\phi)+\frac{1}{\tau}g,\\
L_X \phi&=-R-\Delta\phi+\frac{ n}{2\tau}.
\end{aligned}
\end{equation}
Now consider $(\widetilde{\mathcal{G}},\widetilde{g}(s),\widetilde{\phi}(s),{(2\tau(s))}^{-1}) \times (-\epsilon,\tau)$, with $s\in(-\epsilon,\tau)$ being the parameter. For each $s$, we have that $(\widetilde{\mathcal{G}},\widetilde{g}(s),\widetilde{\phi}(s),({2\tau(s)})^{-1})\times \{s\}$ is a normal global weighted ambient space. Given a manifold with density \[(M^n,g,\phi,\lambda)\subset(\widetilde{\mathcal{G}},\widetilde{g}(s),\widetilde{\phi}(s),(2\tau(s))^{-1}),\] we have local normal coordinates $\{v,x^i,u(s)\}$, and a vector $X$, such that \cref{eq-3} is satisfied. In terms of the set of local coordinates $\{v,x^i,u(s),s\}$ on $(\widetilde{\mathcal{G}},\widetilde{g}(s),\widetilde{\phi}(s),{(2\tau(s))}^{-1})\times (-\epsilon,\tau)$, let
\begin{equation}
\label{eq-4}
\begin{aligned}
 \tau(s)&=(1-s/\tau)\tau,\\
 g_{u(s)}(s)&=(1-s/\tau)g_u,\\
 \phi_{u(s)}(s)&=\phi_u.
\end{aligned}
\end{equation}
Note that $(g_{u(s)}(s),\phi_{u(s)}(s))$ does indeed help form the normal ambient metric for each $(\widetilde{\mathcal{G}},\widetilde{g}(s),\widetilde{\phi}(s),(2\tau(s))^{-1})$. In particular, $u(s)$ scales as $(1-s/\tau)u$, while $v$ and $x^i$ remain constant with $s$. We have from \cref{eq-4},
\begin{equation}
\label{tau0-derivative-eqns}
\begin{aligned}
{\partial_s}\tau(s)&=-1,\\
{\partial_s} g_{u(s)}(s)&=-g_u=-\frac{1}{\tau(s)}g_{u(s)}(s).
\end{aligned}    
\end{equation}
From \cref{tau0-derivative-eqns}, for 
\begin{equation*}
Y:=X+\partial_s,
\end{equation*}
we have for all $s$,
\begin{equation*}
\begin{aligned}
 L_Y g_{ij}&=-2(\mathrm{Ric}+\nabla^2\phi),\\
 L_Y \phi&=-R-\Delta\phi+\frac{n}{2\tau},\\
 L_Y \tau&=-1,
\end{aligned}
\end{equation*}
which is the same as \cref{eq-1}. Therefore, the gradient Ricci flow is generated by $Y$ in $\widetilde{\mathcal{G}}\times (-\epsilon,\tau)$. 
\end{proof}
Note that the coordinate $u\leq 0$ in these ambient half-spaces. Hence, $(1-(2\tau(s))^{-1}u)\geq 1$, and the vector $Y$ makes sense throughout the space. 
Also, observe that the gradient Ricci flow spacetime, as defined in \cref{def-spacetime}, lies along $\partial_t$ and hence transverse to each ambient space $(\widetilde{\mathcal{G}},\widetilde{g}(s),\widetilde{\phi}(s),(2\tau(s))^{-1})$.

We have \[\mathcal{W}_{k,\phi}=\int \tau^k v_{k,\phi}\subba.\]Perelman proved~\cite{Perelman1} $\mathcal{W}(cg,\phi,c\tau)=\mathcal{W}(g,\phi,\tau)$. We prove the same property for all $\mathcal{W}_{k,\phi}$'s.
\begin{lemma}
We have
\begin{equation}
\label{w-invariant}
    \mathcal{W}_{k,\phi}(cg,\phi,c\tau)=\mathcal{W}_{k,\phi}(g,\phi,\tau)
\end{equation}
for all $k\geq 1$.
\end{lemma}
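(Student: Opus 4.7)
The plan is to establish how each factor in the integrand of $\mathcal{W}_{k,\phi}$ scales under the dilation $(g,\tau)\mapsto(cg,c\tau)$ with $\phi$ held fixed, and then observe that the scaling weights cancel exactly. The decisive ingredient is the transformation law of the renormalized volume coefficient $v_{k,\phi}$, which I would derive from the uniqueness part of \cref{ambient-metric-theorem}.

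First I would record the elementary scaling behavior: since $\lambda=(2\tau)^{-1}$, we have $\widehat{\lambda}=\lambda/c$; the Ricci tensor and $\nabla^2\phi$ are invariant as $(0,2)$-tensors under $g\mapsto cg$, so $\widehat{P}_\phi=\widehat{\mathrm{Ric}}_\phi-\widehat{\lambda}\,\widehat{g}=P_\phi$, while $\widehat{R}=R/c$, $|\widehat{\nabla}\phi|_{\widehat{g}}^2=|\nabla\phi|^2/c$, and therefore $\widehat{Y}_\phi=Y_\phi/c$. Likewise $\mathrm{dvol}_{cg}=c^{n/2}\mathrm{dvol}_g$ and $(4\pi c\tau)^{-n/2}=c^{-n/2}(4\pi\tau)^{-n/2}$.

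Next I would show that the normal ambient metric for $(cg,\phi,c\tau)$ is obtained from that for $(g,\phi,\tau)$ by the change of coordinate $\widetilde{u}=cu$ (with $v$ and $x^i$ unchanged), together with the identifications $\widehat{g}_{\widetilde{u}}=c\,g_{\widetilde{u}/c}$ and $\widehat{\phi}_{\widetilde{u}}=\phi_{\widetilde{u}/c}$. To verify this is the correct ansatz, I would check that it satisfies the defining relations in \cref{sec:formal-theory}: for instance, at leading order $\widehat{g}'_0=2\widehat{P}_\phi=2P_\phi$ matches $\partial_{\widetilde{u}}(cg_{\widetilde{u}/c})|_{\widetilde{u}=0}=g'_0=2P_\phi$, and $\widehat{\phi}'_0=-\widehat{Y}_\phi=-Y_\phi/c$ matches $\partial_{\widetilde{u}}\phi_{\widetilde{u}/c}|_{\widetilde{u}=0}=\phi'_0/c=-Y_\phi/c$. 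The iterative construction in the proof of \cref{normal-ambient-metric-exists} is equivariant under $(g,\lambda,u)\mapsto(cg,\lambda/c,cu)$, so uniqueness modulo $O(u^\infty)$ forces the two ambient metrics to agree in these rescaled coordinates. One then checks that $\widehat{\widetilde{\phi}}=\widehat{\phi}_{\widetilde{u}}-v+\widehat{\lambda}\widetilde{u}v=\phi_u-v+\lambda uv=\widetilde{\phi}$, so that $e^{-\widehat{\widetilde{\phi}}}/e^{-\phi}=e^{-\widetilde{\phi}}/e^{-\phi}$, and $\det\widehat{g}_{\widetilde{u}}/\det\widehat{g}=\det g_u/\det g$. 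Hence $\widehat{v}_\phi(\widetilde{u})=v_\phi(\widetilde{u}/c)$, and differentiating $k$ times in $\widetilde{u}$ at $\widetilde{u}=0$ yields
\begin{equation*}
v_{k,\widehat{\phi}}=c^{-k}v_{k,\phi}.
\end{equation*}

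With this in hand the computation is a short bookkeeping:
\begin{equation*}
\mathcal{W}_{k,\phi}(cg,\phi,c\tau)=\int_M(c\tau)^k\,v_{k,\widehat{\phi}}\,(4\pi c\tau)^{-n/2}e^{-\phi}\mathrm{dvol}_{cg},
\end{equation*}
and the combined scale factor is $c^k\cdot c^{-k}\cdot c^{-n/2}\cdot c^{n/2}=1$, so the integrand is pointwise invariant and \cref{w-invariant} follows. The only genuinely nontrivial step is the equivariance of the formal ambient-metric construction under the rescaling, which is the main obstacle; however, this reduces to checking that each term in the iterative obstruction equations of \cref{sec:formal-theory} scales homogeneously when $(g,\lambda,u)\mapsto(cg,\lambda/c,cu)$, which is immediate from the scaling of $P_\phi$ and $Y_\phi$ recorded above.
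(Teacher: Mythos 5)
Your proof is correct and follows essentially the same route as the paper: identify the coordinate diffeomorphism $\overline{u}=cu$ relating the two normal ambient metrics, deduce that $v_\phi$ is pointwise preserved under this reparametrization so that $v_{k,\widehat{\phi}}=c^{-k}v_{k,\phi}$, and then observe that the scaling factors in $\tau^k$, $(4\pi\tau)^{-n/2}$, and $\mathrm{dvol}_g$ cancel. The paper states the coordinate relation and the scale-invariance of $v_\phi$ without proof; you have supplied the verification (matching the low-order expansion and invoking the uniqueness in \cref{normal-ambient-metric-exists}), which is a welcome elaboration but not a different argument.
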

\begin{proof}
Consider the re-scaling $(\overline{g},\overline{\phi},\overline{\tau})=(c g,\phi,c\tau)$. The normal coordinates $(\overline{v},\overline{x}^i,\overline{u})$ and $(v,x^i,u)$ are related by the diffeomorphism $\overline{v}=v,\overline{x}^i=x^i$ and $\overline{u}=cu$. As $v_\phi$ is also scale invariant, we get $\overline{v}_{k,\phi}=c^{-k}v_{k,\phi}$. Hence $\tau^k v_{k,\phi}$ is invariant under the re-scaling. It is easy to check that $(4\pi\tau)^{-\frac{n}{2}}e^{-\phi}\mathrm{dvol}_g$ is invariant under this scaling too, thereby implying the same for $\mathcal{W}_{k,\phi}(g,\phi,\tau)$.
\end{proof}
We infer from \cref{w-invariant} that
\begin{equation*}
    \mathcal{W}_{k,\phi}((1-s/\tau)g,\phi,(1-s/\tau)\tau)=\mathcal{W}_{k,\phi}(g,\phi,\tau).
\end{equation*}
Hence, the $\mathcal{W}$--functional remains unchanged along $\partial_s$. Therefore, to study the evolution of the $\mathcal{W}$--functional along the Ricci flow, we only need to study its evolution along $X$. It follows that we have 
\begin{equation*}
\label{W-evolution-ricci-flow}
    \partial_t \mathcal{W}_{k,\phi}=\frac{1}{k!}\int_M \tau^k (-p_{k+1,\phi}+p_{1,\phi}p_{k,\phi})(4\pi\tau)^{-n/2}e^{-\phi}\mathrm{dvol}_g,
\end{equation*}
where $p_{j,\phi}=j! v_{j,\phi}$.
\begin{lemma}
\label{w-increasing}
$\mathcal{W}_{1,\phi}$ is monotone increasing along the gradient Ricci flow.
\end{lemma}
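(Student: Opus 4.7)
The plan is to mimic the proof of \cref{f-increasing}, using the $\mathcal{W}$-analogue of the evolution formula derived just above the statement. Specifically, the identity
\begin{equation*}
\partial_t \mathcal{W}_{k,\phi}=\frac{1}{k!}\int_M \tau^k (-p_{k+1,\phi}+p_{1,\phi}p_{k,\phi})(4\pi\tau)^{-n/2}e^{-\phi}\mathrm{dvol}_g
\end{equation*}
already reduces the monotonicity question for $\mathcal{W}_{1,\phi}$ to checking the sign of the pointwise integrand $-p_{2,\phi}+p_{1,\phi}^2$. This identity is the $\mathcal{W}$-flow counterpart of \cref{deriv-equation}, and its derivation in turn rests on the existence of the Ricci flow vector field $Y$ from \cref{w-theorem} together with the fact that the weighted volume form $(4\pi\tau)^{-n/2}e^{-\phi}\mathrm{dvol}_g$ is preserved along $Y$ (the analogue of \cref{volume-constant-ricci-flow} for the $\mathcal{W}$-flow, which follows directly from $L_Y\widetilde{\phi}=-R-\Delta\phi+\frac{n}{2\tau}$ and $L_Y\tau=-1$), combined with the scale-invariance identity \cref{w-invariant} to kill the $\partial_s$-contribution.

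Taking $k=1$ in the formula above and recalling $p_{k,\phi}=k!\,v_{k,\phi}$, so that $p_{1,\phi}=v_{1,\phi}$ and $p_{2,\phi}=2v_{2,\phi}$, I would substitute the explicit expressions from \cref{v-formulas}, namely $v_{2,\phi}=\tfrac12(v_{1,\phi}^2-|P_\phi|^2)$, to compute
\begin{equation*}
-p_{2,\phi}+p_{1,\phi}^2 = -2v_{2,\phi}+v_{1,\phi}^2 = |P_\phi|^2.
\end{equation*}
Substituting back yields
\begin{equation*}
\partial_t \mathcal{W}_{1,\phi}=\int_M \tau\,|P_\phi|^2\,(4\pi\tau)^{-n/2}e^{-\phi}\mathrm{dvol}_g \geq 0,
\end{equation*}
since $\tau>0$, which establishes monotonicity.

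There is essentially no obstacle here, as all the heavy lifting (construction of $Y$, invariance of the weighted measure, the scaling identity \cref{w-invariant}) has already been carried out. The only thing to verify carefully is the algebraic identity $-p_{2,\phi}+p_{1,\phi}^2=|P_\phi|^2$, which is a direct consequence of the second formula in \cref{v-formulas}, exactly parallel to the step in \cref{f-increasing} where the analogous cancellation produces $|P_\phi|^2$ in the integrand.
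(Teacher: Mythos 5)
Your proof is correct and takes essentially the same route as the paper: both invoke the evolution formula that comes from \cref{deriv-equation} (together with the $\mathcal{W}$-flow machinery of \cref{w-theorem,w-invariant}) and then use the algebraic identity $-p_{2,\phi}+p_{1,\phi}^2=|P_\phi|^2$ from \cref{v-formulas} to identify the integrand as a manifest square. The only cosmetic difference is that you retain the factor of $\tau$ coming from $\tau^k$ at $k=1$, which the paper's displayed integral suppresses; this has no effect on the sign conclusion since $\tau>0$.
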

\begin{proof}
    We see from \cref{v-formulas,deriv-equation} that 
\begin{equation}
\label{ric-derivative-w1}
L_X \mathcal{W}_{1,\phi}=\int_M |P_\phi|^2\subba\geq 0.\qedhere
\end{equation}
\end{proof}
We note from \cref{ric-derivative-w1} that $\mathcal{W}_{1,\phi}$ is not strictly monotone increasing if and only if $P_\phi=0$, which is equivalent to the fact that $(g,\phi)$ is a shrinking gradient Ricci soliton. 

We now prove that all $\mathcal{W}_{k,\phi}$ functionals are constant along the Ricci flow for all $k$ for shrinking solitons.
\begin{proof}[Proof of \cref{einstein-w-flow}]
For a shrinking soliton, we calculate that
\begin{align*}
g(t)&=\left(1-t/\tau\right)g,\\
\phi(t)&=\phi,\\
\tau(t)&=\tau-t,
\end{align*}
is the solution to \cref{eq-2} for shrinking solitons that satisfy \[\mathrm{Ric}+\nabla^2\phi=\frac{1}{2\tau}g.\]
Also,
\begin{equation*}
\label{vphi-einstein-w}
v_\phi=e^{au},    
\end{equation*}
where $a=Y_\phi(t)$. It follows from \cref{g'phi'} that for shrinking solitons, $Y_\phi(t)=(\tau-t)^{-1}c'$ for some $c'\in\mathbb{R}$. Hence, \[[\tau(t)]^k v_{k,\phi}(t)=\frac{1}{k!}(c')^k,\] which implies that $\mathcal{W}_{k,\phi}$ remains constant for all $k$ for shrinking solitons. 

The converse follows from \cref{ric-derivative-w1}.
\end{proof}
Note that $c'\leq 0$~\cites{Yokota2008add}. Hence, $\mathcal{W}_{k,\phi}$ are of the sign $(-1)^k$ for shrinking solitons.

\cref{positive-cone-increasing,lemma-shifted-cone} are true for $\mathcal{W}_{k,\phi}$ too. We re-state them here without proof for completeness.
\begin{lemma}
Let $(g,\phi)\in \Gamma_{s}^{\infty,+}$. Then $\mathcal{F}_{k,\phi}$ is monotone increasing under the gradient Ricci flow for $k\in\{1,\dotsm,s+1\}$, as long as $\mathcal{F}_{k-1,\phi}$ is monotone increasing, and
\begin{equation*}
\begin{aligned}
p_{k-1,\phi}p_{k+1,\phi}\leq (p_{k,\phi})^2.
\end{aligned}
\end{equation*}
\end{lemma}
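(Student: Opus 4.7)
The plan is to repeat essentially verbatim the proof of \cref{positive-cone-increasing}, since the lemma being restated is identical in content and the relevant flow vector field $X = v_{1,\phi}\partial_v - \partial_u$ and associated tooling (\cref{deriv-equation} and \cref{volume-constant-ricci-flow}) have already been set up. The first step is to compute the variation of $\mathcal{F}_{k,\phi}$ along $X$. Writing $\mathcal{F}_{k,\phi} = \frac{1}{k!}\int_M p_{k,\phi}\,e^{-\phi}\mathrm{dvol}_g$ and using the fact that $X$ preserves $e^{-\widetilde{\phi}}\mathrm{dvol}_{\widetilde{g}}$ modulo $O(u^\infty)$, the only contribution comes from differentiating the integrand, and \cref{deriv-equation} yields
$$L_X \mathcal{F}_{k,\phi} = \frac{1}{k!}\int_M\bigl(-p_{k+1,\phi} + p_{1,\phi}\,p_{k,\phi}\bigr)\,e^{-\phi}\mathrm{dvol}_g.$$

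The second step is to establish pointwise non-negativity of the integrand under the hypotheses. The assumption that $\mathcal{F}_{k-1,\phi}$ is monotone increasing, interpreted pointwise through \cref{deriv-equation}, delivers $-p_{k,\phi} + p_{1,\phi}\,p_{k-1,\phi}\geq 0$, i.e.\ $p_{k,\phi}\leq p_{1,\phi}\,p_{k-1,\phi}$. Combining with the Newton-type inequality $p_{k-1,\phi}\,p_{k+1,\phi}\leq (p_{k,\phi})^2$, I obtain the chain
$$p_{k-1,\phi}\,p_{k+1,\phi}\leq (p_{k,\phi})^2 \leq p_{1,\phi}\,p_{k-1,\phi}\,p_{k,\phi}.$$
Since $(g,\phi)\in\Gamma_s^{\infty,+}$ ensures $p_{k-1,\phi}>0$ for $k-1\leq s$, division by $p_{k-1,\phi}$ yields $-p_{k+1,\phi}+p_{1,\phi}\,p_{k,\phi}\geq 0$ pointwise, and integration completes the argument.

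The only delicate point is book-keeping on the index range: the positivity $p_{k-1,\phi}>0$ needed to divide through requires $k-1\leq s$, equivalently $k\leq s+1$, which matches the stated range $k\in\{1,\dots,s+1\}$ precisely. The base case $k=1$ is just \cref{f-increasing}. Since the conclusion is a pointwise inequality between densities and the volume form is preserved by $X$ (up to $O(u^\infty)$), no conceptual obstacle arises beyond this indexing check; the main work has already been done in \cref{f-increasing,positive-cone-increasing}, and the present lemma is essentially a re-packaging for the $\mathcal{F}$--flow within the $\mathcal{W}$--flow section.
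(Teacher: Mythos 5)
Your proposal is correct and follows essentially the same route as the paper's proof of \cref{positive-cone-increasing}: pointwise reinterpretation of the monotonicity of $\mathcal{F}_{k-1,\phi}$ via \cref{deriv-equation}, the chain $p_{k-1,\phi}p_{k+1,\phi}\leq (p_{k,\phi})^2\leq p_{1,\phi}p_{k-1,\phi}p_{k,\phi}$, and division by $p_{k-1,\phi}>0$. The paper indeed states this lemma without proof as a restatement of \cref{positive-cone-increasing}, and your version even corrects the ``monotone decreasing'' typo in the original argument.
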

\begin{lemma}
If $(g,\phi)\in \Lambda^{\infty,-}$, then for all $k\in \mathbb{N}$, $(-1)^{k+1}\mathcal{W}_{k,\phi}$ is monotone increasing with the gradient Ricci flow for all time.     
\end{lemma}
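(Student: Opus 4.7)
The plan is to mirror the proof of \cref{lemma-shifted-cone}, which handled the $\mathcal{F}_{k,\phi}$ case, now exploiting the vector field decomposition $Y = X + \partial_s$ from \cref{w-theorem} together with the $s$-scale invariance of $\mathcal{W}_{k,\phi}$ established in \cref{w-invariant}. First I would recall that \cref{w-invariant} implies $\mathcal{W}_{k,\phi}$ is constant along $\partial_s$, so its evolution under the full $\mathcal{W}$-gradient Ricci flow (generated by $Y$) reduces to its evolution along $X$. Combining this with the explicit evolution formula
\begin{equation*}
\partial_t \mathcal{W}_{k,\phi} = \frac{1}{k!}\int_M \tau^k \left(-p_{k+1,\phi} + p_{1,\phi}\, p_{k,\phi}\right) (4\pi\tau)^{-n/2} e^{-\phi}\mathrm{dvol}_g,
\end{equation*}
which was derived immediately after \cref{w-theorem}, reduces the problem to a pointwise sign analysis of the integrand.

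Next I would carry out the pointwise sign argument. Assuming $(g,\phi) \in \Lambda^{\infty,-}$, the defining condition \cref{modified-elliptic-cone} gives $(-1)^{k+1} p_{k,\phi} \geq 0$ for every $k$. In particular $p_{1,\phi} \geq 0$ (the $k=1$ case), while $-p_{k+1,\phi}$ and $p_{1,\phi}\, p_{k,\phi}$ both carry the sign $(-1)^{k+1}$ because $p_{k+1,\phi}$ has sign $(-1)^k$ and $p_{1,\phi}\, p_{k,\phi}$ has sign $(-1)^{k+1}$. Hence $(-1)^{k+1}\bigl(-p_{k+1,\phi} + p_{1,\phi}\, p_{k,\phi}\bigr) \geq 0$ pointwise, which yields the desired monotonicity of $(-1)^{k+1}\mathcal{W}_{k,\phi}$ at the initial time.

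To upgrade ``at $t=0$'' to ``for all time'', I would use the iterated identity $\restr{L_X^{\, s} p_{k,\phi}(v,x,u)}{p}$ obtained by induction from \cref{deriv-equation}. The key observation is that each application of $L_X$ to a quantity of sign $(-1)^{k+1}$ produces a linear combination of $p_{k+1,\phi}$ and $p_{1,\phi} p_{k,\phi}$ which, by the same sign count as above, retains sign $(-1)^{k+1}$. Iterating shows $(-1)^{k+1} L_X^{\, s} p_{k,\phi} \geq 0$ for every $s \in \mathbb{N}\cup\{0\}$, so $\Lambda^{\infty,-}$ is preserved along the $X$-flow, and therefore (by the $\partial_s$-invariance from step one) along the full $\mathcal{W}$-gradient Ricci flow. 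The pointwise sign of the integrand then persists for all $t$, which gives monotonicity throughout the lifetime of the flow.

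The main obstacle will be cleanly justifying the iterated sign claim for arbitrary $s$. Naively one might worry about cross-terms appearing when $L_X$ lands on the factor $p_{1,\phi}$ in $p_{1,\phi}\, p_{k,\phi}$, generating quadratic combinations whose signs need to be tracked. I would address this by formalising an induction on $s$ in which the inductive hypothesis is the stronger statement that every iterated Lie derivative $L_X^{\, s} p_{k,\phi}$ expands as a polynomial in $\{p_{j,\phi}\}$ whose every monomial $p_{j_1,\phi}\cdots p_{j_r,\phi}$ carries sign $(-1)^{\,j_1+\cdots+j_r+r}$ and non-negative coefficient -- a pattern that is preserved under one more application of $L_X$ by \cref{deriv-equation}. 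This structural form of the recursion sidesteps any delicate cancellation and makes the preservation of $\Lambda^{\infty,-}$ transparent.
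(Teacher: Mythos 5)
Your proof follows essentially the same route as the paper's: the paper states this lemma as a direct analogue of \cref{lemma-shifted-cone}, whose proof shows $(-1)^{k+1}L_X^s p_{k,\phi}\geq 0$ for all $s\geq 0$ using \cref{deriv-equation} and \cref{modified-elliptic-cone}, which is exactly your argument once you have reduced to $X$-evolution via the $\partial_s$-invariance from \cref{w-invariant}. One minor caveat on the phrasing of your inductive hypothesis: the expansion of $L_X^s p_{k,\phi}$ in monomials of the $p_{j,\phi}$'s does not literally have all non-negative coefficients (already $L_X p_{k,\phi}=-p_{k+1,\phi}+p_{1,\phi}p_{k,\phi}$ carries coefficient $-1$ on $p_{k+1,\phi}$); the bookkeeping is cleanest if you substitute $q_{j,\phi}:=(-1)^{j+1}p_{j,\phi}\geq 0$, which turns \cref{deriv-equation} into $L_X q_{k,\phi}=q_{k+1,\phi}+q_{1,\phi}q_{k,\phi}$ with genuinely non-negative coefficients, whence $(-1)^{k+1}L_X^s p_{k,\phi}=L_X^s q_{k,\phi}\geq 0$ follows immediately for all $s$.
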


\section{Weighted GJMS operators}
\label{sec:gjms}
In this section, we construct weighted GJMS operators, and prove \cref{gjmstheorem}.

The space of conformal densities of $\cmG$ of weight $w$, denoted as $\mathcal{E}[w]$, is defined as \[\mathcal{E}[w]=\{f:\mathcal{C}\to C^\infty(M)\mid f(\phi+v)(x)=e^{wv}f(\phi)(x)\}.\]
Let $\lambda=0$. Given $f\in C^\infty (M)$, let $e^{wv}f_u\in\mathcal{E}[w]$, such that $f_0=f$.
We have at $u=0$,
\begin{equation}
\label{ambientlaplacian}
\widetilde{\Delta}_{\phi} e^{w v}f_u=e^{w v}\big[\Delta_\phi f_u+\frac{1}{2}R_{\phi} wf_u+(2w+1)f^\prime_u\big]
.    
\end{equation}
For $w=-\frac{1}{2}$, we can rewrite \cref{ambientlaplacian} at $u=0$ as
\begin{equation*}
\widetilde{\Delta}_{\phi} e^{-\frac{1}{2} v}f=e^{-\frac{1}{2} v}\big[\Delta_\phi-\frac{1}{4}R_{\phi} \big]f.
\end{equation*}

We observe that $\widetilde{\Delta}^{k}_{\phi}$ maps $\mathcal{E}[-1/2]\to \mathcal{E}[-1/2]$ for all $k\in\mathbb{N}$. Also, we see from \cref{ambientlaplacian} that $e^{\frac{1}{2}v}\restr{\widetilde{\Delta}^{k}_{\phi}}{u=0} e^{-\frac{1}{2}v}f_u$ is independent of the choice of extension $e^{-\frac{1}{2}v}f_u$ of $e^{-\frac{1}{2}v}f\in \mathcal{E}[-1/2]$ to $\cmG$. Hence, 
\[L_{2k,\phi}f:=e^{\frac{1}{2}v}\restr{\widetilde{\Delta}^{k}_{\phi}}{u=0} e^{-\frac{1}{2}v} f_u\] is a well-defined operator on $C^\infty(M)$.

\cref{ambientlaplacian} implies that \[\widetilde{\Delta}_\phi^ke^{-\frac{1}{2}v}f_u=e^{-\frac{1}{2}v}\big(\Delta_\phi-\frac{1}{4}R_\phi\big)^kf_u.\] Hence, $L_{2k,\phi}=\left(\Delta_\phi-\frac{1}{4}R_\phi\right)^k$, which is formally self-adjoint with the leading order term $\Delta_\phi^k$.

\section{Examples}
\label{sec:examples}
In this section, we provide examples of the ambient space associated with Einstein manifolds, gradient Ricci solitons or locally conformally flat manifolds, thereby proving \cref{examples-theorem,second-examples-theorem}.

First, we study gradient Ricci solitons of the form of \cref{grs-definition}.
\subsection{Gradient Ricci solitons}
\label{gradient-shrinking-solitons}
Gradient Ricci solitons have particularly simple expressions for $g'$ and $\phi'$.
\begin{lemma}
Gradient Ricci solitons satisfy
\begin{equation}
\label{gradient-soliton-g'phi'}
g^\prime_{ij}=0,\quad \phi^\prime= \frac{1}{2}(n\lambda - c) 
\end{equation}
for some $c\in\mathbb{R}$.
\end{lemma}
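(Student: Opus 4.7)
The plan is to read off both identities directly from the two scalar constraints \cref{ijcomponent} and \cref{scomponent} that every normal ambient metric must satisfy at $u=0$, combined with the two defining features of a soliton in this setting: $P_\phi=0$ and $F_\phi=c$ for some constant $c\in\mathbb{R}$ (the latter already recorded in \cref{sec:manifolds-with-density}).

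First I would invoke \cref{ijcomponent}, which says $(\widetilde{P}_\phi)_{ij}=(P_\phi)_{ij}-\tfrac{1}{2}g'_{ij}\equiv 0 \bmod O(u^\infty)$, and evaluate at $u=0$. Since $(P_\phi)_{ij}=0$ on a soliton by definition, this immediately gives $g'_{ij}=0$. In particular, the trace $g^{ij}g'_{ij}$ vanishes.

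Then I would feed this into \cref{scomponent}, namely $\widetilde{F}_\phi=-\tfrac{1}{2}g^{ij}g'_{ij}+F_\phi+2\phi'-n\lambda\equiv 0 \bmod O(u^\infty)$, at $u=0$. The first term is zero by the previous step, while the earlier lemma on solitons supplies $F_\phi=c$ with $c$ constant. Rearranging yields $\phi'=\tfrac{1}{2}(n\lambda-c)$, exactly the second claimed formula. There is no real obstacle here: both scalar identities were derived in \cref{sec:formal-theory} with no soliton hypothesis in sight, and the entire proof amounts to one substitution after combining them with $P_\phi=0$ and the constancy of $F_\phi$.
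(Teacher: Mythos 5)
Your approach is essentially the paper's approach: both rely on substituting the soliton hypothesis $P_\phi=0$ into the expansion constraints derived from the normal-form ambient metric. The paper's proof is the one-liner ``This follows from \cref{grs-definition,g'phi'},'' and \cref{g'phi'} itself is obtained from precisely \cref{ijcomponent,scomponent}, so you are unfolding the same computation one level deeper.

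That said, there is a wrinkle you should be aware of. Reading \cref{scomponent} literally as printed, you conclude $\phi'=\tfrac{1}{2}(n\lambda - F_\phi)$, i.e.\ you identify the constant $c$ with $F_\phi$. The paper instead invokes \cref{g'phi'}, which says $\phi'=-Y_\phi$, and for a soliton one computes $Y_\phi=\tfrac{1}{2}F_\phi$, so the paper's value is $\phi'=-\tfrac{1}{2}F_\phi$. These two answers differ by $\tfrac{n\lambda}{2}$. A direct recomputation of $\widetilde{F}_\phi$ at $u=0$ using the Christoffel symbols in \cref{christoffel symbols for the undetermined metric} gives
\begin{equation*}
\widetilde{F}_\phi=-\tfrac{1}{2}g^{ij}g'_{ij}+F_\phi+2\phi'
\end{equation*}
without the extra $-n\lambda$, which is consistent with \cref{g'phi'} and with the value of $c$ the paper uses later in the proof of \cref{second-derivative-theorem}, namely $c=\Delta\phi-|\nabla\phi|^2+2\lambda\phi=F_\phi+n\lambda$. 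So the printed $-n\lambda$ in \cref{scomponent} appears to be a typo, and your proof inherits it. Since the lemma only asserts $\phi'=\tfrac{1}{2}(n\lambda-c)$ for \emph{some} $c\in\mathbb{R}$, your argument does establish the statement as written; but your identification $c=F_\phi$ is inconsistent with the constant the paper carries forward, so worth flagging if you use this $c$ downstream.
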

\begin{proof}
This follows from \cref{grs-definition,g'phi'}.
\end{proof}
Note that in $\mathcal{C}_1$, we have $\phi^\prime=\frac{1}{2}(n\lambda - c) \geq 0$. This is because $v_{k,\phi}\leq 0$ for gradient shrinking solitons in $\mathcal{C}_1$ for all $k\in\mathbb{N}$,~\cites{Yokota2008add,Yokota2008}, and $v_{1,\phi}=-\phi^\prime$. 
\begin{proof}[Proof of \cref{Theorem 1.2-1} of \cref{examples-theorem}]
Let $(g_u,\phi_u)$ be of the form of \cref{quadratic-expression} such that $(g_0,\phi_0)=(g,\phi)$. We obtain from \cref{grs-definition} that $(g_u,\phi_u)=(g,\phi+2^{-1}(n\lambda-c) u)$. Now set $(\widetilde{g},\widetilde{\phi})=(2dudv+g_u,\phi_u-v+\lambda uv)$. Plugging these into \cref{iterative-formulas-ijf} yields $\widetilde{P}_\phi=0$ and $\widetilde{F}_\phi=0$.
\end{proof}
Second, we study weighted locally conformally flat manifolds.
\subsection{Weighted locally conformally flat}
We see from~\cref{lcf-definition} that weighted locally conformally flat manifolds satisfy the relation $\pt_{ij;k}=\pt_{(ij;k)}.$

Note that we can write $g_u$ in \cref{quadratic-expression} as
$$
(g_u)_{i j}=g_{i l} (U_\phi)_{k}^{l} (U_\phi)_{j}^{k}=(U_\phi)_{i k} (U_\phi)_{j}^{k},
$$ where

$$
(U_\phi)_{j}^{i}:=\delta_{j}^{i}+ u(P_\phi)_{j}^{i}.
$$
Set $V_\phi:=(U_\phi)^{-1}$. Then $(V_\phi)_{k}^{i} (U_\phi)_{j}^{k}=\delta_{j}^{i}$. Note that $(U_\phi)_{i j}$ and $(V_\phi)_{i j}$ are both symmetric. Additionally,
\begin{equation}
\begin{aligned}
\label{u-vrelations}
(V_\phi)_{i}^{k} (g_u)_{k j}&=(U_\phi)_{i j},\\
(g_u)_{i j}^{\prime}&=2 (P_\phi)_{i k} (U_\phi)_{j}^{k},\\
u(V_\phi)_i^k(P_\phi)_k^j&=\delta_i^j-(V_\phi)_i^j.
\end{aligned}
\end{equation}

We now relate the Levi-Civita connections ${ }^{g_u} \nabla$ and ${ }^{g} \nabla$, and the curvature tensors ${ }^{g_u} R_{i j k l}$ and ${ }^{g} R_{i j k l}$.
\begin{lemma}
Let $(g_u,\phi_u)$ and $(g,\phi)$ be defined as in \cref{examples-theorem}. The Levi-Civita connections of $g_u$ and $g$ are related by
\begin{equation}
\label{connection-conformally-flat}
{ }^{g_u} \nabla_{i} \eta_{j}={ }^{g} \nabla_{i} \eta_{j}-u (V_\phi)_{l}^{k} (P_\phi)_{i; j}^{l} \eta_{k},
\end{equation} and the curvature tensors by
\begin{equation*}
\label{curvature-conformally-flat}
{ }^{g_u} R_{i j k l}={ }^{g} R_{a b k l} (U_\phi)_{i}^{a} (U_\phi)_{j}^{b}.
\end{equation*}
\end{lemma}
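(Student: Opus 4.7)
The plan is to establish both identities by a direct computation exploiting the factorization $(g_u)_{ij} = (U_\phi)_{ik}(U_\phi)^k_j$ from \cref{u-vrelations} together with the total symmetry $(P_\phi)_{ij;k} = (P_\phi)_{(ij;k)}$ noted just before the lemma (valid in the weighted LCF case).

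First I would compute the Christoffel difference $\Delta\Gamma^l_{ij} := {}^{g_u}\Gamma^l_{ij} - {}^g\Gamma^l_{ij}$ via Koszul. Since ${}^g\nabla_i(U_\phi)^a_k = u(P_\phi)^a_{k;i}$, expanding ${}^g\nabla_i(g_u)_{jk}$ by Leibniz and then forming the Koszul combination splits the result into a $u$-linear and a $u^2$-bilinear part. Total symmetry of $(P_\phi)_{\,\cdot\,\cdot\,;\,\cdot}$ collapses the $u$-linear part to $2u(P_\phi)_{ij;k}$; a brief index shuffle shows that the mixed $u^2$ cross-terms cancel in pairs, leaving $2u^2(P_\phi)_{ij;a}(P_\phi)^a_k$. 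These combine into $2u(P_\phi)_{ij;a}(U_\phi)^a_k$. Contracting against $\tfrac{1}{2}(g_u)^{lk}$ and invoking the identity $(g_u)^{lk}(U_\phi)_{kb} = (V_\phi)^l_b$, which is immediate from $V_\phi\, U_\phi = I$, yields $\Delta\Gamma^l_{ij} = u(V_\phi)^l_m(P_\phi)^m_{i;j}$ after one final use of total symmetry to raise the correct index. The connection formula is then the standard identity ${}^{g_u}\nabla_i\eta_j = {}^g\nabla_i\eta_j - \Delta\Gamma^k_{ij}\eta_k$.

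For the curvature, I would substitute $\Delta\Gamma^l_{ij} = u(V_\phi)^l_m(P_\phi)^m_{i;j}$ into the standard transformation law
\[
{}^{g_u}R^l{}_{kij} - {}^g R^l{}_{kij} = {}^g\nabla_i\Delta\Gamma^l_{jk} - {}^g\nabla_j\Delta\Gamma^l_{ik} + \Delta\Gamma^l_{ia}\Delta\Gamma^a_{jk} - \Delta\Gamma^l_{ja}\Delta\Gamma^a_{ik},
\]
using ${}^g\nabla_i(V_\phi)^l_m = -u(V_\phi)^l_a(V_\phi)^b_m(P_\phi)^a_{b;i}$, obtained by differentiating $V_\phi\,U_\phi = I$. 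Every right-hand term then becomes a polynomial in $V_\phi$, $P_\phi$, and covariant derivatives of $P_\phi$. The quadratic $\Delta\Gamma\cdot\Delta\Gamma$ terms match the $V_\phi$-derivative contributions arising from the single-derivative pieces, while the surviving second-derivative contributions collapse via the Ricci identity into a contraction of ${}^gR$ against $P_\phi$. The main obstacle is simply the index bookkeeping through these symmetrizations and cancellations; once tracked, the result is precisely ${}^gR_{abkl}(U_\phi)^a_i(U_\phi)^b_j$.

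As a slicker alternative in the LCF setting: since $g$ is flat, in flat coordinates the total symmetry of $(P_\phi)_{ij;k}$ becomes $\partial_k(P_\phi)_{ij} = \partial_i(P_\phi)_{jk}$, so by the Poincaré lemma $(P_\phi)_{ij} = \partial_i\partial_j h$ for some potential $h$. The diffeomorphism $\Phi: x \mapsto x + u\nabla h(x)$ has differential exactly $U_\phi$, hence $g_u = \Phi^*g$, and so $g_u$ is flat. In this case both sides of the curvature identity vanish trivially, giving a short conceptual proof of the second assertion once the first has been established.
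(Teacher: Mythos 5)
Your main computation follows the same route as the paper: the paper's proof also reduces to showing that the candidate connection is torsion-free (which is where $dP_\phi=0$ enters) and compatible with $g_u$ using the factorization $g_u = U_\phi\cdot U_\phi$ and the identity $(g_u)^{lk}(U_\phi)_{kb}=(V_\phi)^l_b$; your Koszul computation is just the mirror-image of this (solving for $\Delta\Gamma$ directly rather than positing it and verifying), and the cancellation of the $u^2$ cross-terms you describe is exactly what makes the paper's verification close up. For the curvature, both you and the paper substitute the connection difference into the standard two-connection curvature formula~(\cite{FeffermanGraham2012}*{(7.6)}), differentiate $V_\phi U_\phi = I$, and invoke the Ricci identity on $(P_\phi)_{aj;[lk]}$ to produce the $U_\phi\otimes U_\phi$-contracted curvature. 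So the core argument matches. Your ``slicker alternative'' for the curvature assertion, however, is a genuinely different observation that the paper does not make: since $g$ is flat and $(P_\phi)_{ij;k}$ is totally symmetric, $P_\phi = \nabla^2 h$ locally (with $h=\phi-\tfrac{\lambda}{2}|x|^2$ in flat coordinates), so $\Phi_u := \mathrm{id} + u\,\nabla h$ has Jacobian $U_\phi$ and hence $g_u = \Phi_u^* g$ is flat, killing both sides of the curvature identity outright. This shortcut buys you an immediate, computation-free proof of flatness of $g_u$ (and hence of $\widetilde{R}_{ijkl}=0$ in the subsequent lemma) in the weighted LCF case, at the cost of being specific to that case, whereas the tensorial computation in the paper (and in your main approach) establishes the transformation law ${ }^{g_u} R_{i j k l}={ }^{g} R_{a b k l} (U_\phi)_{i}^{a} (U_\phi)_{j}^{b}$ under the weaker hypothesis $dP_\phi=0$ alone. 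Both parts of your proposal are sound; just note that the alternative only covers the curvature half and presupposes the first half.
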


\begin{proof}
First note that, since $dP_\phi=0$ for a weighted locally conformally flat manifold, the right hand side of \cref{connection-conformally-flat} defines a torsion-free connection.

We now show that $g_u$ is parallel with respect to the connection determined by the right side of \cref{connection-conformally-flat}. On differentiating the metric, we get 
\begin{equation*}
\label{metric-differential}
^{g}\nabla_{k} (g_u)_{i j}=2 u (P_\phi)_{i j; k}+2 u^{2} (P_\phi)_{(i}^{l} (P_\phi)_{j) l; k}.
\end{equation*}
Using the definitions of $U_\phi$ and  $V_\phi$ and the symmetry of $(P_\phi)_{ij;k}$, we get
\begin{align*}
(V_\phi)_{l}^{\infty} (P_\phi)_{k;(i}^{l} (g_u)_{j) m}=(P_\phi)_{k;(i}^{l} (U_\phi)_{j) l}=(P_\phi)_{i j; k}+u (P_\phi)_{k;(i}^{l} (P_\phi)_{j) l}.
\end{align*}
Therefore
$${}^g\nabla_{k} (g_u)_{i j}-2u (V_\phi)_{l}^{\infty} (P_\phi)_{k;(i}^{l} (g_u)_{j) m}=0.$$ 

Combining the previous two paragraphs verifies \cref{connection-conformally-flat}. 

Now set
$$(D_\phi)_{j k}^{i}:=u (V_\phi)_{l}^{i} (P_\phi)_{j; k}^{l},$$ so that $(D_\phi)_{j k}^{i}$ is the difference of ${}^{g}\nabla$ and ${}^{g_u}\nabla$. The difference of the curvature tensors of the connections is given in terms of $D_\phi$ by
$$
{ }^{g_u} R_{m j k l} (g_u)^{i m}-{ }^{g} R_{j k l}^{i}=2 (D_\phi)_{j[l; k]}^{i}+2 (D_\phi)_{j[l}^{c} (D_\phi)_{k] c}^{i};
$$ see \cite[(7.6)]{FeffermanGraham2012}.
We compute that
$$
\begin{aligned}
(D_\phi)_{j l; k}^{i} &=u\left((V_\phi)_{a; k}^{i} (P_\phi)_{j; l}^{a}+(V_\phi)_{a}^{i} (P_\phi)_{j; l k}^{a}\right) \\
&=-u (V_\phi)_{b}^{i} (U_\phi)_{c; k}^{b} (V_\phi)_{a}^{c} (P_\phi)_{j; l}^{a}+u (V_\phi)_{a}^{i} (P_\phi)_{j; l k}^{a} \\
&=-u^{2} (V_\phi)_{b}^{i} (P_\phi)_{c; k}^{b} (V_\phi)_{a}^{c} (P_\phi)_{j; l}^{a}+u (V_\phi)_{a}^{i} (P_\phi)_{j; l k}^{a} \\
&=-(D_\phi)_{c k}^{i} (D_\phi)_{j l}^{c}+u (V_\phi)_{a}^{i} (P_\phi)_{j; l k}^{a}.
\end{aligned}
$$
Therefore, 
\begin{align*}
{ }^{g_u} R_{m j k l} (g_u)^{i m} &={ }^{g} R_{j k l}^{i}+2 u (V_\phi)^{i a} (P_\phi)_{a j;[l k]} \\
&={ }^{g} R_{j k l}^{i}+u (V_\phi)^{i a}\left({ }^{g} R_{a l k}^{b} (P_\phi)_{b j}+{ }^{g} R_{j l k}^{b} (P_\phi)_{a b}\right).
\end{align*}
Since $(V_\phi)_{i}^{k} (g_u)_{k j}=(U_\phi)_{i j}$, we conclude that
\begin{align*}
{ }^{g_u} R_{i j k l} &={ }^{g} R_{j k l}^{b} (g_u)_{b i}+u (U_\phi)_{i}^{a}\left({ }^{g} R_{a l k}^{b} (P_\phi)_{b j}+{ }^{g} R_{j l k}^{b} (P_\phi)_{a b}\right) \\
&={ }^{g} R_{j k l}^{b} (U_\phi)_{b a} (U_\phi)_{i}^{a}+u (U_\phi)^{a}_{i}\left({ }^{g} R_{a l k}^{b} (P_\phi)_{b j}+{ }^{g} R_{j l k}^{b} (P_\phi)_{a b}\right) \\
&=\left[{ }^{g} R_{a b k l}\left(\delta_{j}^{b}+u (P_\phi)_{j}^{b}\right)\right] (U_\phi)_{i}^{a} \\
&={ }^{g} R_{a b k l} (U_\phi)_{j}^{b} (U_\phi)^{a}{ }_{i}.\qedhere
\end{align*}
\end{proof}

\begin{lemma}
The metric \ref{quadratic-expression} is flat for weighted locally conformally flat manifolds with density. 
\end{lemma}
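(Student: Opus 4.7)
The plan is to simply invoke the curvature formula established in the previous lemma. Recall that for the metric $(g_u)_{ij} = g_{ij} + 2u(P_\phi)_{ij} + u^2(P_\phi)_{ik}(P_\phi)_j^k = (U_\phi)_{ik}(U_\phi)_j^k$ arising from \eqref{quadratic-expression}, we have the identity
\begin{equation*}
{}^{g_u} R_{ijkl} = {}^{g} R_{abkl}\,(U_\phi)_i^a\,(U_\phi)_j^b.
\end{equation*}

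By \cref{lcf-definition}, a weighted locally conformally flat manifold with density is one for which $g$ is flat, i.e.\ ${}^g R_{abkl} = 0$ identically. Substituting this into the displayed identity yields ${}^{g_u} R_{ijkl} = 0$, so $g_u$ is flat. This completes the proof.

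The only potentially nontrivial point is that the curvature formula from the previous lemma was derived under the assumption $dP_\phi = 0$ used to ensure the connection defined there is torsion-free; but this is guaranteed in the weighted locally conformally flat setting precisely because $g$ being flat forces the symmetry ${P_\phi}_{ij;k} = {P_\phi}_{(ij;k)}$, as noted in the paragraph preceding the computation of $(U_\phi)$ and $(V_\phi)$. So no obstacle arises, and the lemma follows as a direct corollary.
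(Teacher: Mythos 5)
The lemma is asserting flatness of the \emph{ambient} metric $\widetilde g = 2\,du\,dv + g_u$ on $\widetilde{\mathcal G}$ (with $g_u$ as in \eqref{quadratic-expression}), not merely of the $n$-dimensional slice metric $g_u$ on $M$. This is clear from the paper's own proof, which computes all components of $\widetilde R_{IJKL}$ using \eqref{christoffel symbols for the undetermined metric}, and from the subsequent use of the lemma in the proof of \cref{Theorem 1.2-2}, where $\widetilde{\mathrm{Ric}}=0$ is needed to conclude $\widetilde{P}_\phi=0$.

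Your argument only establishes $\widetilde R_{ijkl}=0$: the curvature identity ${}^{g_u}R_{ijkl}={}^{g}R_{abkl}(U_\phi)_i^a(U_\phi)_j^b$ shows $g_u$ is flat when $g$ is, and since $\widetilde R_{ijkl}={}^{g_u}R_{ijkl}$ by \eqref{ijkl}, these tangential components vanish. That is one of the four families of components the paper handles. You have not addressed the remaining components: $\widetilde R_{IJK0}$ (which vanish because $\widetilde g$ is $v$-independent in normal form), $\widetilde R_{ijk\infty}=\tfrac12[\nabla_i g'_{jk}-\nabla_j g'_{ik}]$ (which requires the Codazzi symmetry $(P_\phi)_{ij;k}=(P_\phi)_{(ij;k)}$ available when $g$ is flat, applied to $g'_{jk}=2(P_\phi)_{jk}+2u(P_\phi)_{jm}(P_\phi)^m_k$), and $\widetilde R_{\infty ij\infty}=\tfrac12[g''_{ij}-\tfrac12 g^{kl}g'_{lj}g'_{ki}]$ (which is an algebraic identity following from $g''_{ij}=2(P_\phi)_{ik}(P_\phi)^k_j$ and $g_u^{kl}=(V_\phi)^{ks}(V_\phi)_s{}^l$ via \eqref{u-vrelations}). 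Until these are verified, you have shown only that $g_u$ is flat, which is necessary but not sufficient for the claim as used in the paper.
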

\begin{proof}
Using \cref{christoffel symbols for the undetermined metric}, we calculate the curvature tensor for $(\widetilde{g},\widetilde{\phi})=(2dudv+g_u,\phi-v+\lambda uv)$.
\begin{subequations}
\label{curvatureequations}
\begin{align}
\begin{split}
\nonumber
\widetilde{R}_{IJK0}&=0,
\end{split}\\
\begin{split}
\label{ijkl}
\widetilde{R}_{ijkl}&=R_{ijkl},
\end{split}\\
\begin{split}
\label{ijkinfty}
\widetilde{R}_{ijk\infty}&=\frac{1}{2}[\nabla_i g^\prime_{jk}-\nabla_j g^\prime_{ik}],
\end{split}\\
\begin{split}
\label{rinftyinfty}
\widetilde{R}_{\infty ij\infty}&=\frac{1}{2}[g^{\prime\prime}_{ij}-\frac{1}{2}g^{kl}g^\prime_{lj}g^\prime_{ki}].
\end{split}
\end{align}
\end{subequations}
It is clear from \cref{ijkl} that $\widetilde{R}_{ijkl}=0$.

As $(M^n,g)$ is flat, it is clear from \cref{quadratic-expression,ijkinfty} that $\widetilde{R}_{ijk\infty}=0$.

Differentiating \cref{quadratic-expression} gives us $g^{\prime\prime}_{ij}=2\pt_{ik}\pt^k{}_j$. Also, $g^{ij}=\vt^{il}\vt_l{}^j$. Using \cref{u-vrelations,rinftyinfty}, we see that $\widetilde{R}_{\infty i j \infty}=0$.

Hence, we see that \cref{quadratic-expression} is flat. 
\end{proof}

\begin{proof}[Proof of \cref{Theorem 1.2-2} of \cref{examples-theorem}]

Direct computation using \cref{christoffel symbols for the undetermined metric} immediately yields 
$$
(\widetilde{\nabla}^2 \widetilde{\phi}-\lambda \widetilde{g})_{0 I}=(\widetilde{\nabla}^2 \widetilde{\phi}-\lambda \widetilde{g})_{\infty\infty}=0.
$$

We next prove that $(\widetilde{\nabla}^2\widetilde{\phi})_{i\infty}=0$. Observe that by \cref{basic-definitions-2}, \begin{equation}\label{phiyphim}-\nabla_i (Y_{\phi})=(P_\phi)_i^k \nabla_k \phi,\end{equation} and by \cref{u-vrelations}, \[\frac{1}{2}g^{kl}g^\prime_{li}=(V_\phi)^k_s (P_\phi)^s_i.\] We then compute that 
\begin{align*}
(\widetilde{\nabla}^2 \widetilde{\phi})_{i\infty}&=-\partial_i \left(Y_{\phi}\right)-\widetilde{\Gamma}_{i\infty}^k \partial_k\left(\phi-u Y_{\phi}\right)\\
&=(P_\phi)^k_i \partial_k \phi-(V_\phi)^k_s(P_\phi)^s_i\partial_k \phi-u(V_\phi)^k_s (P_\phi)^s_i (P_\phi)^l_k \partial_l \phi.
\end{align*}
The conclusion follows from the identity $-u (P_\phi)^l_k=\delta^l_k-(U_\phi)^l_k.$

We now compute $(\widetilde{\nabla}^2\widetilde{\phi}-\lambda \widetilde{g})_{ij}$. 

Note that 
\begin{align*}
(\widetilde{\nabla}^2\widetilde{\phi})_{ij}&={}^{g_u}\nabla^2_{ij}\phi_u+\frac{1}{2}(\lambda u-1)g^\prime_{ij}\\
&={}^{g_u}\nabla^2_{ij}(\phi-uY_{\phi})+(\lambda u-1)(P_\phi)_{ik}(U_\phi)^k_j.
\end{align*}

Also, from \cref{phiyphim}, we have 
\begin{align}
\label{nabla2yphim}
{}^g\nabla^2_{ij}Y_{\phi}=-(P_\phi)_{ij;}{}^k(\nabla_k\phi)-(P_\phi)_{ik}(\nabla^2\phi)^k_j,    
\end{align}
where $\nabla^2\phi=P_\phi+\lambda g$.

Moreover, from \cref{connection-conformally-flat}, we have 
\begin{equation}
\label{nabla2gu}
{}^{g_u}\nabla_{ij}^2(\phi-uY_{\phi})={}^g\nabla^2_{ij}(\phi-uY_{\phi})
-u(V_\phi)_l^k(P_\phi)_{i,j}^l\nabla_k(\phi-uY_{\phi}).    
\end{equation}
Putting together \cref{nabla2yphim,nabla2gu,u-vrelations}, we get $(\widetilde{\nabla}^2\widetilde{\phi}-\lambda g)_{ij}=0$.

We now prove that $\widetilde{F}_\phi=0$. Since $(\widetilde{\nabla}^2\widetilde{\phi}-\lambda g)_{IJ}=0$, we compute that $\partial_u\widetilde{F}_\phi=0$. Hence, it suffices to check that $\restr{\widetilde{F}_\phi}{u=0}=0$, which is done by using \cref{g'phi'} and direct computation.
\end{proof}
\subsection{Einstein manifolds}
\begin{proof}[Proof of \cref{second-examples-theorem}]
For Einstein manifolds of the form of \cref{einstein-manifold-equation}, \cref{ijcomponent,scomponent} can be written as 
\begin{equation}
\label{ambient-equations-for-solitons}
\begin{aligned}
2\mu g_{ij}-\frac{1}{2}g'_{ij} =0\bmod O(u^\infty),\\
 -\frac{1}{2}g^{ij}g'_{ij}+2\phi'=0\bmod O(u^\infty).
\end{aligned}    
\end{equation}
We see from direct computation, and \cref{normal-ambient-metric-exists}, that
\begin{equation}
\label{einstein-ambient-metric}
g_u=(1+4\mu u)g,\quad \phi_u=\frac{n}{4}\mathrm{ln}|1+4\mu u|+c  \end{equation} is the unique solution to \cref{ambient-equations-for-solitons} with the boundary conditions $(g_0,\phi_0)=(g,\phi)$. 
Also, we deduce from \cref{einstein-ambient-metric} that
\begin{equation*}
   v_{k,\phi}=\frac{1}{k!}\mu^k \prod_{j=0}^{k-1}(n-4j).\qedhere
\end{equation*}\
\end{proof}

\section{Weighted renormalized volume coefficients of manifolds with density}
\label{sec:renormalized-volume-coefficients}
In this section, we study weighted renormalized volume coefficients, and prove \cref{linear-expansion-theorem,first-derivative-fphim,second-derivative-theorem,infinitesimal-conformal-change-theorem}.

We define 
$$ \mathcal{W}_{k,\phi} :=\int_M  \tau^k v_{k,\phi} \, (4\pi\tau)^{-n/2}e^{-\phi}\mathrm{dvol}_g.$$
We now write down the renormalized volume coefficients of gradient Ricci solitons~(cf.\ \cite[Lemma 2.3]{Case2016v}).
\begin{lemma}
For gradient Ricci solitons, we have
\begin{equation}
\label{soliton-power-formula}    
v_{k,\phi}=\frac{1}{k!}(v_{1,\phi})^k.
\end{equation}
\end{lemma}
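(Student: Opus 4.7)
The plan is to invoke \cref{examples-theorem}\cref{Theorem 1.2-1}, which identifies the normal ambient metric of a gradient Ricci soliton explicitly, and then observe that the renormalized volume $v_\phi$ reduces to a pure exponential in $u$. Once that is established, the power formula follows from a one-line differentiation.

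First I would use the explicit description from \cref{examples-theorem}\cref{Theorem 1.2-1} together with \cref{gradient-soliton-g'phi'} to conclude that, for a gradient Ricci soliton in normal form,
\begin{equation*}
g_u = g, \qquad \phi_u = \phi + \alpha u,
\end{equation*}
where $\alpha := \phi'|_{u=0}$ is a constant on $M$ (the fact that $g_u$ is independent of $u$ is immediate from $P_\phi = 0$ via \cref{quadratic-expression}, and the linearity of $\phi_u$ follows because $\phi' = -Y_\phi$ is constant along $M$ by \cref{gradient-soliton-g'phi'} and then remains constant in $u$ by the uniqueness in \cref{normal-ambient-metric-exists} applied to the ansatz $\phi_u = \phi + \alpha u$).

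Next, I would plug this into the definition of the renormalized volume to obtain
\begin{equation*}
v_\phi\bigr|_{v=0} = \frac{e^{-\widetilde{\phi}}}{e^{-\phi}}\left(\frac{\det g_u}{\det g}\right)^{1/2}\bigg|_{v=0} = e^{\phi-\phi_u} = e^{-\alpha u}.
\end{equation*}
Differentiating $k$ times in $u$ at $u=0$ then yields
\begin{equation*}
v_{k,\phi} = \frac{1}{k!}\,\partial_u^k\bigr|_{u,v=0} e^{-\alpha u} = \frac{(-\alpha)^k}{k!}.
\end{equation*}
In particular $v_{1,\phi} = -\alpha$, so $v_{k,\phi} = \frac{1}{k!}(v_{1,\phi})^k$, as claimed.

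There is no real obstacle here beyond confirming that the normal form reduction is valid and that $\phi_u$ stays linear to all orders in $u$; the latter is the only point that requires a (brief) appeal to uniqueness, since \emph{a priori} one only has $\phi'|_{u=0}$ from \cref{g'phi'}. The higher-order vanishing of $\partial_u^k \phi_u$ for $k \geq 2$ is guaranteed by \cref{examples-theorem}\cref{Theorem 1.2-1}, which certifies that the linear extension $(g, \phi + \alpha u)$ already satisfies the ambient equations modulo $O(u^\infty)$.
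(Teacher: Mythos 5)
Your proof is correct and follows essentially the same route as the paper: both use \cref{examples-theorem}\cref{Theorem 1.2-1} (together with \cref{gradient-soliton-g'phi'}) to read off $g_u = g$ and $\phi_u = \phi - uY_\phi$ with $Y_\phi$ constant, observe that $v_\phi = e^{Y_\phi u}$, and then differentiate. The only stylistic difference is that you spend a sentence justifying the linearity of $\phi_u$ via uniqueness, which is redundant since \cref{examples-theorem}\cref{Theorem 1.2-1} already asserts the closed-form expression $\phi_u = \phi - uY_\phi$ outright.
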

\begin{proof}
We deduce from \cref{examples-theorem} that for gradient Ricci solitons, \[g_u=g,\quad \phi_u=\phi-u Y_\phi,\] where $Y_\phi$ is constant. Hence, $v_{1,\phi}=\frac{1}{2}g^{ij}g'_{ij}-\phi'=Y_\phi$. \cref{volcoefficientsdef} implies \cref{soliton-power-formula}.
\end{proof}
We know from \cref{gradient-soliton-g'phi'} that $v_{1,\phi}=-\frac{1}{2}(n\lambda-c)$.

We now define the tensor $(\Omega_\phi)^{(k)}_{ij}$, which is analogous to the weighted version of the extended obstruction tensor~(cf. \cites{CaseKhaitan2022}).
\begin{definition}
\label{definition-extended-obstruction-tensors}
Let $k\geq 1$. The tensor $(\Omega_\phi)^{(k)}_{ij}$ is defined as \begin{equation*}(\Omega_\phi)^{(k)}_{ij}:= \restr{\widetilde{R}_{\infty ij\infty,\underbrace{\scriptstyle \infty\dotsm\infty}_{k-1}}}{u=0}. \end{equation*}
\end{definition}
\begin{proof}[Proof of \cref{linear-expansion-theorem}]
We perform our calculations at an arbitrary $u$. Let $(\Lambda_u^{(k)})_{i j}=\widetilde{R}_{\infty i j \infty,\underbrace{\scriptstyle\infty \ldots \infty}_{k-1}}$. Hence, $\restr{\Lambda^{(k)}_u}{{u=0}}=(\Omega_\phi^m)^{(k)}$.
The $k=1$ case is trivially true. Let us assume that these formulas hold true for $(\partial^{k-1} g_{ij},\partial^{k-1} \phi)$, and consider $(\partial^{k} g_{ij},\partial^{k} \phi)$. For brevity, we refer to $\Lambda_u,\phi_u$ and $g_u$ as $\Lambda,\phi$ and $g$ respectively in this proof.

For a normal weighted ambient space, we have
\begin{subequations}
\begin{align}
\begin{split}
\label{lambda-derivative}
\partial_{u} \Lambda_{i j}^{(k)}&=\Lambda_{i j}^{(k+1)}+g^{l m} g_{m(i}^{\prime} \Lambda_{j) l}^{(k)},
\end{split}\\
\begin{split}
\label{g-double-prime}
g^{\prime\prime}_{ij}&=2\Lambda_{ij}^{(1)}+\frac{1}{2}g^{kl}g^\prime_{ik}g^\prime_{jl},
\end{split}\\
\begin{split}
\label{g-inverse}
(g^{ij})^\prime&=-g^{ik}g^\prime_{kl}g^{lj},
\end{split}\\
\begin{split}
\label{phi-double-prime}
\phi^{\prime\prime}&=g^{ij}\Lambda_{ij}^{(1)}.
\end{split}
\end{align}
\end{subequations}

\cref{lambda-derivative} is derived using \cref{christoffel symbols for the undetermined metric}. 

\cref{g-double-prime} is derived using \cref{rinftyinfty}.

\cref{phi-double-prime} is derived using the fact that $(\widetilde{P}_\phi)_{\infty\infty}=O(u^\infty)$, \cref{christoffel symbols for the undetermined metric} and \cref{rinftyinfty}.

From \cref{lambda-derivative,g-double-prime,phi-double-prime,g-inverse}, we get
\begin{equation*}
\begin{aligned}
\partial^{k} g_{ij}&=2\Lambda_{ij}^{(k-1)}+\mathcal{G}^{(k)}(g^\prime,\Lambda^{(1)}_{ij},\dotsm,\Lambda^{(k-2)}_{ij}),\\
\partial^{k} \phi&=g^{ij}\Lambda_{ij}^{(k-1)}+\mathcal{F}^{(k)}(\phi^\prime,g^\prime,\Lambda^{(1)}_{ij},\dotsm,\Lambda^{(k-2)}_{ij}),\\
\frac{1}{2}g^{ij}\partial^{k} g_{ij}-\partial^{k} \phi&=\mathcal{L}^{(k)}(\phi^\prime,g^\prime,\Lambda^{(1)}_{ij},\dotsm,\Lambda^{(k-2)}_{ij}).
\end{aligned}
\end{equation*}
Thus, by induction, we complete the proof of \cref{linear-formulas,v-equation}.

Now, upon studying \cref{volcoefficientsdef}, we observe that each $ v_{k,\phi} $ can be written as a complete contraction of $\partial_u^l g_{ij}$ and $\partial_u^l \phi$ for $1\leq l\leq k-1$ and $\frac{1}{2} g^{i j} \partial^{k} g_{i j}-\partial^{k} \phi$. Hence, for $k\geq 1$, we have $$
v_{k,\phi}(g)=\mathcal{V}^{(k)}\left(Y_{\phi}, P_\phi, (\Lambda_\phi)^{(1)}, \ldots, (\Lambda_\phi)^{(k-2)}\right)
$$ for some linear combination of contractions $\mathcal{V}$.
Restricting to $u=0$ and recalling that $\restr{\Lambda^{(k)}_{ij}}{{u=0}}=(\Omega_\phi)_{ij}^{(k)}$ completes the proof of \cref{linear-expansion-theorem}.
\end{proof}
Using \cref{linear-expansion-theorem}, we show that the conformal transformation formulas of
$\partial^k_{u}g_{ij},\partial^k_{u}\phi$ and $ v_{k,\phi} $ involve at most the second derivatives of $\omega$.
\begin{lemma}
Let $k\geq 1$.
Under conformal change, the conformal transformation laws of $\partial_{u}^{k} g$ and  $\partial_{u}^{k} \phi$ involve at most the second derivatives of $\omega .$
\end{lemma}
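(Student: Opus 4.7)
The plan is to apply \cref{linear-expansion-theorem}, which expresses $\partial_u^k g_{ij}$ and $\partial_u^k \phi$ at $u=0$ as universal linear combinations of partial contractions of tensor products of the weighted invariants $P_\phi$, $Y_\phi$, and the extended obstruction tensors $(\Omega_\phi)^{(1)},\dots,(\Omega_\phi)^{(k-1)}$, with contractions taken with respect to $(g_u,\phi_u)$. By \cref{conformaltransformationformulas}, the transformations $\widehat{P_\phi}=P_\phi+\nabla^2\omega$ and $\widehat{Y_\phi}-Y_\phi=g(\nabla\phi,\nabla\omega)-\tfrac{1}{2}|\nabla\omega|^2+\lambda\omega$ involve the second and first derivatives of $\omega$, respectively. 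Once we know that each $(\Omega_\phi)^{(j)}$ transforms under $\phi\mapsto\phi+\omega$ with at most the second derivatives of $\omega$, the desired conclusion follows by substituting into the polynomials $\mathcal{Q}$ and $\mathcal{S}$ from \cref{linear-formulas}; the contractions appearing there only introduce powers of $g_u^{ij}$, which inductively inherit the same bound via \cref{g-inverse}.

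The control on the extended obstruction tensors would be obtained by induction on $j$. For $j=1$, the formulas \cref{g''} and \cref{rinftyinfty} yield
\[
(\Omega_\phi)^{(1)}_{ij}=\tfrac{1}{2}\bigl(g''_{ij}-\tfrac{1}{2}g^{kl}g'_{il}g'_{jk}\bigr)\big|_{u=0}=-B_\phi+\text{polynomial in }P_\phi,
\]
and the conformal variation of the weighted Bach tensor from \cref{weighted bach} can be computed directly: while $\delta_{\widehat{\phi}}\,d\,\nabla^2\omega$ naively contains third derivatives of $\omega$, they are absorbed by the commutator identity $[\nabla_i,\nabla_j]\nabla_k\omega=R_{ijk}{}^l\nabla_l\omega$ together with the conformal variation of the weighted divergence $\delta_{\widehat{\phi}}=\delta_\phi-\iota_{\nabla\omega}$, leaving only contributions in $\omega$, $\nabla\omega$, and $\nabla^2\omega$. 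For the inductive step, the recursion \cref{lambda-derivative} for $\Lambda^{(k)}$ combined with the inductive bound on $(\Omega_\phi)^{(j')}$ for $j'\le j$ (and on $g_u$, $\phi_u$) propagates the bound to $(\Omega_\phi)^{(j+1)}$: the only spatial derivatives of $\omega$ that can appear come from differentiating $g_u$, $\phi_u$, or lower-index obstruction tensors, all of which by hypothesis involve $\omega$ only up to second order.

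The hard part will be the cancellation at each inductive step. Naively, the weighted Bach tensor $B_\phi=\delta_\phi dP_\phi+\mathrm{Rm}\cdot P_\phi$ already contains a covariant derivative of $P_\phi$, which under conformal change threatens to produce a third derivative of $\omega$; an analogous threat reappears whenever one passes from $(\Omega_\phi)^{(j)}$ to $(\Omega_\phi)^{(j+1)}$ through \cref{lambda-derivative}. The key mechanism, exactly as in the iterative construction in the proof of \cref{normal-ambient-metric-exists}, is the weighted Bianchi identity \cref{weighted bianchi}, which forces the apparent third-order terms in $\omega$ to regroup into contributions expressible via the next-higher extended obstruction tensor. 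This keeps the conformal transformation closed on the space of expressions involving $\omega$ only up to second derivatives, finishing the proof.
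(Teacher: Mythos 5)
Your overall framework—invoking \cref{linear-expansion-theorem} and the transformation laws \cref{conformaltransformationformulas} for $P_\phi$ and $Y_\phi$, then reducing the problem to control of the extended obstruction tensors $(\Omega_\phi)^{(j)}$—matches the paper's first step. But you then diverge onto a much harder road, and the divergence opens a genuine gap.

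The paper's proof of the reduction to $(\Omega_\phi)^{(j)}$ is a one-liner: under the weighted conformal change $\phi\mapsto\phi+\omega$ the underlying Riemannian metric $g$ is untouched (this is exactly what \cref{conformalchangedefinition} records), and the normal-form ambient structures associated with $(g,\phi)$ and $(g,\phi+\omega)$ are ambient-equivalent, so the ambient curvature tensor and all of its iterated ambient covariant derivatives are strict conformal invariants: $\widehat{\widetilde{R}}_{ABCD,M_1\dotsm M_r}=\widetilde{R}_{ABCD,M_1\dotsm M_r}$. In particular each $(\Omega_\phi)^{(j)}$ is literally unchanged by the conformal rescaling—no $\omega$-derivatives appear at all—and the lemma follows immediately from \cref{linear-expansion-theorem} together with the (first- and second-order) transformation laws of $Y_\phi$ and $P_\phi$.

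You instead attempt to show that each $(\Omega_\phi)^{(j)}$ transforms with at most second derivatives of $\omega$ by an induction running through the conformal variation of $B_\phi$ and the recursion \cref{lambda-derivative}. This is where the gap lies. You yourself flag the issue: ``the hard part will be the cancellation at each inductive step.'' Concretely, $\delta_\phi d(\nabla^2\omega)$ in the variation of $B_\phi$ naively carries fourth-order terms in $\omega$ (not merely third), and the claim that these regroup via commutator identities and the weighted Bianchi identity into expressions in $\omega,\nabla\omega,\nabla^2\omega$ is asserted, not verified; the analogous claim at each higher step of the induction is likewise left unproven. Even if all these cancellations could be carried out, you would only obtain a second-derivative bound on the variation of $(\Omega_\phi)^{(j)}$, whereas the true statement—and the one that makes the proof instantaneous—is that these variations vanish identically because the ambient curvature is conformally invariant. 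Replacing the inductive computation with the observation of ambient-equivalence (and hence conformal invariance of $\widetilde{R}$ and its derivatives) closes the gap and recovers the paper's argument.
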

\begin{proof}
Using \cref{conformalchangedefinition}, we observe that under conformal change,
\begin{equation}
\label{rconformaltransformationformula}
\widehat{\widetilde{R}}_{ABCD,M_1\dotsm M_r}=\widetilde{R}_{ABCD,M_1\dotsm M_r}.    
\end{equation}
Hence, we complete the proof using \cref{conformaltransformationformulas,rconformaltransformationformula,linear-expansion-theorem}.
\end{proof}


\begin{proof}[Proof of \cref{infinitesimal-conformal-change-theorem}]
Let $\man$ be a manifold with density, and let $(\widetilde{g},\widetilde{\phi})$ be a weighted ambient metric in normal form with respect to $(g,\phi)$, such that $(g_0,\phi_0)=(g,\phi)$. If we choose another metric measure structure $(\widehat{g},\widehat{\phi})=(g,\phi+\omega)$ in the same conformal class $[g,\phi]$, the weighted ambient space in normal form for it is $(2dudv + \widehat{g}_u,\widehat{\phi}_u-v+\lambda uv)$ such that $(\widehat{g}_0,\widehat{\phi}_0)=(\widehat{g},\widehat{\phi})$. There exists a diffeomorphism $\psi:\cmG\to \cmG$ such that 
\begin{align*}
\psi^*[2dudv + g_u]&=\widehat{g}_u+2dudv,\\
\psi^*[\phi_u-v+\lambda uv]&=\widehat{\phi}_u-v+\lambda uv.
\end{align*}
Let us now consider a one-parameter family of conformal transformations $(\widehat{g}(t),\widehat{\phi}(t))=(g, \phi + t\omega)$. This generates a one-parameter family of diffeomorphisms $\psi^*_t$ such that
\begin{equation}
\label{pullback-normal-form}
\begin{aligned}
\psi_t^*[2dudv + g_u]&=\widehat{g}_u+2dudv,\\
\psi_t^*[\phi_u-v+\lambda uv]&=\widehat{\phi}_u-v+\lambda uv.
\end{aligned} 
\end{equation}
This one-parameter family of diffeomorphisms also generates a vector field $X=X^0\partial_v + X^i\partial_i+X^\infty \partial_\infty$ such that the flow along $X$ corresponds to the family of diffeomorphisms. 
We write \cref{pullback-normal-form} as
\begin{equation}
\begin{aligned}
\label{lxg}
L_X(2dudv+g_u)&=\delta g_u,\\
L_X (\phi_u-v+\lambda uv)&=\delta\phi_u.
\end{aligned}
\end{equation}
On matching coefficients in \cref{lxg}, we get
\begin{equation}
\label{coeff}
\begin{aligned}
\partial_0(X^\infty)&=0,\\
\partial_0(X^0)+\partial_\infty(X^\infty)&=0,\\
\partial_\infty(X^0)&=0,\\
g_{ij}\partial_0(X^j)+\partial_i(X^\infty)&=0,\\
g_{ij}\partial_\infty(X^j)+\partial_i(X^0)&=0.
\end{aligned}    
\end{equation}
The boundary conditions are 
\begin{equation}
\label{bdryconditions}
\begin{aligned}
\restr{X^0}{u=0}&=-\omega,\\
\restr{X^i}{u=0}&=0,\\
\restr{X^\infty}{u=0}&=0.
\end{aligned}    
\end{equation}
From \cref{coeff,bdryconditions}, we get 
\begin{equation}
\label{xvalues}
X^0=-\omega,\quad X^i=\int_{0}^{u} g^{ij}(s) ds\,\partial_j\omega,\quad X^\infty=0.   
\end{equation}
From \cref{lxconditions,xvalues}, we deduce that
\begin{equation*}
\begin{aligned}
\delta g_{ij}=2\nabla_{(i}X_{j)},\quad \delta\phi=X^i\partial_i\phi_u+\omega-\lambda \omega u.
\end{aligned}    
\end{equation*}
We have
\begin{equation*}
\begin{aligned}
\frac{\delta v_{\phi}}{v_{\phi}}&=\delta \ln v_{\phi}\\
&=-\delta\phi_u+\omega+\frac{1}{2}g^{ij}\delta g_{ij}\\
&=\omega \lambda u +\nabla_i^{(u)} X^i-X^i\partial\phi_u.
\end{aligned}
\end{equation*}
Therefore, 
\begin{equation*}
\delta v_{\phi}=v_{\phi}\omega \lambda u +v_{\phi}\nabla_i^{(u)} X^i-v_{\phi}X^i\partial\phi_u.
\end{equation*}
We observe that
\begin{equation*}
v_{\phi}\nabla_i^{(u)} X^i=\nabla_i^{(0)} (v_{\phi} X^i)+\partial_i(\phi_u-\phi_0)v_{\phi} X^i.    
\end{equation*}
Hence, we have
\begin{equation}
\label{nabla*}
\delta v_{\phi}=\lambda u \omega v_{\phi}+\nabla_i^*{}^{(0)}(v_{\phi} X^i),   
\end{equation}
where $-\nabla^*$ is the adjoint of $\nabla$ with respect to the $L^2$-inner product induced by $e^{-\phi}\mathrm{dvol}_g$. 

From \cref{nabla*}, we deduce that for $k\geq 1$,
\begin{equation}
\label{variation of volumetric coefficient}
\delta v_{k,\phi}=\omega\lambda v_{k-1,\phi} +\frac{1}{ k !} \restr{\partial_{u}^{k}}{u=0}[{\nabla^*_i}^{(0)}(v_{\phi} X^i)],
\end{equation}
where $v_{0,\phi}=1$.
We may write \cref{variation of volumetric coefficient} as 
\begin{equation*}
\delta v_{k,\phi}=\omega\lambda v_{k-1,\phi} + {\nabla^*_i}^{(0)}[(L_{k,\phi})^{ij}\nabla_j \omega], \end{equation*}
where 
\begin{equation}
\label{l-definition}
(L_{k,\phi})^{ij}=\frac{1}{k!}\restr{\partial^k_u}{u=0}[v_{\phi}\int_0^u g^{ij}(u)\,du].\qedhere
\end{equation}
\end{proof}
\begin{corollary}
As ${\nabla^*_i}^{(0)}[(L_{k,\phi})^{ij}\nabla_j \omega]$ is a divergence term, we have 
\begin{equation}
\label{variational-integral}
\int_M \delta v_{k,\phi} \,(4\pi\tau)^{-n/2}e^{-\phi}\mathrm{dvol}_g=\int_M \omega\lambda v_{k-1,\phi} \,(4\pi\tau)^{-n/2}e^{-\phi}\mathrm{dvol}_g. 
\end{equation}
\end{corollary}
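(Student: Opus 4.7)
The plan is to integrate the pointwise variational identity \eqref{v-conformal-variation} from \cref{infinitesimal-conformal-change-theorem} against the weighted volume $(4\pi\tau)^{-n/2}e^{-\phi}\mathrm{dvol}_g$ over $M$. Since $\tau$ is being treated as a constant in this discussion, the factor $(4\pi\tau)^{-n/2}$ pulls outside both integrals, so the claim reduces to showing that the divergence term on the right-hand side of \eqref{v-conformal-variation} integrates to zero against $e^{-\phi}\mathrm{dvol}_g$.

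This vanishing is immediate from the definition of $\nabla^*$ recalled in \cref{infinitesimal-conformal-change-theorem}: by construction, $-\nabla^*$ is the formal adjoint of $\nabla$ with respect to the weighted $L^2$ inner product induced by $e^{-\phi}\mathrm{dvol}_g$. Applying the adjoint relation with the constant test function $1$, so that $\nabla 1 = 0$, and the vector field $Z^i := (L_{k,\phi})^{ij}\nabla_j\omega$, we obtain
\[
\int_M {\nabla^*_i}^{(0)} Z^i \, e^{-\phi}\mathrm{dvol}_g \;=\; \int_M 1 \cdot {\nabla^*_i}^{(0)} Z^i \, e^{-\phi}\mathrm{dvol}_g \;=\; -\int_M \langle \nabla 1, Z\rangle_g \, e^{-\phi}\mathrm{dvol}_g \;=\; 0.
\]
Substituting this into the integral of \eqref{v-conformal-variation} produces exactly \eqref{variational-integral}.

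There is essentially no obstacle here: the corollary is a one-line integration-by-parts consequence of \cref{infinitesimal-conformal-change-theorem}, and the only point to verify is that no boundary contribution appears. This is automatic under the standing compactness hypothesis on $M$ that is implicit in the variational setup for $\mathcal{W}_{k,\phi}$ and in the definition of $\mathcal{C}_1$; no decay or boundary conditions need to be imposed on $\omega$ or $(L_{k,\phi})^{ij}$ separately.
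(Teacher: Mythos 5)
Your proof is correct and follows the same route the paper takes: the corollary is established simply by observing that the $\nabla^*$ term is a weighted divergence and hence integrates to zero against $e^{-\phi}\mathrm{dvol}_g$ (equivalently, by pairing with the constant function $1$ via the adjoint relation). You merely spell out the integration-by-parts step and the compactness needed for it, which the paper leaves implicit.
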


\subsubsection{Critical points}
\label{sec:critical-points}
In this section, we study the critical points of $ \mathcal{W}_{k,\phi}  $. First, we show that the critical points of $ \mathcal{W}_{k,\phi}  $ in $\mathcal{C}_1$ satisfy the condition that $ v_{k,\phi} $ is constant.
\begin{proof}[Proof of \cref{first-derivative-fphim}]
\label{proof-of-first-derivative-fphim}
We have 
\begin{equation*}
\delta  \mathcal{W}_{k,\phi}  =\int_M \tau^k \delta v_{k,\phi} \,(4\pi\tau)^{-n/2}e^{-\phi}\mathrm{dvol}_g-\int_M \tau^k v_{k,\phi} \omega\, (4\pi\tau)^{-n/2}e^{-\phi}\mathrm{dvol}_g.
\end{equation*}
From \cref{variational-integral}, we get
\begin{equation}
\label{variation-fphim}
\delta \mathcal{W}_{k,\phi}  =\int_M \tau^k \omega (\lambda v_{k-1,\phi} - v_{k,\phi})  \,(4\pi\tau)^{-n/2}e^{-\phi}\mathrm{dvol}_g. \end{equation}
Let $$\mathrm{WtdVol}:=\int_M (4\pi\tau)^{-n/2}e^{-\phi}\mathrm{dvol}_g$$ denote the weighted volume functional. Using the Lagrange multiplier method, we have for some $a>0$,
\begin{align*}
\delta\left[\mathcal{W}_{k,\phi} -a \mathrm{WtdVol}(M)\right](g,\phi)=0.\end{align*}
Hence, \begin{multline*}
\int_M \tau^k \omega (\lambda v_{k-1,\phi} - v_{k,\phi})  \,(4\pi\tau)^{-n/2}e^{-\phi}\mathrm{dvol}_g\\+ a\int_M \omega \,(4\pi\tau)^{-n/2}e^{-\phi}\mathrm{dvol}_g=0.
\end{multline*} 
We see that $\delta \mathcal{W}_{k,\phi}=0$ iff $ v_{k,\phi}-\lambda v_{k-1,\phi}=a\tau^{-k}$.
\end{proof}

We now show that gradient Ricci solitons are local extrema of $\mathcal{W}_{k,\phi} $ in $\mathcal{C}_1$. Note that the tangent space of $\mathcal{C}_1$ at $(g,\phi)$ is \begin{equation*}
\label{tangent-space-c1-definition}
    \mathcal{T}_{(g,\phi)}\mathcal{C}_1=\{\omega:\int_M \omega \,(4\pi\tau)^{-n/2}e^{-\phi}\mathrm{dvol}_g=0\}.
\end{equation*}
\begin{lemma}
Gradient Ricci solitons are critical points of $\mathcal{W}_{k,\phi} $ in $\mathcal{C}_1$.
\end{lemma}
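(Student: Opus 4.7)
My plan is to deduce the lemma as a short consequence of \cref{first-derivative-fphim} together with the explicit formula \cref{soliton-power-formula} for the renormalized volume coefficients of a gradient Ricci soliton. The key observation is that \cref{first-derivative-fphim} reduces the critical point condition to the algebraic equation $v_{k,\phi} - \lambda v_{k-1,\phi} = a$ for some constant $a \in \mathbb{R}$, so it suffices to verify that this equation holds on a gradient Ricci soliton for some real number $a$.

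First I would recall from \cref{soliton-power-formula} that on a gradient Ricci soliton one has $v_{k,\phi} = \tfrac{1}{k!}(v_{1,\phi})^k$ for every $k \geq 1$, and from \cref{gradient-soliton-g'phi'} that $v_{1,\phi} = -\tfrac{1}{2}(n\lambda - c)$, which is a real constant depending only on the soliton data. Setting $v_{0,\phi} := 1$ as in \cref{first-derivative-fphim}, we see that $v_{k,\phi}$ and $v_{k-1,\phi}$ are both constant on $M$, so their difference
\[
a := v_{k,\phi} - \lambda v_{k-1,\phi} = \frac{1}{k!}(v_{1,\phi})^k - \frac{\lambda}{(k-1)!}(v_{1,\phi})^{k-1}
\]
is a real constant.

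Second, I would simply apply \cref{first-derivative-fphim}: the condition \cref{v-delta} is satisfied with the constant $a$ above, and therefore a gradient Ricci soliton is a critical point of $\mathcal{W}_{k,\phi}$ in $\mathcal{C}_1$. I do not expect any genuine obstacle here; the only small point to check is that the $k=1$ case uses the convention $v_{0,\phi} = 1$, but this follows directly from \cref{volcoefficientsdef} applied to the normal ambient metric at $u=0$. The lemma is essentially a corollary of \cref{first-derivative-fphim} combined with the explicit form of the weighted ambient metric for solitons given by \cref{Theorem 1.2-1} of \cref{examples-theorem}.
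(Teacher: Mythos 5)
Your proposal is correct and takes essentially the same route as the paper: both deduce from \cref{soliton-power-formula} and \cref{gradient-soliton-g'phi'} that $v_{k,\phi}=\tfrac{1}{k!}\bigl[-\tfrac{1}{2}(n\lambda-c)\bigr]^{k}$ is constant for all $k\geq 0$, and then invoke the characterization of critical points in \cref{first-derivative-fphim}. Your write-up just spells out a step the paper leaves implicit (that the constancy of $v_{k,\phi}$ and $v_{k-1,\phi}$ separately makes $v_{k,\phi}-\lambda v_{k-1,\phi}$ constant, so \cref{v-delta} holds).
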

\begin{proof}
For shrinking gradient Ricci solitons, we have from \cref{gradient-soliton-g'phi',soliton-power-formula} that \[v_{k,\phi}=\frac{1}{k!}\left[-\frac{1}{2}(n\lambda - c)\right]^k\] for $k\geq 0$. The conclusion follows from \cref{first-derivative-fphim}.
\end{proof}

Now we write down the formula for $(\left.\mathcal{W}_{k,\phi} \right|_{\mathcal{C}_{1}})^{\prime \prime}(\omega)$. 
\begin{lemma}
Let $\man$ be a connected compact manifold with density and suppose $(g,\phi)$ is a critical point of $\mathcal{W}_{k,\phi}$. Let $\omega \in C^{\infty}(M)$ satisfy $\int_{M} \omega \,(4\pi\tau)^{-n/2}e^{-\phi}\mathrm{dvol}_g=0$. Then
\begin{multline}
\label{double-derivative}
(\left.\mathcal{W}_{k,\phi} \right|_{\mathcal{C}_{1}})^{\prime \prime}(\omega)=\int_M\Big[(\lambda v_{k-2,\phi}- v_{k-1,\phi})\lambda\omega^2\\ +\big[-\lambda (L_{k-1,\phi})^{ij}+
(L_{k,\phi})^{ij}\big]\nabla_i \omega\nabla_j \omega\Big]\, (4\pi\tau)^{-n/2}e^{-\phi}\mathrm{dvol}_g,    
\end{multline}
where $v_{-1,\phi}=0$.
\end{lemma}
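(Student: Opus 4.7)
The plan is to compute the constrained Hessian via a Lagrange-multiplier formulation and then differentiate the first-variation formula once more. Since $(g,\phi)$ is assumed critical, \cref{first-derivative-fphim} furnishes a constant $a\in\mathbb{R}$ with $v_{k,\phi}-\lambda v_{k-1,\phi}=a\tau^{-k}$. A standard Lagrange-multiplier identification says that at this critical point, $(\mathcal{W}_{k,\phi}|_{\mathcal{C}_1})''(\omega)$ agrees with the unconstrained second derivative of $\mathcal{W}_{k,\phi_t}-a\cdot\mathrm{WtdVol}(M,\phi_t)$ along the straight-line path $\phi_t=\phi+t\omega$. The tangential hypothesis $\int_M\omega\,d\mu=0$ places us in the right tangent space; the path $\phi_t$ need not remain in $\mathcal{C}_1$ to second order, but the Lagrange correction is designed precisely to absorb that defect.

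Differentiating \cref{variation-fphim} once more in $t$ at $t=0$ produces two contributions: one from the integrand, $\int_M\tau^k\omega[\lambda\dot v_{k-1,\phi}-\dot v_{k,\phi}]\,d\mu$, and one from the $e^{-\phi_t}$ factor in the measure, contributing $-\int_M\tau^k\omega^2(\lambda v_{k-1,\phi}-v_{k,\phi})\,d\mu$. At the critical point this second piece collapses to $a\int_M\omega^2\,d\mu$, while $\restr{\frac{d^2}{dt^2}}{t=0}\mathrm{WtdVol}(M,\phi_t)=\int_M\omega^2\,d\mu$; the Lagrange correction $-a\int_M\omega^2\,d\mu$ cancels it exactly, leaving $\int_M\tau^k\omega[\lambda\dot v_{k-1,\phi}-\dot v_{k,\phi}]\,d\mu$. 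Substituting \cref{v-conformal-variation} at orders $k$ and $k-1$ yields
\[
\lambda\dot v_{k-1,\phi}-\dot v_{k,\phi}=\lambda\omega(\lambda v_{k-2,\phi}-v_{k-1,\phi})+\nabla^*_i\bigl[(\lambda L_{k-1,\phi}-L_{k,\phi})^{ij}\nabla_j\omega\bigr].
\]
Multiplying by $\omega$ and integrating, and then using that $-\nabla^*$ is the formal adjoint of $\nabla$ against $e^{-\phi}\mathrm{dvol}_g$, converts the divergence term into the quadratic form $-\int_M(\lambda L_{k-1,\phi}-L_{k,\phi})^{ij}\nabla_i\omega\nabla_j\omega\,d\mu$, producing the stated formula.

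The hard part will be the bookkeeping at the critical point: verifying in detail that the Lagrange-multiplier correction exactly neutralizes the derivative of the weighted measure, and then keeping the sign conventions of the weighted divergence consistent so that the cross term $-\lambda L_{k-1,\phi}+L_{k,\phi}$ emerges with the correct sign in the final quadratic form. The boundary case $k=1$ also requires a small check that the convention $v_{-1,\phi}=0$ is compatible with $L_{0,\phi}=0$ via \cref{l-definition}, so that the formula degenerates cleanly and recovers the known second variation of Perelman's $\mathcal{W}$-functional.
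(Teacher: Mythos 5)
Your proposal is correct and follows the same Lagrange-multiplier route the paper sketches: equate $(\mathcal{W}_{k,\phi}|_{\mathcal{C}_1})''$ with $\partial_t^2$ of $\mathcal{W}_{k,\phi}-a\,\mathrm{WtdVol}$ at the critical point, differentiate \cref{variation-fphim} once more, substitute \cref{v-conformal-variation} at orders $k$ and $k-1$, and integrate by parts against the weighted measure. You have simply spelled out the bookkeeping that the paper's one-line proof leaves implicit (the cancellation of the measure-derivative term against the Lagrange correction, and the sign of the adjoint); note only that your computation carries an overall $\tau^k$ that appears to have been dropped by a typo in the paper's displayed formula \cref{double-derivative}.
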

\begin{proof}
Let $\gamma_t$ be a curve in $\mathcal{C}_1$ such that $\gamma_0=(g,\phi)$ and $\gamma^\prime_0=\omega$. Then 
\begin{equation*}
(\left.\mathcal{W}_{k,\phi} \right|_{\mathcal{C}_{1}})^{\prime \prime}(\omega)=\left.\partial_{t}^{2}\right|_{t=0}\left((\mathcal{W}_{k,\phi} )-a \mathrm{WtdVol}(M)\right)\left(g_{t},f_t\right).  \end{equation*}
On using \cref{variation-fphim} and integrating by parts, we get \cref{double-derivative}.
\end{proof}

\begin{proof}[Proof of \cref{second-derivative-theorem}]
Using \cref{l-definition} and that fact that $g^{ij}=g^{ij}_{(0)}$ for gradient Ricci solitons, we calculate that
\begin{equation}
\label{l-values}
\begin{aligned}
(L_{0,\phi})^{ij}&=\restr{ug_{(0)}^{ij}}{u=0}=0,\\
(L_{k,\phi})^{ij}&=\frac{1}{(k-1)!}\left[-\frac{1}{2}(n\lambda - c)\right]^{k-1}g^{ij}\text{ for }k\geq 1,
\end{aligned}  
\end{equation}
where $c$ is such that $\Delta\phi-|\nabla\phi|^2+2\lambda\phi=c$.
Now note that for a gradient soliton, we have $\lambda v_{-1,\phi}-v_{0,\phi}=-1$, and by \cref{first-derivative-fphim} we have $\lambda v_{k-1,\phi}-v_{k,\phi}=0$ for $k\geq 1$. Hence, using \cref{double-derivative,l-values}, we calculate that at $u=0$,
\begin{subequations}
\begin{align}
\begin{split}
\label{1phiinfty''}
(\mathcal{W}_{1,\phi})^{\prime\prime}(\omega)&=- \int_M (\lambda \omega^2-|\nabla\omega|^2) (4\pi\tau)^{-n/2}e^{-\phi}\mathrm{dvol}_g\geq 0,
\end{split}\\
\begin{split}
\label{2phiinfty''}
(\mathcal{W}_{2,\phi})^{\prime\prime}(\omega)&=b_2 \int_M (\lambda\omega^2-|\nabla \omega|^2)(4\pi\tau)^{-n/2}e^{-\phi}\mathrm{dvol}_g\leq 0.
\end{split}
\end{align}    
\end{subequations}
where $b_2=(\lambda+2^{-1}(n\lambda - c))>0$.
\cref{1phiinfty'',2phiinfty''} follow from \cite[Proposition (9.1)]{Case2014sd}.
In general, we have
\begin{equation}
\label{k-general-case}
(\mathcal{W}_{k,\phi})^{\prime\prime}(\omega)=b_k \int_M (\lambda\omega^2-|\nabla \omega|^2)(4\pi\tau)^{-n/2}e^{-\phi}\mathrm{dvol}_g    
\end{equation}
for $k\geq 2$, where 
\begin{equation*}
b_k=\frac{1}{(k-2)!}\left[-\frac{1}{2}(n\lambda - c)\right]^{k-2}\left[\lambda+\frac{1}{2(k-1)}(n\lambda - c)\right]. \end{equation*}
We see from \cref{1phiinfty'',k-general-case} that for all $k\in\mathbb{N}$, we have
\[(-1)^k (\mathcal{W}_{k,\phi})^{\prime\prime}(\omega)\leq 0\] for shrinking solitons. 
\end{proof}

\bibliographystyle{plain}
\bibliography{bib.bib}

\end{document}